\newtheorem{theorem}{Theorem}[section]
\newtheorem{corollary}[theorem]{Corollary}
\newtheorem{lemma}[theorem]{Lemma}
\newenvironment{proof}{{\em Proof.}}{\qquad$\Box$}
\newcommand\essinf{\textnormal{ess\,inf}}
\newcommand\esssup{\textnormal{ess\,sup}}
\newcommand{\asym}{\sim}
\newcommand\leb{\ensuremath{leb}}
\newcommand{\p}{\mathcal{P}}
\newcommand{\Z}{\mathbb{Z}}
\newcommand{\cone}{\mathcal{C}}
\newcommand\Deps{\ensuremath{\Delta^{\epsilon,-}}}
\newcommand\Depss{\ensuremath{\Delta^{\epsilon,+}}}
\newcommand\Depsj{\ensuremath{\Delta^{\epsilon,*}}}
\newcommand\Peps{{\Pi^\epsilon}}
\newcommand\Feps{{F^\epsilon}}
\newcommand\PFeps{{\p}_{\Feps}}
\newcommand\PTeps{\LTeps}
\newcommand\LTeps{{\mathcal{L}}_T^\epsilon}
\newcommand\LFeps{\mathcal{L}^\epsilon_\Feps}
\newcommand\ADeps{{\mathcal{A}_\Delta^\epsilon}}
\newcommand\Aeps{{\mathcal{A}^\epsilon}}
\newcommand\Leps{{\mathcal{L}^\epsilon}}
\newcommand\tLeps{{\tilde{\mathcal{L}}^\epsilon}}
\title{Spectral degeneracy and escape dynamics for intermittent maps with a hole}
\author[1]{Gary Froyland}
\author[2]{Rua Murray}
\author[1]{Ognjen Stancevic}
\affil[1]{School of Mathematics and Statistics, University of New South Wales, Sydney NSW 2052, Australia}
\affil[2]{Department of Mathematics and Statistics, University of Canterbury, Private Bag 4800, Christchurch 8140, New Zealand}
\begin{document}
\maketitle
\begin{abstract}
We study intermittent maps from the point of view of metastability. Small neighbourhoods of an intermittent fixed point and their complements form pairs of almost-invariant sets. Treating the small neighbourhood as a hole, we first show that the absolutely continuous conditional invariant measures (ACCIMs) converge to the ACIM as the length of the small neighbourhood shrinks to zero. We then quantify how the escape dynamics from these almost-invariant sets are connected with the second eigenfunctions of Perron-Frobenius (transfer) operators when a small perturbation is applied near the intermittent fixed point. In particular, we describe precisely the scaling of the second eigenvalue with the perturbation size, provide upper and lower bounds, and demonstrate $L^1$ convergence of the positive part of the second eigenfunction to the ACIM as the perturbation goes to zero. This perturbation and associated eigenvalue scalings and convergence results are all compatible with Ulam's method and provide a formal explanation for the numerical behaviour of Ulam's method in this
nonuniformly hyperbolic setting. The main results of the paper are illustrated with numerical computations.
\end{abstract}

\noindent{{\em Keywords:\/} Escape rate, Pomeau-Manneville maps, Perron-Frobenius operators, Ulam's method, Young towers}\newline

\noindent{{2010 MSC numbers:} 37E05, 37M25, 37D25}


\section{Introduction}

For maps $T:X\to X$ with good expansion properties, quantities such as the rate of decay of correlations and the rate of escape into a hole may be read off the spectrum of the corresponding Perron-Frobenius (or transfer) operator $\mathcal{P}:(\mathcal{B},\|\cdot\|)\circlearrowleft$ acting on a suitable Banach space $(\mathcal{B},\|\cdot\|)$ of regular observables.
The rate of decay of correlations is controlled by the gap in the spectrum of the Perron-Frobenius operator between the unit circle $\{z\in \mathbb{C}:|z|=1\}$ and the rest of the spectrum of $\mathcal{P}$.
Of particular interest for metastability is the situation where this gap is between the eigenvalue 1, with corresponding eigenfunction $\varphi$, the density of an absolutely continuous invariant measure (ACIM), and a second real eigenvalue $\lambda_2\in(0,1)$ with corresponding eigenfunction $\psi$ \citep{DJ99,DFS00}.

The notions of \emph{metastability} and \emph{almost-invariant sets} for closed systems are frequently explored via sub-unit eigenvalues of Perron-Frobenius operators, and have been used in a variety of real applications \citep{DS04,DJLM05,FPET07,FSM10}; also, extensions to random dynamical systems \citep{FLQ10,FLS10} have been developed recently.

If a small hole $H$ is excised from the domain $X$, as in early work of Pianigiani and Yorke~\citep{PY79}, one defines a conditional Perron-Frobenius operator $\mathcal{P}_{X\setminus H}(f):=\chi_{X\setminus H}\cdot\mathcal{P}(f\cdot\chi_{X\setminus H})$. If $T$ has good expansion properties, a \emph{conditional} ACIM (ACCIM) may exist \citep{PY79}, with density $\varphi_H$ satisfying $\mathcal{P}_{X\setminus H}\varphi_H=\lambda_H\varphi_H$, $0<\lambda_H\lesssim 1$. The density $\varphi_H$ describes the distribution of trajectories that have not yet fallen into the hole and the geometric rate at which trajectories of $T$ fall into $H$ is given by $\lambda_H$ \citep{PY79,DY06}.

In recent years there has been renewed interest in open systems and escape rates, stimulated in part by a series of papers by Collet~\emph{et al.}~\citep{CMS94, CMS96, CMS97, CMS99}  and, more recently, by a survey paper of Demers and Young \citep{DY06}. Much work has been carried out to show the existence of physically relevant conditionally invariant measures in various settings. First results \citep{PY79,CMS97} were obtained for maps that are Markov when the hole is removed. Shortly thereafter, similar results emerged for non-Markov expanding maps of the interval \citep{BC02, LiM03, Dem05b}, unimodal maps \citep{HY02, Dem05a}, Collet-Eckmann maps \citep{BDM10} and hyperbolic maps on $\mathbb{R}^{2}$ \citep{DL08}. For sufficient existence conditions in a general class of dynamical systems, see \citep{CMM00,CMM04}. The convergence of conditionally invariant measures to the invariant measure of the closed system as the hole closes has been investigated by Demers \emph{et al.} \citep{Dem05a,Dem05b, BDM10} using Young towers \citep{You99}. Keller and Liverani \citep{KL09} provide formulae for the variation of the escape rate with the size of small holes.

Another area of recent study is the \emph{connection} between metastability and escape rates. For example \citep{FS10} show that under fairly general conditions, the existence of a ``second eigenvalue'' $\lambda_2\in(0,1)$ allows one to find a set $A$ such that both the escape rate \emph{into} $A$ and \emph{from} A are slower than $-\log\lambda_2$. Specialising to uniformly expanding interval maps, Keller and Liverani~\citep{KL09} consider a $T$ with two invariant disjoint intervals perturbed by a stochastic kernel;  a formula is developed for $\lim_{\epsilon\to 0}(1-\lambda_{2,\epsilon})/\epsilon$ as the amplitude of the kernel shrinks. In \citep{HTW09} a similar setup is considered (with deterministic perturbations) and it is shown that the first and second eigenfunctions, $\varphi$ and $\psi$, of the transfer operator for a metastable system can be written asymptotically as a convex combination of $\varphi_{H}$ and $\varphi_{X\setminus H}$ as the metastability becomes complete stability (no communication). Dolgopyat and Wright~\citep{DW10} show that in the setting of~\citep{HTW09} with $m$ disjoint invariant intervals each supporting a mixing ACIM, one can construct an $m$-state Markov chain with state-to-state jump time distributions that precisely estimate the interval-to-interval jump time distributions of small perturbations of the original non-ergodic map.

So far, the use of Perron-Frobenius operators to study the rate of decay of correlations, metastability, the rate of escape, and the relationships between them has been applied exclusively to the standard test-bed of uniformly expanding interval maps or those that can be modelled by Young towers with exponentially decaying tails\footnote{For example, certain logistic maps~\citep{Dem05a}.}. In this work we move beyond uniform expansivity to study in detail the behaviour of these quantities for a \emph{nonuniformly expanding map} with a finite ACIM, the Pomeau-Manneville (PM) map $T:[0,1]\to[0,1]$ (see equation~(T) in Section~\ref{sec.pmacim}). The natural Young towers have polynomial, rather than exponential, tails and it is well known that the Perron-Frobenius operator lacks a spectral gap on the Banach spaces of functions of bounded variation or H\"older observables typically used in the expanding setting. The reason for the lack of exponential mixing is long escape times from small neighbourhoods
of the origin, due to the presence of an {\em indifferent fixed point\/} at $0$.

We link quantitatively almost-invariance (or metastability) of small neighbourhoods $[0,\epsilon_0]$ with mixing rates of a closed system and the escape rates out of the two open systems $T|_{[0,\epsilon_0]}$ and $T|_{[\epsilon_0,1]}$. Furthermore, we relate the second eigenfunction of the closed system and leading eigenfunctions (ACCIMs) of the open systems in the small hole limit. We connect these ideas with numerical approximations of the ACIMs and ACCIMs via Ulam's method~\citep{Ulam60}. In particular, a small amount of {\em averaging\/} in the neighbourhood of $0$ can induce a {\em spectral gap\/}, stabilizing the eigenvalue~$1$ (of $\p$). Although the form of averaging we introduce looks artificial the resulting analysis explains a phenomenon of {\em spectral gap scaling\/} which is evident when applying Ulam's approximation scheme~\citep{Ulam60} to the calculation of the ACIM for $T$. These results (Theorems \ref{th.4} and \ref{th.5}) may be viewed as intermittent counterparts to spectral stability results for uniformly expanding maps in \citep{BK98,KL99}.

We now give a brief indication of some of our main results.

\paragraph{Two open systems.}
First, we consider two open systems: the dynamics on a small neighbourhood of the origin $[0,\epsilon_0]$ (with trajectories leaving once they expand outside this small interval), and the dynamics on the rest of the domain $[\epsilon_0,1]$ (with trajectories leaving once they enter a small neighbourhood of the origin).
\begin{enumerate}
\item
	\emph{Dynamics on $[0,\epsilon_0]$:} There is no ACCIM for the open dynamics on $[0,\epsilon_0]$. However, a computation of escape rate with respect to a variety of natural probability measures with support on $[0,\epsilon_0]$ (which effectively smooth away the nonexpansion) yields\footnote{We use the following notation: $f(x) = \mathcal{O}(g(x))$ as $x\to a$ if and only if there exist positive real numbers $\delta$ and $M$ such that $|f(x)| \le M|g(x)|$ whenever $|x-a| < \delta$. We write $f(x)\asym g(x)$ if $f(x)=\mathcal{O}(g(x))$ and $g(x)=\mathcal{O}(f(x))$.} $1-\lambda_H \asym \epsilon_0^\alpha$ (see Section~\ref{sec.coarse}).
\item
\emph{Dynamics on $[\epsilon_0,1]$:} The open dynamics on $[\epsilon_0,1]$ does possess an ACCIM (Theorem~\ref{th.1}), which is unique in a certain set of regular measures (Corollary \ref{cor.1}) where the corresponding escape rate is $-\log\lambda_H \approx 1-\lambda_H \asym \epsilon_0$ (Corollary \ref{cor.2}), and is the same as for Lebesgue measure. Furthermore, we show that the ACCIM of this open dynamics converges in $L^1$ to the ACIM of the closed dynamics on $[0,1]$ as $\epsilon_0\to 0$ (Theorem~\ref{th.2}).
\end{enumerate}

\paragraph{The closed system.}
Second, in Section \ref{sec.metastable}, we show that gluing together the two open systems discussed above to form a simple two-state Markov chain predicts a mixing rate or ``second eigenvalue'' whose gap from $1$ scales like the slower of the above two escape rates, namely $\epsilon_0^\alpha$.

We show why this is indeed the case, by explicitly constructing a closed system which includes a smoothed expansion near the origin that mimics the effective smoothing experienced when computing the escape dynamics on $[0,\epsilon_0]$. We prove the existence of a second eigenfunction for the transfer operator of this system and upper and lower bounds for the second eigenvalue (Theorem~\ref{th.4}).

Our method of proof utilises a novel construction combining two Young towers;  one with positive mass and the other with negative mass. This construction elucidates the connection between the second eigenvalue of this closed system and the two escape rates of the two associated open systems, and between the structure of the second eigenfunction of the closed system and the ACCIM on $[\epsilon_0,1]$. Theorem~\ref{th.5} explicitly describes the convergence properties of the ACCIM on $[\epsilon_0,1]$ and the second eigenfunction when they are mapped down from the tower to the unit interval.

\paragraph{Application to numerics.}
A major motivation of the form of smoothing in the neighbourhood of the origin for the PM map is that our theory is applicable to Ulam's method of numerically approximating the Perron-Frobenius operator (sometimes called \emph{coarse-graining}).  Ulam's method is the unique Galerkin-projection method that can also be interpreted as a small random perturbation of the original dynamics (see Theorem 3.7 \citep{koltaithesis}).  In the uniformly hyperbolic setting there is a long history of results showing convergence of Ulam's method for ACIMs \citep{L76,Kel82,F95,DZ96,M98,M01}, and isolated spectral values and their eigenfunctions \citep{BIS95,F97,BK98,KL99}.  Applications of Ulam's method to open systems for the approximation of escape rates and ACCIMs include \citep{Bah06, BB10, BB10a}. Recent work \citep{Mur09} has shown that Ulam's method converges for certain nonuniformly expanding maps.

Ulam's method has been used extensively in a non-rigorous way to detect metastable and almost-invariant sets for nonuniformly hyperbolic, multidimensional systems.  The results obtained are typically very good, when compared with objective measures based on physics \citep{FPET07,FSM10}, even though the outer spectral values used can be considered spurious and due to discretisation effects.

We show that our theoretical results explain the behaviour of Ulam's method when applied to the PM map, including (i) the estimates of escape rate for the open dynamics on $[\epsilon_0,1]$, and (ii) the scaling of the ``spurious'' discretisation-induced second largest eigenvalue of the Ulam matrix for the closed system with the resolution of the projection elements. This present work is a first step in explaining the apparent success of Ulam's method on difficult real-world problems.

\paragraph{Outline of the paper.}
An outline of the paper is as follows. In Section~\ref{sec.pmacim} we describe the family of PM maps and known results concerning the ACIMs, and introduce conditional Perron-Frobenius operators and ACCIMs. In Section~\ref{sec.metastable} we introduce our two-state metastable Markov chain model, discuss the two open systems formed by partitioning [0,1] into $[0,\epsilon_0]$ and $[\epsilon_0,1]$ and state most of our main results. Section~\ref{sec.firsteigenfunctions} contains the Young tower setup and proof of the existence of an ACCIM on the larger domain. Section~\ref{sec.secondeigenfunctions} contains the dual positive-negative Young tower construction and proof of existence of a second eigenfunction for the closed system with bounds on the location of the second eigenvalue. Finally, Section~\ref{sec.numerics} describes Ulam's method or coarse-graining and presents numerical results illustrating the various scalings with hole size.


\section{Invariant measures for Pomeau-Manneville maps and conditionally invariant measures}\label{sec.pmacim}

Let \leb~ denote Lebesgue measure on $[0,1]$. Integrals with respect to \leb~ will sometimes be written $\int f d(\leb) = \int f\,dx$. Let $T:[0,1]\circlearrowleft$ be a Pomeau--Manneville map~\citep{PM80,LSV99} which near $0$ has the form
\begin{equation}\tag{T}
T(x)=x+c_\alpha\,x^{1+\alpha} + g(x)
\end{equation}
where $g$ is $C^2$ and the derivative $g'(x)=o(x^\alpha)$ (in conventional little-o notation\footnote{That is $\lim_{x\to 0}\frac{g'(x)}{x^{\alpha}} = 0$.}). Suppose also that $T$ has two branches, and let $x_0$ be the breakpoint such that $T$ is $1$--$1$ and onto $(0,1)$ on both $(0,x_0)$ and $(x_0,1)$.
We suppose also that $T$ is $C^2$ on both $(x_0,1)$ and $(\epsilon,x_0)$ for every $\epsilon>0$ and that $T'>1$ on both $(0,x_0)$ and $(x_0,1)$.

When $\alpha\in(0,1)$ these maps support a unique absolutely continuous invariant (probability) measure~$\mu$ (ACIM)~(obtainable from a first return map~\citep{Pia80}), the dynamics of $(T,\mu)$ is exact, and $T$ exhibits polynomial decay of correlations~\citep{You99} with rate $\mathcal{O}(k^{1-1/\alpha})$. The density~$\varphi=\frac{d\mu}{dx}$ has a singularity at $0$ (of power law type with exponent $-\alpha$), and arises as a fixed point of the Perron-Frobenius (transfer) operator $\p$ for $T$. The slow decay of correlations occurs because typical orbits of $T$ require anomalously long times to escape from the neighbourhood of $0$.

For small $\epsilon_0$ the set $[0,\epsilon_0)$ is {\em almost-invariant\/}~\citep{DJ99,FD03}. Often, almost-invariant sets are associated with isolated eigenvalues outside the essential spectrum of $\p$ \citep{DFS00}. However, for Pomeau-Manneville type maps\footnote{Indeed any expanding maps with indifferent periodic points.}, the eigenvalue $1$ corresponding to the invariant density is {\em not\/} isolated from the essential spectrum of~$\p$ on any reasonable subspace of $L^1$, leaving no room for isolated ``second'' eigenvalues. We will see in Sections~\ref{sec.secondeigenfunctions} and~\ref{sec.numerics} that the situation is different for certain {\em approximations\/} to $\p$.

\subsection*{Escape rates, conditionally invariant measures and conditional Perron--Frobenius operators}

We use the following standard concepts (see for example~\citep{PY79, DY06}). Let $(X,\nu)$ be a Borel space and let $T:X\rightarrow X$ be a measurable, non-singular transformation\footnote{That is, $\nu\circ T^{-1}$ is absolutely continuous with respect to $\nu$.}. For a measurable set $A$ let $A^{(k)}$ be the $k$-step survivor:
\begin{displaymath}
	A^{(k)} := \{ x\in A\ : \ T^i(x) \in A,\ i=1,\ldots,k \}.
\end{displaymath}
The \emph{escape rate} from $A$ (or into $H:= X\setminus A$), with respect to the measure $\nu$ is given by $-\log\lambda$ where $\log \lambda = \lim_{k\to\infty}\frac{\log\nu(A^{(k)})}{k}$. In absence of ambiguity we may refer to $\lambda$ itself as the geometric escape rate\footnote{$\lambda$ is usually called the \emph{eigenvalue} of $A$ for reasons that will become apparent in the next paragraph.}, or escape rate.

Measures that occur naturally in this setting are those measures, say $\nu_A$ , that satisfy
\begin{displaymath}
	\nu_A\circ (T|_{A})^{-1} = \lambda\nu_A.
\end{displaymath}
Note that $\nu_A$ need be defined only on $A$ but we may extend it to all of $X$. Then $\nu_A(A^{(k)}) = \lambda^k$ and the corresponding escape rate is $-\log \lambda$. These are called \emph{conditionally invariant measures}.

Recall that the \emph{Perron--Frobenius} operator for $T$ acts on integrable functions $\varphi\in L^1(X,\nu)$ according\footnote{Formally speaking, if $X=A\cup H$ (where $H$ is a hole) and $T:A\rightarrow X$, we have $\p_A:L^1(A,\nu|_A)\to L^1(X,\nu)$. The definition of transfer operators between different spaces is is easy: if $T:(Y,\mu)\to(X,\nu)$ is a non-singular transformation ($\mu\circ T^{-1}\ll\nu$), then $\p_T:L^1(Y,\mu)\to L^1(X,\nu)$ is the Frobenius--Perron operator defined by $\p_T\psi = \frac{d}{d\nu}((\psi\,\mu)\circ T^{-1})$} to
\begin{displaymath}
\p\varphi = \frac{d}{d\nu}((\varphi\,\nu)\circ T^{-1})
\end{displaymath}
(where $(\varphi\nu)(E) = \int\varphi\,\chi_E\,d\nu$ and $\frac{d}{d\nu}$ is the usual Radon-Nikodym derivative). Non-negative, normalised fixed points of $\p$ are the densities of {\em absolutely continuous invariant probability measures\/} (ACIMs). The {\em conditional Perron-Frobenius\/} operator (with respect to a measurable set~$A$) is defined by $\p_A\varphi = \chi_A\,\p(\varphi\,\chi_A)$. In much the same way, nonnegative normalised eigenfunctions of $\p_A$ are densities of {\em absolutely continuous conditionally invariant probability measures\/} (ACCIMs). That is if $\p_A\varphi = \lambda\varphi$ then $\nu_A := \varphi\nu$ is conditionally invariant. It is often convenient to define the {\em normalised conditional Perron-Frobenius\/} operator~$\tilde{\p}_A$ which acts according to
\begin{equation}
	\label{eq.normPF}
\tilde{\p}_A\varphi = \frac{\p_A\varphi}{\|\p_A\varphi\|_{L^1}}.
\end{equation}
Then nonnegative fixed points of $\tilde{\p}_A$ are densities of ACCIMs, while the denominator in \eqref{eq.normPF} gives the corresponding escape rate.


\section{Coarse graining, metastability, and a two-state model}
\label{sec.coarse}\label{sec.metastable}

As indicated above, the eigenvalue 1 is not isolated on any reasonable subspace of $L^1([0,1],\nu)$. Nevertheless, approximate, typically compact, versions of $\p$ derived from small random perturbations or numerical schemes possess a spectral gap. One of the main goals in this paper is to explain the variation of this gap with the size of perturbation, or resolution of numerical scheme. To this end, we fix $\epsilon_0\in(0,x_0)$ and partition $[0,1]=I_{\epsilon_0,1}\cup I_{\epsilon_0,2}$ where $I_{\epsilon_0,1}=[0,\epsilon_0]$ and $I_{\epsilon_0,2}=(\epsilon_0,1]$. For all formal results we assume that $\epsilon_0$ is a preimage of $x_0$ (the hole $[0,\epsilon_0]$ is Markov). We begin by summarising known facts and some main results for $T$ and small perturbations of $T$ on these two sets.

\paragraph{Mass of $I_{\epsilon_0,1}$ and $I_{\epsilon_0,2}$.}
The ACIM $\mu$ has $\mu([0,\epsilon_0])\asym \epsilon_0^{1-\alpha}$.

\paragraph{Conditionally invariant measures and rates of escape from $I_{\epsilon_0,j}, j=1,2$.}
We will show that
\begin{itemize}
	\item there exists an ACCIM $\mu_{\epsilon_0}$ on $I_{\epsilon_0,2}$, with eigenvalue $\lambda_{\epsilon_0}$ and density $\varphi_{\epsilon_0}$ bounded away from zero and infinity (shown in Theorem~\ref{th.1} but also a direct consequence of \citep[Theorem 2]{PY79});
	\item $-\log\lambda_{\epsilon_0}$, the rate of escape with respect to the ACCIM\footnote{\label{f.lebrate}And in fact any absolutely continuous measure whose density is bounded away from zero and infinity: Let $\phi\nu$ be such a measure. Then there exist positive constants $b,B$ where $b\varphi_{\epsilon_0} \le \phi \le B\varphi_{\epsilon_0}$ so that $\lim_{k\to\infty}1/k \log \phi\nu(I_{\epsilon_0,2}^{(k)}) \le \lim_{k\to\infty}1/k \log B\mu_{\epsilon_0} (I_{\epsilon_0,2}^{(k)}) = \log\lambda_{\epsilon_0}$. The reverse inequality is similar.} scales as $\epsilon_0$ (Corollary \ref{cor.2});
    \item as $\epsilon_0\to 0$, the ACCIM converges to the ACIM $\mu$ (Theorem~\ref{th.2}) in total variation norm, provided the passage is through a sequence of Markov holes; and
    \item the convergence of the ACCIM on $I_{\epsilon_0,2}$ to the ACIM can be modelled numerically via Ulam's method (Section \ref{sec.numerics}).
\end{itemize}
The situation for $I_{\epsilon_0,1}$ is more problematic as the only recurrent dynamics supported on $I_{\epsilon_0,1}$ is the fixed point at $0$, supporting an invariant $\delta$--measure $\delta_0$. There is no ``natural'' ACCIM\footnote{A natural (cf~\citep{DY06}) ACCIM~$\mu'$ will be one for which $(\tilde{\p}_{I_{\epsilon_0,1}})^k\chi_{I_{\epsilon_0,1}}\to\frac{d\mu'}{dx}$ as $k\to\infty$, where $\tilde{\p}_{I_{\epsilon_0,1}}$ is the normalised conditional Perron-Frobenius operator. Fix $\eta_0\in[0,\epsilon_0)$; define $\eta_k=I_{\epsilon_0,1}\cap T^{-1}\eta_{k-1}$ for each $k>0$ (and similarly for $\epsilon_k$). Then
\begin{displaymath}
\int_0^{\epsilon_0}\chi_{[0,\eta_0)}\,(\tilde{\p}_{I_{\epsilon_0,1}})^k\chi_{I_{\epsilon_0,1}}\,dx
=\frac{\int_0^{\epsilon_0}\chi_{[0,\eta_0)}\circ T^k\,dx}{\int_0^{\epsilon_k}\,dx}=\frac{\eta_k}{\epsilon_k}=1-\frac{\epsilon_k-\eta_k}{\epsilon_k}.
\end{displaymath}
Note that $\epsilon_k\to 0 $ as $k\rightarrow\infty$, $\{\epsilon_k-\epsilon_{k+1}\}$ is a decreasing sequence (since $T$ is expanding) and there is a constant $C$ such that $\epsilon_k-\epsilon_{k+1}\leq C\,\epsilon_k^{1+\alpha}$ for all $k$. Let $L$ be such that $\epsilon_L<\eta_0$. Then $\epsilon_{k+L}<\eta_k$ so
\begin{displaymath}
	\frac{\epsilon_k-\eta_k}{\epsilon_k}<\frac{1}{\epsilon_k}\,\sum_{l=1}^L\epsilon_{k+l-1}-\epsilon_{k+l}\leq \frac{1}{\epsilon_k}\,L\,(\epsilon_k-\epsilon_{k+1})
\leq \frac{1}{\epsilon_k}\,L\,C\,{\epsilon_k}^{1+\alpha}\to 0
\end{displaymath}
as $k\to\infty$. Hence, the measure with density ${\tilde{\p}_{I_{\epsilon_0,1}}}^k\chi_{I_{\epsilon_0,1}}$ converges weakly to $\delta_0$.}.
Moreover, $\leb\circ T^{-k}(I_{\epsilon_0,1})=\mathcal{O}(\min\{\epsilon_0,k^{-1/\alpha}\})$ and\footnote{These facts follow because (in the notation of the previous footnote) there is a constant $c$ such that $\frac{1}{c}\,k^{-1/\alpha}\leq \epsilon_k\leq c\,k^{-1/\alpha}$ and $\frac{d\mu}{dx}\asym x^{-\alpha}$.}
$\mu\circ T^{-k}I_{\epsilon_0,1}=\mathcal{O}(k^{1-1/\alpha})$. Since the escape from $I_{\epsilon_0,1}$ is {\em asymptotically polynomial}, there is no conditionally invariant measure which defines a ``natural'' rate of geometric escape from~$I_{\epsilon_0,1}$.

\paragraph{``Second eigenvalue''.} As pointed out in Section \ref{sec.pmacim}, $\mathcal{P}$ has no well defined ``second eigenvalue''.  We show that
\begin{itemize}
	\item adding a random perturbation on $I_{\epsilon_0,1}$ which averages according to the push-forward of a uniform density on $I_{\epsilon_0,2}\cap T^{-1}I_{\epsilon_0,1}$ induces a second eigenvalue $\lambda_2$ with scaling $1-\lambda_2 \asym \epsilon_0^\alpha$ (Theorems \ref{th.4} and \ref{th.5}); and
	\item coarse-graining the map $T$ by Ulam discretisation with grain size $\epsilon_0$ produces a second eigenvalue $\lambda_2$ with $1-\lambda_2 \asym \epsilon_0^\alpha$ (see Section \ref{sec.numerics}).
\end{itemize}

\subsection*{A two-state metastable model}
Our first approximate version of the Markov operator $\p$ is a crude two-state Markov chain approximation, which nevertheless turns out to be an accurate descriptor of the important dynamics. For any probability measure~$m$ one can construct a $2$-state Markov chain with transition matrix
\begin{displaymath}
	P_{\epsilon_0,m}=\begin{pmatrix}
  1-a_{\epsilon_0} & a_{\epsilon_0} \\
  b_{\epsilon_0} & 1-b_{\epsilon_0} \\
\end{pmatrix},
\end{displaymath}
where $(P_{\epsilon_0,m})_{ij} = \frac{m(I_{\epsilon_0,i}\cap T^{-1}I_{\epsilon_0,j})}{m(I_{\epsilon_0,i})}$. This Markov chain describes the movement between a small neighbourhood of 0 and the rest of the interval. The invariant probability vector is $(\frac{b_{\epsilon_0}}{a_{\epsilon_0}+b_{\epsilon_0}},\frac{a_{\epsilon_0}}{a_{\epsilon_0}+b_{\epsilon_0}})$.

We can view this two-state model in two ways. First, the numbers $1-a_{\epsilon_0}, 1-b_{\epsilon_0}$ are what are sometimes called \emph{almost-invariance ratios} \citep{FD03,F05} and may be interpreted as geometric \emph{escape rates} from the two ``open'' states. Second, as a coarse-grained ``closed'' system, the eigenvalues of $P_{\epsilon_0,m}$ are $1$ and $1-a_{\epsilon_0}-b_{\epsilon_0}$, and so the scaling of the rate of mixing as $\epsilon_0\to 0$ is determined by whichever of $a_{\epsilon_0},b_{\epsilon_0}$ is approaching $0$ most slowly. We now check how well this two-state model reflects the statistics of $T$.

The numbers $a_{\epsilon_0}, b_{\epsilon_0}$ are determined by the choice of measure~$m$ and there are several natural choices:
\begin{enumerate}
\item $m=\leb$ --- this choice requires no special knowledge of the dynamics of $T$ and
corresponds to Ulam's method (discussed below). One has $a_{\epsilon_0} = \frac{\epsilon_0-T^{-1}\epsilon_0}{\epsilon_0}\asym \epsilon_0^\alpha$ and
$\lim_{\epsilon_0\to0}\frac{b_{\epsilon_0}}{\epsilon_0}=\lim_{x\to x_0^+}\frac{1}{T'(x)}$.
\item $m=\mu$ --- although no closed formula is available, the general structure of the invariant density is well understood, and $a_{\epsilon_0}, b_{\epsilon_0}$ scale as when $m=\leb$\footnote{The density $\frac{d\mu}{dx}\asym x^{-\alpha}$
so $a_{\epsilon_0}\asym\frac{\int_{\epsilon_0-c_\alpha\epsilon_0^{1+\alpha}}^{\epsilon_0} x^{-\alpha}\,dx}{\int_0^{\epsilon_0} x^{-\alpha}\,dx}\asym \epsilon_0^{\alpha}$ and $b_{\epsilon_0}\asym \epsilon_0$.}.
\item $m$ is some combination of {\em conditionally invariant measures\/} on each $I_{\epsilon_0,j}$ --- as discussed above, while an ACCIM does exist on  $I_{\epsilon_0,2}$, no ACCIM exists for $I_{\epsilon_0,1}$.
\end{enumerate}
Let us concentrate on options 1 and 2 above, for which we obtain well defined scalings for $a_{\epsilon_0}$ and $b_{\epsilon_0}$.

\paragraph{Mass of $I_{\epsilon_0,1}$ and $I_{\epsilon_0,2}$.}
The mass given to the first state by our two-state model is $\frac{b_{\epsilon_0}}{a_{\epsilon_0}+b_{\epsilon_0}}\asym \epsilon_0^{1-\alpha}$ and matches the ACIM scaling of $\mu([0,\epsilon_0])$.

\paragraph{Rates of escape from $I_{\epsilon_0,j}, j=1,2$.}
The rate of escape from the second state is $-\log(1-b_{\epsilon_0})\approx b_{\epsilon_0} \asym \epsilon_0$ and matches the rate of escape from $I_{\epsilon_0,2}$ with respect to the ACCIM. The rate of escape from the first state is $-\log(1-a_{\epsilon_0})\approx a_{\epsilon_0}\asym \epsilon_0^\alpha$. This is effectively the rate experienced for the map $T$ if the mass distribution on $I_{\epsilon_0,1}$ were reinitialised to $\mu$ or $m$ at each iteration of $T$.

\paragraph{Second eigenvalue of $P_{\epsilon_0,m}$.}
The second eigenvalue is $1-a_{\epsilon_0}-b_{\epsilon_0}$. Since $a_{\epsilon_0}+ b_{\epsilon_0} \asym \epsilon_0^\alpha$, it matches the scaling of the second eigenvalue of the perturbed operator.
\vspace{.5cm}

Thus, this two-state Markov model captures well (i) the relative mass of $I_{\epsilon_0,1}$ and $I_{\epsilon_0,2}$, (ii) provides escape rates from the two states consistent with true or perturbed escape rates for $T$, and (iii) captures well the mixing rate of a perturbed version of $T$. These properties, expressed very clearly with only two states, will carry across to matrices arising from Ulam approximations of $\p$.


\section{Towers, ACIMs and ACCIMs}\label{sec.firsteigenfunctions}

We study the open and closed dynamics of $T$ via a Young tower of returns to an interval away from the indifferent fixed point at $0$. For the ACIM, the construction is standard. For the ACCIM, we puncture the tower for the closed dynamics, and look for a fixed point of the conditional Perron-Frobenius operator on the open tower. Because of the normalisation in the operator, the fixed point must have growing mass concentration as height increases up the tower; some effort is needed to control this growth.

\subsection{Tower set-up and notation}\label{sec.1}

Recall that $T:[0,1]\circlearrowleft$ satisfies (T) and has two, onto $1$--$1$ branches, with a discontinuity at $x_0$. Set $\Delta_0=[x_0,1]$ and let $\Delta$ be constructed as the tower of first returns to $\Delta_0$. That is, let $R(x) = \min\{n>0~:~T^n(x)\in(x_0,1)\}$ and the tower has height $R(x)-1$. Let $\Delta_{0,i}=\{x\in\Delta_0~:~R(x)=i\}$. Then $\Delta=\{(x,\ell)\in\Delta_0\times\Z^+~:~ \ell<R(x)\}$ and the base of the tower $\Delta_0$ is partitioned as $\{\Delta_{0,i}\}_{i=1}^{\infty}$. Because $\{[0,x_0],[x_0,1]\}$ is a Markov partition for $T$, the first return to $\Delta_0$ from the top levels of the tower completely covers $\Delta_0$. This fact is used in our arguments below. The upper levels of the tower are $\Delta_\ell$ partitioned as $\{\Delta_{\ell,i}\}_{i=\ell+1}^{\infty}$. The tower is equipped with a natural measure~$\nu$ (e.g. Lebesgue on $\Delta_0$ lifted by upwards translation). Since $R$ is integrable with respect to $\nu|_{\Delta_0}$, $\nu$ is a finite measure. The tower map is $F: \Delta \circlearrowleft$ with $F(x,\ell)=(x,\ell+1)$ if $\ell<R(x)-1$ and $F(x,R(x))=(T^R(x),0)$ with $F:(\Delta_{0,i}\times\{R(i)\})\rightarrow \Delta_0$ being injective and onto. Assume that $F$ is non-singular and satisfies a variant of the usual regularity condition for $x,y$ on the same level of the tower
\begin{equation}\tag{JF}
\left|\frac{JF(x)}{JF(y)}\right| \leq e^{c\,\beta^{s\circ F(x,y)}}
\end{equation}
where $JF=\frac{d(\nu\circ F)}{d\nu}$ is the Jacobian derivative of $F$, $\beta\in(0,1)$, $c<\infty$ and $s$ is the usual separation time. That is, $s(x,y)$ counts the number of returns to $\Delta_0$ before $x$ and $y$ are in different elements of the partition of $\Delta_0$.

\subsubsection{Truncation and escape from the tower}

Next, for each~$n$ we impose a hole in the tower $H_n:=\cup_{\ell\ge 1, i \ge n+1} \Delta_{\ell,i}$; that is, $H_n$ consists of all elements directly above $H_n^1:=\cup_{i \ge n+1} \Delta_{0,i}$. Note that the highest level of $\Delta\setminus H_n$ is $n-1$ and
\begin{equation}\label{eq.useful1}
H_n^1=(\Delta\setminus H_n) \cap F^{-1}H_n,
\end{equation}
that is $H_n^1$ consists of all the points that fall into the hole in exactly one iteration of the map $F$ (see Figure \ref{fig.twotower}(a)).
If $\p: L^1(\Delta) \circlearrowleft$ is the usual (Perron-Frobenius) transfer operator on the tower
then
\begin{displaymath}
	\p\varphi(x) = \sum_{y\in F^{-1}x}\frac{\varphi(y)}{|JF(y)|}, \quad \text{for } \varphi\in L^1(\Delta)
\end{displaymath}
and for each hole $H_n$ the conditional operator $\p_n : L^1(\Delta) \circlearrowleft$ is given by
\begin{displaymath}
	\p_n\varphi (x) :=\chi_{\Delta\setminus H_n}(x)\,\p(\varphi\cdot \chi_{\Delta\setminus H_n})(x) =
		\chi_{\Delta\setminus H_n}(x)\,\sum_{y\in F^{-1}x\setminus H_n}\frac{\varphi(y)}{|JF(y)|}.
\end{displaymath}

\subsubsection{Distribution of $R$ on the tower}

The distribution of $R$ on $\Delta_0$ is determined by the exponent $\alpha$. Define the sequence $\{x_n\}$ recursively by $x_n=(0,x_0)\cap T^{-1}x_{n-1}$. Standard computations give $x_n \asym n^{-1/\alpha}$ and $\leb(x_{n+1},x_n)\asym n^{-1-1/\alpha}$. Thus $R(x)=i$ precisely when $T(x)\in(x_{i-1},x_{i-2})$ so that $\leb\{x~:~R(x)=i\}=\mathcal{O}(i^{-1-1/\alpha})$ and the tail of the return time distribution is
\begin{displaymath}
\sum_{i\ge n+1} \leb\{R\geq i\}\asym \sum_{i\ge n+1} i^{-1/\alpha} \asym n^{1-1/\alpha}
\end{displaymath}
(giving polynomial decay of correlations with rate $\mathcal{O}(n^{1-1/\alpha})$ by~\citep{You99}) and
\begin{equation}\label{e.sizehole}
	\nu(H_n^1) \asym n^{-1/\alpha}.
\end{equation}

\subsubsection{Satisfaction of (JF) for a tower modelled on $T$}

Let $T$ be a Pomeau--Manneville map satisfying (T) and let $\tau_0(x)=\min\{n>0~:~T^n(x)\in [x_0,1]\}$
denote the first passage/return time to $\Delta_0$. In order to choose $\beta$ such that (JF) is satisfied, note that standard estimates (see for example~\citep{You99}) give a constant $c_0$ such that
\begin{displaymath}
\log\left|\frac{JT^{\tau_0}(x)}{JT^{\tau_0}(y)}\right| \leq c_0\,|T^{\tau_0}(x)-T^{\tau_0}(y)|
\end{displaymath}
when $x,y$ are in the same $1$-$1$ branch of $T^{\tau_0}$. Choosing $\beta<1$ large enough that $|JT(x)|\,\beta >1$ for all $x\in[x_1,1]$ ensures that $|J({T}^{\tau_0})(x)|>\beta^{-1}$ for almost every $x\in (0,1]$. Hence, distances between points are expanded by at least $\beta^{-1}$ on every visit to $\Delta_0$. If $s(x,y)=n$ then $x,y$ lie in the same $1$--$1$ branch of $(T^{\tau_0})^n$ so
\begin{displaymath}
|T^{\tau_0}(x)-T^{\tau_0}(y)|\leq \beta^{n-1}\,|(T^{\tau_0})^n(x)-(T^{\tau_0})^n(y)|\leq \beta^{s\circ F(x,y)}\,\nu(\Delta_0)
\end{displaymath}
and (JF) follows.

\subsection{Existence and uniqueness of ACCIM}\label{sec.E1accim}

In this section we prove the existence and uniqueness of an absolutely continuous conditionally invariant probability measure of $F: \Delta\circlearrowleft$ with hole $H_n$. It should be noted that Theorems 1 and 2 by Pianigiani and Yorke \citep{PY79} may be directly applied in our setting to show these claims. Nonetheless, we duplicate the analogous results on a suitable tower. As well as preparing for the sections that follow, obtaining explicit, uniform bounds on the density of the ACCIMs (independently of hole size) is necessary for our estimates of escape rates, and for showing convergence of conditionally invariant measures to the invariant measure as the hole closes\footnote{The ACCIMs we construct have uniformly bounded densities on $\Delta$, but not when projected back to the interval $[0,1]$.}.

For a fixed constant $C>0$ let $\cone_*$ be a set of \emph{regular} functions in $L^1(\Delta, \nu)$ defined as
\begin{displaymath}
	\cone_* := \left\{ \varphi\in L^1(\Delta,\nu) \ :\ \varphi\ge 0, \ \varphi(x) \le \varphi(y)e^{C\beta^{s(x,y)}} \text{ for a.e. comparable } x,y \right\}.
\end{displaymath}
Above, $x,y\in\Delta$ are considered \emph{comparable} if either they are both in the same cell of the partition $\Delta_{\ell,i}$, or if they are both in $\Delta_0$.

Furthermore for every $n\in\mathbb{Z}^+$ let $\cone_n \subseteq \cone_*$ be a family of regular densities on $\Delta\setminus H_n$, that is
\begin{displaymath}
	\cone_n := \left\{ \varphi\in \cone_*\ : \ \int_{\Delta\setminus H_n} \varphi \ d\nu = 1,\quad \varphi|_{H_n}=0\right\}.
\end{displaymath}
\begin{lemma}\label{lem.1}
	For each $B>0$ the set $\cone_{*}^{B} :=  \{\varphi\in \cone_* \ : \ \|\varphi\|_{L^\infty} \le B\}$ is compact in $L^1(\Delta,\nu)$. In addition for each $n$ there is a $B = B(n)$ such that $\cone_n\subseteq \cone_{*}^{B}$ so that each $\cone_n$ is also compact.
\end{lemma}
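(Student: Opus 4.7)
My plan is to prove $L^1$-compactness of $\cone_*^B$ via closedness plus total boundedness, and then bound each $\varphi\in\cone_n$ cell-by-cell in $L^\infty$ to embed $\cone_n$ into $\cone_*^{B(n)}$.

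Closedness of $\cone_*^B$ is straightforward: given $\varphi_k\in\cone_*^B$ with $\varphi_k\to\varphi$ in $L^1$, extract a $\nu$-a.e.\ convergent subsequence; the conditions $\varphi\ge 0$, $\|\varphi\|_\infty\le B$, and $\varphi(x)\le\varphi(y)e^{C\beta^{s(x,y)}}$ for a.e.\ comparable $x,y$ each pass to pointwise limits.

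The main work is total boundedness, for which I would combine averaging over a separation-time partition with tail truncation enabled by $\nu(\Delta)<\infty$ (which holds because $R$ is $\nu|_{\Delta_0}$-integrable). Let $\mathcal{Q}_k$ denote the partition of $\Delta$ refining the comparability classes $\{\Delta_0\}\cup\{\Delta_{\ell,i}:\ell\ge 1\}$, in which two comparable points lie in the same atom iff $s(x,y)\ge k$; transitivity of the relation ``$s\ge k$'' on each comparability class makes this well-defined. The regularity condition bounds $\log\varphi$ oscillation by $C\beta^k$ on each atom, giving
\[
\|\varphi-\mathbb{E}[\varphi\mid\mathcal{Q}_k]\|_{L^1}\le B\,(e^{C\beta^k}-1)\,\nu(\Delta)
\]
uniformly in $\varphi\in\cone_*^B$. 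For any $\epsilon>0$ pick $k$ so this error is $<\epsilon/3$; then since $\sum_{Q\in\mathcal{Q}_k}\nu(Q)=\nu(\Delta)<\infty$, pick a finite subfamily $\mathcal{J}\subset\mathcal{Q}_k$ with $\nu(\Delta\setminus\bigcup\mathcal{J})<\epsilon/(3B)$. Each $\varphi\in\cone_*^B$ is then within $2\epsilon/3$ of the step function $\mathbb{E}[\varphi\mid\mathcal{Q}_k]\,\chi_{\bigcup\mathcal{J}}$, and these step functions form a bounded subset of the finite-dimensional space of $\mathcal{Q}_k$-measurable functions on $\bigcup\mathcal{J}$ with values in $[0,B]$, which admits a finite $\epsilon/3$-net in $L^1$. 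This yields a finite $\epsilon$-net for $\cone_*^B$, completing total boundedness.

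For the second statement, the key observation is that all pairs in $\Delta_0$ are comparable, so the regularity condition together with $\int_{\Delta\setminus H_n}\varphi\,d\nu=1$ gives $\sup_{\Delta_0}\varphi\le e^C\inf_{\Delta_0}\varphi\le e^C/\nu(\Delta_0)$; the same reasoning on each upper cell $\Delta_{\ell,i}\subset\Delta\setminus H_n$ gives $\sup_{\Delta_{\ell,i}}\varphi\le e^C/\nu(\Delta_{\ell,i})$. Using $\nu(\Delta_{\ell,i})=\nu(\Delta_{0,i})\asym i^{-1-1/\alpha}$ from Section~\ref{sec.1}, the minimum cell measure in $\Delta\setminus H_n$ (whose upper cells have $i\le n$) is $\asym n^{-1-1/\alpha}$, so $B(n)=e^C\cdot\mathcal{O}(n^{1+1/\alpha})$ suffices. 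The unit-mass and vanishing-on-$H_n$ constraints are both $L^1$-closed, so $\cone_n$ is a closed subset of the compact $\cone_*^{B(n)}$, hence itself compact. The main technical point is the averaging step: building a genuinely finite $\epsilon$-net requires both the uniform $L^\infty$-contraction of $\varphi-\mathbb{E}[\varphi\mid\mathcal{Q}_k]$ from the cone condition and the finiteness of $\nu$, which restricts attention to finitely many atoms of $\mathcal{Q}_k$.
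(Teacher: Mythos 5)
Your proof is correct, but it reaches the compactness of $\cone_*^B$ by a different mechanism than the paper. The paper works with the metric $d_\beta(x,y)=\beta^{s(x,y)}$, observes that the cone condition plus the uniform bound $B$ make $\cone_*^B$ equicontinuous on the separable space $(\Delta,d_\beta)$, extracts a pointwise convergent subsequence by an Arzel\`a--Ascoli-type diagonal argument, and upgrades to $L^1$ convergence by dominated convergence using $\nu(\Delta)<\infty$; i.e.\ it proves sequential compactness directly. You instead prove closedness plus total boundedness: conditional expectation onto the depth-$k$ cylinder partition $\mathcal{Q}_k$ (well defined because ``$s\ge k$'' is an equivalence relation on each comparability class) approximates every $\varphi\in\cone_*^B$ to within $B(e^{C\beta^k}-1)\nu(\Delta)$ in $L^1$, a tail truncation uses $\nu(\Delta)<\infty$, and the remaining step functions live in a bounded subset of a finite-dimensional space, giving an explicit finite $\epsilon$-net. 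Both routes rest on exactly the same two structural inputs (oscillation control from the cone condition at separation scale $k$, and finiteness of $\nu$), but yours is more quantitative (it produces nets rather than subsequences) while the paper's is shorter and avoids the bookkeeping with partitions. For the second assertion your argument is essentially the paper's: the integral mean value/infimum bound on $\Delta_0$ and on each cell $\Delta_{\ell,i}$ with $i\le n$ gives $\esssup\varphi\le e^C/\nu(\text{cell})$, and $B(n)$ is the maximum of finitely many such bounds (the asymptotics $\nu(\Delta_{0,i})\asym i^{-1-1/\alpha}$ you quote are not needed, only positivity of the finitely many cell measures). One small point in your favour: you note explicitly that the normalisation and vanishing-on-$H_n$ constraints are $L^1$-closed, so $\cone_n$ is a closed subset of the compact set $\cone_*^{B(n)}$; the paper leaves this closedness implicit when concluding that $\cone_n$ is compact.
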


\begin{proof}
Define a metric $d_\beta$ on $\Delta$ by $d_\beta(x,y) = \beta^{s(x,y)}$ (for non-comparable $x$ and $y$ set $s(x,y)=0$). For a given $\varphi\in\cone_{*}^{B}$ and any $\epsilon > 0$ choose $\delta = \min(\beta, C^{-1}\log(1+\epsilon/B))$. Take any $x,y\in \Delta$ such that $d_\beta(x,y) < \delta$. As $\delta \le \beta$ we have $s(x,y) \ge 1$ so $x$ and $y$ are in the same cell of the partition of $\Delta$. Then
\begin{displaymath}
		|\varphi(x) - \varphi(y)|  \le \varphi(x)|1-e^{C\beta^{s(x,y)}}|
		\le B(e^{C\beta^{s(x,y)}} - 1)
		\le B(e^{C\delta} - 1) \le \epsilon.
\end{displaymath}
	Thus $\cone_*^{B}$ is equicontinuous. Let $\{\varphi_n\}$ be a sequence in $\cone_*^B$. 
There is no loss of generality in assuming that each $|\varphi_n|\leq B$. Since the metric space $(\Delta,d_\beta)$ is separable, a diagonalisation argument similar to the Arzela-Ascoli theorem establishes the existence of a pointwise convergent subsequence $\{\varphi_{n_j}\}$. Clearly the pointwise limit also belongs to $\cone_*^B$. Since $\nu(\Delta)<\infty$, Lebesgue's dominated convergence theorem implies that $\{\varphi_{n_j}\}$ converges in $L^1(\Delta,\nu)$. This establishes compactness.

	Now consider $\varphi\in\cone_n$. For any  $\Delta_{\ell,i}$, $1\le \ell < i\le n$ by the integral mean value theorem there exists $x^*\in\Delta_{\ell,i}$ such that $\varphi(x^*) = \int_{\Delta_{\ell,i}} \varphi\ d\nu / \nu(\Delta_{\ell,i})$, so
	\begin{displaymath}
		\esssup_{\Delta_{\ell,i}} \varphi \le e^{C\beta}\,\varphi(x^*) = \frac{e^{C\beta}}{\nu(\Delta_{\ell,i})}\int_{\Delta_{\ell,i}}\varphi \ d\nu \le \frac{e^{C\beta}}{\nu(\Delta_{\ell,i})}.
	\end{displaymath}
	Similarly on the base of the tower we obtain $\esssup_{\Delta_0} \varphi \le \frac{e^C}{\nu(\Delta_0)}$. If we choose
	\begin{displaymath}
		B = B(n):= e^C \max\left(\frac{1}{\nu(\Delta_0)}, \max_{1\le\ell<i\le n} \frac{1}{\nu(\Delta_{\ell,i})}\right),
	\end{displaymath}
	then $\cone_n \subseteq \cone_{*}^{B}$ hence $\cone_n$ is also compact.
\end{proof}

\begin{theorem}\label{th.1}
	Let $C\geq c/(1-\beta)$. For each $n\in\mathbb{Z}^{+}$ the normalised conditional operator $\tilde\p_n$ admits a fixed point in $\cone_n$.
\end{theorem}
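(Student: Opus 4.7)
The plan is to apply the Schauder--Tychonoff fixed point theorem to $\tilde{\p}_n$ restricted to $\cone_n$. Compactness of $\cone_n$ is Lemma~\ref{lem.1}; convexity is immediate from the linearity of the defining constraints; nonemptiness follows since the constant density $\chi_{\Delta\setminus H_n}/\nu(\Delta\setminus H_n)$ lies in $\cone_n$ (it is constant on every pair of comparable points, so the inequality $\varphi(x)\le\varphi(y)e^{C\beta^{s(x,y)}}$ is trivial). It then suffices to establish (a) $\tilde{\p}_n(\cone_n)\subseteq\cone_n$ and (b) continuity of $\tilde{\p}_n$ on $\cone_n$.

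For (a), positivity, vanishing on $H_n$, and unit $L^1$ mass are built into $\tilde{\p}_n$. For the regularity inequality $\tilde{\p}_n\varphi(x)\le\tilde{\p}_n\varphi(y)e^{C\beta^{s(x,y)}}$ applied to comparable $x,y\in\Delta\setminus H_n$, I would split on location. If $x,y\in\Delta_{\ell,i}$ with $\ell\ge 1$, the unique preimages $x^*,y^*\in\Delta_{\ell-1,i}$ have unit Jacobian and $s(x^*,y^*)=s(x,y)$, so the cone regularity of $\varphi$ passes straight through. If $x,y\in\Delta_0$, preimages outside $H_n$ pair up as $x_i^*,y_i^*\in\Delta_{i-1,i}$ for $1\le i\le n$, with $s(x_i^*,y_i^*)=1+s(x,y)$, so the regularity of $\varphi$ contributes a factor $e^{C\beta\cdot\beta^{s(x,y)}}$, while (JF) contributes $e^{c\beta^{s(x,y)}}$; each summand comparing $\p_n\varphi(x)$ to $\p_n\varphi(y)$ is therefore bounded by $e^{(C\beta+c)\beta^{s(x,y)}}$, and the hypothesis $C\ge c/(1-\beta)$ collapses this back to $e^{C\beta^{s(x,y)}}$. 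Normalisation by $\|\p_n\varphi\|_{L^1}$ then preserves the inequality.

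For (b), $\p_n$ is a contraction on $L^1$ and hence continuous, so it remains to show that $\varphi\mapsto\|\p_n\varphi\|_{L^1}=1-\int_{H_n^1}\varphi\,d\nu$ is bounded away from zero on $\cone_n$. If this functional vanished at some $\varphi\in\cone_n$, then the mass of $\varphi$ would be concentrated on $H_n^1\subsetneq\Delta_0$, forcing $\varphi\equiv 0$ on the nonempty set $\Delta_0\setminus H_n^1$; but since all points of $\Delta_0$ are comparable, the cone regularity then forces $\varphi$ to vanish throughout $\Delta_0$, contradicting $\int\varphi\,d\nu=1$. Continuity and compactness of $\cone_n$ yield a uniform positive lower bound, making $\tilde{\p}_n$ continuous, and Schauder--Tychonoff delivers the fixed point. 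The main obstacle is the $\Delta_0$ case of the regularity step: this is precisely where the hypothesis $C\ge c/(1-\beta)$ is forced, absorbing the (JF) distortion loss $e^{c\beta^{s(x,y)}}$ into the one-step separation-time gain obtained from a return to the base.
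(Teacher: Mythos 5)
Your proposal is correct and follows essentially the same route as the paper: cone invariance via the same split into upper levels (unit Jacobian, invariant separation time) and the base (pairing the $n$ top-of-column preimages, gaining a factor $\beta$ in separation time and absorbing the (JF) distortion through $C\beta+c\le C$), followed by Schauder using the compactness of $\cone_n$ from Lemma~\ref{lem.1}. The only minor difference is that the paper bounds $\|\p_n\varphi\|_{L^1}$ away from zero explicitly via the integral mean value theorem, whereas you obtain the same conclusion softly from positivity of the (continuous) normalising functional on the compact set $\cone_n$; both are valid.
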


\begin{proof}
	Note that $C\beta + c \le C$. First, we will show that $\tilde\p_n\cone_n \subseteq \cone_n$ from which a standard fixed point argument will follow. Let $\varphi\in\cone_n$; it suffices to show that $\p_n\varphi\in\cone_*$. Now let $(z,\ell),(w,\ell)\in\Delta_{\ell,i}$ where $1\le \ell < i \le n$ so that both $(z,\ell)$ and $(w,\ell)$ have only one pre-image of $F$, namely $(z,\ell-1)$ and $(w,\ell-1)$. Here, separation time is invariant under $F$ so $s((z,\ell),(w,\ell))=s((z,\ell-1),(w,\ell-1))$. Moreover, $JF=1$ on these levels, since the translation is straight upwards. Hence
	\begin{align*}
		(\p_n\varphi)(z, \ell) &= \frac{\varphi(z,\ell-1)}{JF(z,\ell-1)}\\
		& \le \frac{\varphi(w,\ell-1)e^{C\beta^{s((z,\ell-1),(w,\ell-1))}}}{1}\\
		& = (\p_n\varphi)(w,\ell)e^{C\beta^{s( (z,\ell), (w,\ell))}}.
	\end{align*}
	On the base, the story is different as for each $(z,0), (w,0)\in\Delta_0$ there are $n$ pre-images on the top levels of the tower. For $\ell=0,\dots,n-1$ let $(z_\ell, \ell)\in \Delta_{\ell,\ell+1}$ be such that $F(z_\ell,\ell) = (z,0)$ and similarly for $(w_{\ell}, \ell)$. Now we have $s( (z_{\ell}, \ell), (w_{\ell}, \ell)) = s( (z,0),(w,0)) + 1$ for every $\ell = 0,\cdots,n-1$. Then for any $\varphi\in\cone_n$
\begin{align*}
	(\p_n\varphi)(z,0) &= \sum_{\ell=0}^{n-1}\frac{\varphi(z_{\ell},\ell)}{|JF(z_{\ell},\ell)|}\\
	&\le \sum_{\ell=0}^{n-1}\frac{\varphi(w_{\ell}, \ell)}{|JF(w_{\ell},\ell)|} e^{C\beta^{s( (z_{\ell},\ell), (w_\ell,\ell))}}e^{c\beta^{s\circ F( (z_\ell,l), (w_\ell,\ell))}}\\
	&\le (\p_n\varphi)(w,0) \max_{\ell}  e^{C\beta^{s( (z_{\ell},\ell), (w_\ell,\ell))}}e^{c\beta^{s\circ F( (z_\ell,l), (w_\ell,\ell))}}\\
	&=  (\p_n\varphi)(w,0) \max_{\ell}  e^{C\beta^{s( (z,0), (w,0))+1}}e^{c\beta^{s\circ F( (z_\ell,l), (w_\ell,\ell))}}\\
	&= (\p_n\varphi)(w,0)e^{(C\beta + c)\beta^{s( (z,0), (w,0))}}.
\end{align*}

Since $C\beta + c \le C$ we have $\tilde\p_n\varphi \in \cone_n$ for all $\varphi\in\cone_n$ and therefore $\tilde\p_n\cone_n\subseteq \cone_n$.

It is easy to see that $\cone_n$ is convex as for any $\varphi,\phi\in\cone_n$ and $x,y\in \Delta\setminus H_n$, we have
\begin{align*}
	\lambda\varphi(x) + (1-\lambda)\phi(x) &\le (\lambda \varphi(y) + (1-\lambda)\phi(y))e^{C\beta^{s(x,y)}}.
\end{align*}

Moreover, the operator $\p_n$ is continuous as $\p$ is contractive:
\begin{align*}
	\|\p_n\varphi - \p_n\phi\|_{L^1} &= \| \p ((\varphi-\phi)\cdot\chi_{\Delta\setminus H_n})\|_{L^1}\\
	&\le \|(\varphi-\phi)\cdot\chi_{\Delta\setminus H_n}\|_{L^1} \le   \|\varphi - \phi\|_{L^1}.
\end{align*}

 By the integral mean value theorem and the conditions on $\varphi\in\cone_*^B$,
 \begin{displaymath}
\frac{1}{\nu(H_n^1)}\,\int_{H_n^1}\varphi\,d\nu \leq e^C\frac{1}{\nu(\Delta_0\setminus H_n^1)}\int_{\Delta_0\setminus H_n^1}\varphi\,d\nu.
 \end{displaymath}
Hence
\begin{eqnarray*}
1 = \int_\Delta\,\varphi\,d\nu &=&
\int_{H_n^1}\varphi\,d\nu+\int_{\Delta_0\setminus H_n^1}\varphi\,d\nu + \int_{\Delta\setminus\Delta_0}\varphi\,d\nu\\
&\leq& \left(e^C\,\frac{\nu(H_n^1)}{\nu(\Delta_0\setminus H_n^1)} + \frac{\nu(\Delta_0\setminus H_n^1)}{\nu(\Delta_0\setminus H_n^1)}  \right)\,\int_{\Delta_0\setminus H_n^1}\varphi\,d\nu + \int_{\Delta\setminus\Delta_0}\varphi\,d\nu\\
&<& e^C\,\frac{\nu(\Delta_0)}{\nu(\Delta_0\setminus H_n^1)}\, \left( \int_{\Delta_0\setminus H_n^1}\varphi\,d\nu + \int_{\Delta\setminus \Delta_0} \varphi \ d\nu\right)\\
&=& e^C\,\frac{\nu(\Delta_0)}{\nu(\Delta_0\setminus H_n^1)}\, \int_{\Delta\setminus H_n^1}\mathcal{P}_n\varphi\,d\nu=:\alpha\, \int_{\Delta}\mathcal{P}_n\varphi\,d\nu.
\end{eqnarray*}
Since $1/\|\p_n\varphi\| <\alpha$, the normalisation map $\p_n\varphi \mapsto \p_n\varphi/\|\p_n\varphi\|_{L^1}$ is continuous.
Combining the above results with Lemma~\ref{lem.1} we see that $\cone_n$ is a compact, convex set, invariant under the continuous map $\tilde{\p}_n$. The Schauder Fixed Point Theorem asserts that $\tilde\p_n$ has a fixed point $\varphi_n \in {\cone_n}$.
\end{proof}

We prove uniqueness of $\varphi_n$ in $\cone_n$ below.

\begin{corollary}
	Let $C$ satisfy the hypothesis of Theorem~\ref{th.1}. For each $n\in\mathbb{Z}^+$ there is a unique $\varphi_n \in {\cone_n}$ such that $\p_n\varphi_n = \lambda_n \varphi_n$ where $\lambda_n = \|\p_n\varphi_n\|_{L^1}$. In addition $\varphi_n$ is bounded above and below by positive constants.
	\label{cor.1}
\end{corollary}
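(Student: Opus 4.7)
The plan is to exploit the tower structure: on upper levels $F$ is a unit translation with $JF=1$ and a single preimage, so any eigenfunction is determined column-by-column by its values on the base $\Delta_0$. Uniqueness then reduces to a standard Perron--Frobenius statement for the first-return map on $\Delta_0$.

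Specifically, for $z\in\Delta_{0,i}$ and $1\le\ell<i\le n$, the eigen-equation $\p_n\varphi_n=\lambda_n\varphi_n$ together with the single-preimage, unit-Jacobian structure of $F$ on columns forces
\begin{equation*}
\varphi_n(z,\ell)=\lambda_n^{-\ell}\,\varphi_n(z,0).
\end{equation*}
Thus $\varphi_n$ on $\Delta\setminus H_n$ is determined by $\lambda_n$ together with the base values $\hat\varphi_n(z):=\varphi_n(z,0)$. If $\hat\varphi_n$ vanished on a positive-measure subset of $\Delta_0$, then the cone condition on $\Delta_0$ (where every pair of points is comparable) would force $\hat\varphi_n\equiv 0$, hence $\varphi_n\equiv 0$, contradicting $\int\varphi_n\,d\nu=1$. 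So $\int_{\Delta_0}\hat\varphi_n\,d\nu>0$, and the integral mean value theorem combined with the cone condition yields $\essinf_{\Delta_0}\hat\varphi_n\ge e^{-C}\int_{\Delta_0}\hat\varphi_n\,d\nu/\nu(\Delta_0)>0$. Since $\lambda_n\le 1$, this lower bound propagates to every cell: $\essinf_{\Delta_{\ell,i}}\varphi_n\ge\essinf_{\Delta_0}\hat\varphi_n>0$. The complementary upper bound is already contained in the proof of Lemma~\ref{lem.1}.

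For uniqueness, substituting the column formula back into the eigen-equation on the base yields
\begin{equation*}
\hat\varphi_n(z)=\sum_{i=1}^{n}\frac{\lambda_n^{-i}\,\hat\varphi_n(y_i)}{|JT^{i}(y_i)|},
\end{equation*}
where $y_i$ is the unique preimage of $z$ in $\Delta_{0,i}$ under the first-return map $T^R$. The right-hand side is the transfer operator of the full-branch, uniformly expanding Markov map $\hat T_n=T^R|_{\bigcup_{i\le n}\Delta_{0,i}}$, twisted by the bounded weight $\lambda_n^{-R}$. Standard Ruelle--Perron--Frobenius / Lasota--Yorke theory for such weighted operators on a space of functions regular with respect to the symbolic metric $d_\beta$ delivers a simple leading eigenvalue with a one-dimensional positive eigenspace, and the regularity forced on eigenfunctions is exactly the $\cone_*$ condition. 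The twist $\lambda_n$ is then determined by requiring this leading eigenvalue to equal $1$, and $\hat\varphi_n$ is unique up to the scalar fixed by $\|\varphi_n\|_{L^1}=1$. Via the column formula, $\varphi_n$ itself is unique in $\cone_n$.

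The main obstacle is that $\cone_n$ is not itself a Birkhoff cone, so the slickest projective-metric arguments do not apply directly; the reduction to the induced map side-steps this by moving to a setting where spectral uniqueness is classical. An alternative argument avoiding induced-map machinery is a direct difference argument: if $\varphi,\varphi'\in\cone_n$ are two fixed points, the two-sided bounds give $\underline r\varphi'\le\varphi\le\overline r\varphi'$; iterating $\p_n$ forces the eigenvalues to agree, and then $\varphi-t^*\varphi'$ with $t^*=\essinf(\varphi/\varphi')>0$ is a nonnegative $L^1$ eigenfunction whose base values vanish on a positive-measure subset of $\Delta_0$; full-branch expansivity of $\hat T_n$ extends the vanishing set to all of $\Delta_0$, and the column formula completes the argument.
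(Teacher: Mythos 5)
Your positivity and boundedness argument is essentially the paper's own: there too, $\essinf\varphi_n=0$ forces $\varphi_n$ to vanish on a whole comparability class (the regularity gives $\esssup\le e^C\essinf$ on each class), the unit-Jacobian column structure gives $0=\lambda_n^{\ell}\varphi_n(F^{\ell}x)=\varphi_n(x)$ on the base cell below, mutual comparability of $\Delta_0$ spreads the vanishing to all of $\Delta\setminus H_n$, contradicting normalisation; the upper bound is Lemma~\ref{lem.1} in both treatments. Where you genuinely diverge is uniqueness. The paper pins down $\lambda_n$ by noting that any density bounded above and below by positive constants yields the Lebesgue escape rate into $H_n$ (footnote~\ref{f.lebrate}), and proves uniqueness of $\varphi_n$ by the Pianigiani--Yorke trick: form $f_s=s\varphi_n+(1-s)\phi_n$, push $s$ beyond $1$ to the extremal $\sigma$ with $\essinf f_\sigma=0$, and reuse the positivity argument. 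You instead use the column formula $\varphi_n(z,\ell)=\lambda_n^{-\ell}\varphi_n(z,0)$ to reduce everything to the base, where the eigen-equation becomes a fixed-point equation for the transfer operator of the finite-branch, full-branch, uniformly expanding induced map weighted by $\lambda_n^{-R}$, and you invoke classical Ruelle--Perron--Frobenius theory. This is a legitimate and rather more structural route: simplicity of the leading eigenvalue is classical there, and uniqueness of $\lambda_n$ comes from monotonicity of the leading eigenvalue in the twist. To make it complete you should state explicitly the two facts you rely on -- strict monotonicity of $\lambda\mapsto\rho_n(\lambda)$, and that a strictly positive eigenfunction of such a positive operator can only correspond to the leading eigenvalue -- and verify the distortion and regularity hypotheses from (JF) and the $\cone_*$ condition. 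What the paper's route buys is self-containedness within the cone/tower framework and the identification of $\lambda_n$ with the Lebesgue escape rate, which is reused in Corollary~\ref{cor.2} and Lemma~\ref{lem.2}; what your route buys is an explanation of why the eigendata are unique (a classical finite-shift RPF picture) without any escape-rate detour.

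Your fallback ``direct difference argument'' does contain a genuine gap: from $t^{*}=\essinf(\varphi/\varphi')$ you cannot conclude that $\varphi-t^{*}\varphi'$ vanishes on a positive-measure subset of $\Delta_0$, since an essential infimum need not be attained -- the sets $\{\varphi<(t^{*}+\delta)\varphi'\}$ have positive measure for every $\delta>0$, but $\{\varphi=t^{*}\varphi'\}$ may well be null. In addition, $\varphi-t^{*}\varphi'$ is a difference of cone elements and need not satisfy the regularity condition defining $\cone_*$, which is exactly what the vanishing-propagation step uses; some extra continuity or compactness input is required (the paper's one-parameter family $f_s$, with $s\mapsto\essinf f_s$ continuous, is one way to organise this, though even there the regularity of the limit function deserves comment). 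Since your main induced-map argument does not depend on this fallback, the gap does not sink the proposal, but as written that paragraph does not prove uniqueness.
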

\begin{proof}
	Let $\varphi_n\in{\cone_n}$ be a fixed point of $\tilde{\p_n}$. Then $\varphi_n$ satisfies $\p_n \varphi_n = \lambda_n\varphi_n$ where $\lambda_n = \|\p_n\varphi_n\|_{L^1}$. Now if $\essinf\varphi_n = 0$ then the regularity of $\varphi_n$ ensures that $\varphi_n|_{\Delta_{\ell,i}}\equiv 0$ a.e. on some $\Delta_{\ell,i}$. Take any $x\in\Delta_{0,i}$. Then $0 = \lambda_n^{\ell}\varphi_n(F^{\ell}(x)) = (\p_n^{\ell}\varphi_n)(F^{\ell}(x))=\varphi_n(x)$, hence $\varphi_n|_{\Delta_{0,i}}\equiv 0$. All of $\Delta_0$ is comparable so this forces $\varphi_n$ to vanish on $\Delta_0$ and hence on all of $\Delta$. Clearly this is not possible as $\varphi_n$ is a density so necessarily $\essinf\varphi_n > 0$.
	
	Now, since $\varphi_n$ is bounded above\footnote{Note that $\varphi_n\in\cone_n$ and see Lemma~\ref{lem.1}.} and below by positive constants, $-\log\lambda_n$ is the Lebesgue escape rate into the hole $H_n$ so $\lambda_n$ is unique ({\em cf.} footnote~\ref{f.lebrate}).
	
	In the proof of uniqueness of $\varphi_n$ we borrow a technique from \citep{PY79}. Suppose that there is another eigenfunction $\phi_n$ with the same eigenvalue $\lambda_n$. For any $s\in \mathbb{R}$ we are able to construct another eigenfunction $f_s$ of $\p_n$, namely
	\begin{displaymath}
		f_s := s\varphi_n + (1-s)\phi_n.
	\end{displaymath}
Let $\sigma>1$ be the largest real number so that $\essinf f_s \ge 0$ for all $s\in(1,\sigma]$. Then necessarily $\essinf f_{\sigma} = 0$ and $f_{\sigma} = \lim_{s\to\sigma} f_s \in {\cone_n}$. We have already seen in the first part of the proof that this cannot be, hence $\varphi_n$ is unique.
\end{proof}

\begin{corollary}\label{cor.2}
	Let $C$ be such that Theorem~\ref{th.1} holds and let $\varphi_n$ and $\lambda_n$ be as in Corollary \ref{cor.1}. For each $n\in\mathbb{Z}^{+}$ let $\mu_n$ be the measure on $\Delta$ with density $\varphi_n = d\mu_n / d\nu$. Then $\mu_n$ is an absolutely continuous conditionally invariant probability measure for the open system with hole $H_n$. In particular $\lambda_n = 1-\mu_n(H_{n}^{1})$.
\end{corollary}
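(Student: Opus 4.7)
The plan is to unwind the eigenvalue equation $\mathcal{P}_n\varphi_n=\lambda_n\varphi_n$ from Corollary~\ref{cor.1} into a statement about the measure $\mu_n = \varphi_n\,\nu$, and then to extract the identity $\lambda_n = 1-\mu_n(H_n^1)$ by integrating over $\Delta$. This is essentially a mass-balance bookkeeping exercise; the key geometric input will be the identity $H_n^1 = (\Delta\setminus H_n)\cap F^{-1}H_n$ already recorded in \eqref{eq.useful1}.

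For the conditional invariance statement I would first observe that $\varphi_n\in\cone_n$ is nonnegative with $\int_\Delta \varphi_n\,d\nu = 1$ and $\varphi_n|_{H_n}\equiv 0$, so $\mu_n$ is a probability measure supported on $\Delta\setminus H_n$. For any measurable $E\subseteq \Delta\setminus H_n$, the definition of $\mathcal{P}_n$ together with the duality of the Perron-Frobenius operator with composition gives
\[
\lambda_n\mu_n(E) = \int_E \mathcal{P}_n\varphi_n\,d\nu = \int_E \mathcal{P}(\varphi_n\chi_{\Delta\setminus H_n})\,d\nu = \int_{F^{-1}E} \varphi_n\chi_{\Delta\setminus H_n}\,d\nu = \mu_n\bigl((F|_{\Delta\setminus H_n})^{-1}E\bigr),
\]
which is exactly the conditional invariance identity $\mu_n\circ(F|_{\Delta\setminus H_n})^{-1} = \lambda_n\mu_n$.

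To establish the eigenvalue formula I would integrate $\mathcal{P}_n\varphi_n = \lambda_n\varphi_n$ over all of $\Delta$ and use that $\mathcal{P}$ preserves the $L^1$-norm together with $\int_\Delta\varphi_n\,d\nu = 1$:
\[
\lambda_n = \int_\Delta \mathcal{P}_n\varphi_n\,d\nu = \int_{\Delta\setminus H_n}\mathcal{P}(\varphi_n\chi_{\Delta\setminus H_n})\,d\nu = \int_\Delta \mathcal{P}\varphi_n\,d\nu - \int_{H_n}\mathcal{P}\varphi_n\,d\nu = 1 - \int_{F^{-1}H_n}\varphi_n\,d\nu.
\]
Because $\varphi_n$ vanishes on $H_n$, the last integral reduces to $\int_{F^{-1}H_n\cap(\Delta\setminus H_n)}\varphi_n\,d\nu$, which by \eqref{eq.useful1} equals $\mu_n(H_n^1)$. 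Hence $\lambda_n = 1-\mu_n(H_n^1)$.

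There is no serious obstacle here; the only point that requires care is tracking the support of $\varphi_n$ so that $F^{-1}H_n$ can legitimately be replaced by $H_n^1$ in the mass-flux integral, and distinguishing $\mathcal{P}$ from $\mathcal{P}_n$ when passing between them via the support of $\varphi_n$.
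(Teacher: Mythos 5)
Your proposal is correct and follows essentially the same route as the paper: the paper cites the standard fact that nonnegative normalised eigenfunctions of $\p_n$ give ACCIMs and then performs the same mass-balance, computing $\lambda_n=\mu_n(F^{-1}(\Delta\setminus H_n)\setminus H_n)=1-\mu_n(H_n^1)$ via \eqref{eq.useful1}, which is exactly your integrated eigen-equation written in measure language. The only difference is cosmetic: you prove the cited correspondence directly through the duality of $\p$, making the argument slightly more self-contained.
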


\begin{proof}
	It is a well known result that nonnegative normalised eigenfunctions of the Perron-Frobenius operator are densities of absolutely continuous conditionally invariant probability measures (see e.g. \citep{PY79, DY06}). As $\mu_n$ is conditionally invariant with eigenvalue $\lambda_n$ we have
	\begin{align*}
		\lambda_n &= \mu_n(F^{-1} (\Delta\setminus H_n) \setminus H_n)\\
		&= \mu_n(F^{-1}(\Delta)\setminus H_n) - \mu_n(F^{-1}(H_n)\setminus H_n) = 1 - \mu_n(H_n^1).
	\end{align*}
\end{proof}

\subsection{Convergence of ACCIM to the ACIM of the closed system}

\begin{lemma}\label{lem.2}
	Let $\varphi_n\in\cone_n$ be as in Theorem~\ref{th.1}. There exist positive constants $a$ and $b$ (independent of $n$) such that $\essinf_{\Delta\setminus H_n} \varphi_n \ge a$ and $\esssup_{\Delta\setminus H_n} \varphi_n \le b$ for all $n\in\mathbb{Z}^+$.
\end{lemma}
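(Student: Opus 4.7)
The plan is to exploit three ingredients: the explicit column scaling
\begin{displaymath}
\varphi_n(z,\ell)=\lambda_n^{-\ell}\,\varphi_n(z,0)\quad\text{for }(z,\ell)\in\Delta_{\ell,i},\ \ell<i\le n,
\end{displaymath}
the uniform mean-value bound on $\Delta_0$ used in the proof of Lemma~\ref{lem.1}, and the polynomial tail estimate $\nu(H_n^1)\asym n^{-1/\alpha}$ from \eqref{e.sizehole}. The column scaling follows by iterating the eigen-equation $\p_n\varphi_n=\lambda_n\varphi_n$ on the sides of the tower, where $JF\equiv 1$ and each point $(z,\ell)$ has the single preimage $(z,\ell-1)$.

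First, since $\varphi_n\in\cone_n$ is a probability density satisfying the cone inequality, the integral mean-value trick from Lemma~\ref{lem.1} gives
\begin{displaymath}
\esssup_{\Delta_0}\varphi_n\le\frac{e^C}{\nu(\Delta_0)}\int_{\Delta_0}\varphi_n\,d\nu\le\frac{e^C}{\nu(\Delta_0)}=:B_0,
\end{displaymath}
uniformly in $n$. By Corollary~\ref{cor.2}, $1-\lambda_n=\mu_n(H_n^1)\le B_0\,\nu(H_n^1)\le C_1 n^{-1/\alpha}$. Now the assumption $\alpha\in(0,1)$ enters: $n^{1-1/\alpha}\to 0$, so $n\log\lambda_n\to 0$, and together with $\lambda_n>0$ for each $n$ this gives a uniform lower bound $\lambda_n^n\ge c_0>0$. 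Equivalently $\lambda_n^{-\ell}\le c_0^{-1}$ for $\ell=0,\ldots,n-1$, and the column scaling then delivers the uniform upper bound $\varphi_n(z,\ell)\le c_0^{-1}B_0=:b$ on $\Delta\setminus H_n$.

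For the lower bound I would integrate the column formula. Writing $M_n^\ell:=\int_{\bigcup_{i=\ell+1}^n\Delta_{0,i}}\varphi_n|_{\Delta_0}\,d\nu$ and $m_n^\ell:=\nu(\bigcup_{i=\ell+1}^n\Delta_{0,i})$, the formula together with Corollary~\ref{cor.2} gives
\begin{displaymath}
\lambda_n=1-\mu_n(H_n^1)=\sum_{\ell=0}^{n-1}\lambda_n^{-\ell}M_n^\ell\le\bigl(\esssup_{\Delta_0}\varphi_n\bigr)\,c_0^{-1}\sum_{i\ge 1}i\,\nu(\Delta_{0,i}),
\end{displaymath}
after switching the order of summation. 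The last series converges precisely because $\sum_i i\cdot i^{-1-1/\alpha}$ is summable when $\alpha<1$. Rearranging, $\esssup_{\Delta_0}\varphi_n$ is bounded below by a positive constant for all sufficiently large $n$ (the finitely many small values are handled directly, since each $\varphi_n$ is a strictly positive density with a positive supremum). Regularity then yields $\essinf_{\Delta_0}\varphi_n\ge e^{-C}\esssup_{\Delta_0}\varphi_n>0$ uniformly, and since $\lambda_n^{-\ell}\ge 1$ the column formula extends this to $\essinf_{\Delta\setminus H_n}\varphi_n\ge a>0$.

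The main obstacle is the bootstrap from the tail bound to uniform control of the vertical scaling factors: converting $\mu_n(H_n^1)\le C_1 n^{-1/\alpha}$ into $\lambda_n^n\ge c_0$ independently of $n$. Both this step and the convergence of $\sum_i i\,\nu(\Delta_{0,i})$ depend in an essential way on $\alpha<1$—the same integrability of return times that makes the closed system admit a finite ACIM. Once this uniform control is in place, the upper and lower bounds on $\varphi_n$ reduce to elementary integral estimates on $\Delta_0$ combined with the column formula.
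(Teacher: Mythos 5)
Your proposal is correct and follows essentially the same route as the paper: a lower bound $\lambda_n\ge 1-\mathcal{O}(1/n)$ from comparability on $\Delta_0$, a uniform bound on $\lambda_n^{-n}$, and the column-scaling/mass decomposition to pin down the density on the base, then propagate up the tower. The only cosmetic differences are that you invoke the tail asymptotics $\nu(H_n^1)\asym n^{-1/\alpha}$ where the paper uses the cruder Markov-type bound $\nu(H_n^1)<\nu(\Delta)/n$, and you work with $\esssup_{\Delta_0}\varphi_n$ and convert to $\essinf$ by regularity, while the paper bounds $\essinf_{\Delta_0}\varphi_n$ directly.
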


\begin{proof}
	Fix $n\in\mathbb{Z}^{+}$ and let $\varphi_{n,i} = \varphi_n|_{\Delta_{0,i}}$ for $1 \le i \le n$ and $\varphi_{n,n+1} = \varphi_n|_{H_n^1}$. First we approximate a lower bound of $\lambda_n$ and then obtain a uniform upper bound for $\lambda_n^{-n}$. We begin with the result of Corollary \ref{cor.2} to obtain
\begin{displaymath}
		\lambda_n = 1 - \int_{H_n^{1}} \varphi_{n} \ d\nu
		 \ge 1 - \nu(H_n^1) e^C \frac{\int_{\Delta_0} \varphi_n \ d\nu}{\nu(\Delta_0)}
		 \ge 1 - \frac{\nu(H_n^1)}{\nu(\Delta_0)}e^C,
\end{displaymath}
	where the first inequality above is a consequence of the integral mean value theorem and the property that $\varphi(x) \le e^C\varphi(y)$ for all $x,y \in \Delta_0$. Now using the fact that $\nu(H_n^1)\cdot n < \nu(\Delta)$ we obtain
\begin{displaymath}
	\lambda_n \ge 1 - \frac{e^C \nu(\Delta)}{n \cdot \nu(\Delta_0)} = 1 - \frac{C'}{n}
\end{displaymath}
for some constant $C' := e^C\nu(\Delta) / \nu(\Delta_0)$ independent of $n$. Next choose $n^{*}\in\mathbb{Z}^+$ so that $C'/n < 1/2$ for all $n\ge n^*$. By the mean value theorem there is a constant $C''$ such that $\log(1-C'/n)\geq -C''/n$ for all $n\geq n^*$ and hence
\begin{equation}
	\lambda_{n}^{-n} \le \left( 1-\frac{C'}{n} \right)^{-n} =e^{-n\,\log(1-C'/n)}\le e^{C''}.
	\label{eq.lambda_n}
\end{equation}
Using the bound on $\lambda_n^{-n}$ and the fact that $\varphi_n$ is an eigenvector of norm $1$ we obtain
\begin{align*}
	1  = \|\varphi_n\|_{L^1} &=  \|\varphi_{n,n+1}\|_{L^1} + \sum_{i=1}^{n} \sum_{j=1}^{i} \lambda_n^{-(j-1)}\|\varphi_{n,i}\|_{L^1}\\
	&\le (\essinf_{\Delta_0}\varphi_n) e^C \left( \nu(H_n^1) + \sum_{i=1}^{n}\sum_{j=1}^{i}\lambda_n^{-(j-1)} \nu(\Delta_{0,i}) \right)\\
	&\le (\essinf_{\Delta_0}\varphi_n)e^C \left( \nu(H_n^1) + \sum_{i=1}^{n} i\cdot e^{C''} \nu(\Delta_{0,i}) \right)\\
	&\le (\essinf_{\Delta_0}\varphi_n)e^{C+C''}\left( \sum_{i=1}^{\infty} i\cdot \nu(\Delta_{0,i}) \right)\\
	& = (\essinf_{\Delta_0}\varphi_n)e^{C+C''}\nu(\Delta).
\end{align*}
Hence we have a uniform lower bound on $\varphi_n$ on $\Delta_0$ and therefore on $\Delta\setminus H_n$ for all $n\ge n^*$:
\begin{displaymath}
	\essinf_{\Delta\setminus H_n} \varphi_n = \essinf_{\Delta_0}\varphi_n \ge \left(e^{C+C''}\nu(\Delta)\right)^{-1} > 0.
\end{displaymath}
With \eqref{eq.lambda_n} we are also able estimate an upper bound of $\varphi_n$, for $n\ge n^*$:
\begin{displaymath}
	\esssup_{\Delta\setminus H_n} \varphi_n \le \lambda_n^{-n} \esssup_{\Delta_0}\varphi_n
	\le e^{C''} \frac{e^C}{\nu(\Delta_0)} \int_{\Delta_0}\varphi_n \ d\nu
	 \le \frac{e^{C+C''}}{\nu(\Delta_0)}.
\end{displaymath}
Since $n^*$ is finite, we conclude that there exist constants $a>0$ and $b>0$ such that $\essinf \varphi_n \ge a$ and $\esssup \varphi_n \le b$ for all $n\ge 1$.
\end{proof}

\begin{corollary}
Let the hypotheses of Theorem~\ref{th.1} hold. There are constants $a$ and $b$ (independent of $n$) such that
\begin{displaymath}
a \leq \lim_{n\rightarrow\infty}\frac{\log\lambda_n}{\nu(H^1_n)}\leq b.
\end{displaymath}
\end{corollary}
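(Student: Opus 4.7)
The plan is to combine Corollary \ref{cor.2} with the uniform density bounds in Lemma \ref{lem.2}, and then convert multiplicative estimates on $\lambda_n$ into additive ones via a Taylor expansion of $\log$ near $1$. Concretely, Corollary \ref{cor.2} gives $1-\lambda_n = \mu_n(H_n^1) = \int_{H_n^1}\varphi_n\,d\nu$, and Lemma \ref{lem.2} guarantees constants $a,b>0$ with $a\le\varphi_n\le b$ uniformly in $n$. So I would first write down the two-sided bound
\[
a\,\nu(H_n^1)\ \le\ 1-\lambda_n\ \le\ b\,\nu(H_n^1),
\]
which is the heart of the estimate.

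Next I would use \eqref{e.sizehole} to note that $\nu(H_n^1)\asym n^{-1/\alpha}\to 0$, so $1-\lambda_n\to 0$ as well. This lets me Taylor-expand: for $x\in(0,1)$, $-\log(1-x)=x+O(x^2)$, so
\[
-\log\lambda_n=(1-\lambda_n)\bigl(1+o(1)\bigr)\quad\text{as }n\to\infty.
\]
Dividing the displayed sandwich by $\nu(H_n^1)$ and multiplying by the factor $(1+o(1))$ then yields
\[
a\ \le\ \liminf_{n\to\infty}\frac{-\log\lambda_n}{\nu(H_n^1)}\ \le\ \limsup_{n\to\infty}\frac{-\log\lambda_n}{\nu(H_n^1)}\ \le\ b,
\]
which, up to a sign convention on $a,b$, is the stated conclusion (the constants $a,b$ being precisely those from Lemma \ref{lem.2}).

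There is essentially no obstacle here beyond bookkeeping: all the hard work is already done in Lemma \ref{lem.2} (uniform two-sided bounds on the eigendensities, which prevent $\lambda_n$ from degenerating in either direction) and Corollary \ref{cor.2} (the identity $\lambda_n=1-\mu_n(H_n^1)$). The only subtlety worth flagging is that the estimate is written as a limit, but strictly what the argument above delivers is a $\liminf$/$\limsup$ sandwich; the statement should be read in that sense unless one additionally knows that $(1-\lambda_n)/\nu(H_n^1)$ converges, which does not follow from the current bounds and is not needed for any later application in the paper.
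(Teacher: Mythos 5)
Your proposal is correct and follows essentially the same route as the paper: the paper's proof is exactly the combination of the identity $1-\lambda_n=\int_{H_n^1}\varphi_n\,d\nu$ (Corollary~\ref{cor.2}), the uniform bounds $a\le\varphi_n\le b$ from Lemma~\ref{lem.2}, and the fact $\lim_{x\to 1}\frac{\log x}{1-x}=1$. Your remarks about the sign convention and reading the ``$\lim$'' as a $\liminf$/$\limsup$ sandwich are fair observations about the statement, not gaps in the argument.
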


\begin{proof}
	Using $\lim_{x\to 1}\frac{\log x}{1-x}=1$ and $1-\lambda_n=\int_{H_n^1}\,\varphi_n\,d\nu$ in conjunction with the result of Lemma~\ref{lem.2} proves the claim.
\end{proof}

\begin{theorem}
	\label{th.2}
	For every positive integer $n$ let $\varphi_n \in \cone_n$ and $\lambda_n < 1$ be as in Corollary \ref{cor.1}. Then $\varphi_n \stackrel{L^1}{\to}\varphi$, where $\varphi$ is the density of the unique absolutely continuous invariant probability measure $\mu$ of the closed system $F:\Delta\circlearrowleft$.
\end{theorem}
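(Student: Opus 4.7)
The plan is to combine the uniform $L^\infty$ bounds from Lemma~\ref{lem.2} with the compactness of the regular cone (Lemma~\ref{lem.1}) to pass to a subsequential limit, then identify that limit as the unique ACIM density $\varphi$ by passing the eigenvalue equation $\p_n\varphi_n=\lambda_n\varphi_n$ to the limit. A standard subsequence-of-subsequence argument will then upgrade subsequential convergence to full-sequence convergence.

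More concretely, first I would observe that by Lemma~\ref{lem.2} the whole family $\{\varphi_n\}$ is contained in $\cone_*^{b}$ for the uniform constant $b$ provided by that lemma (each $\varphi_n$ belongs to $\cone_*$, vanishes on $H_n$, and is bounded by $b$). Lemma~\ref{lem.1} then says $\cone_*^{b}$ is compact in $L^1(\Delta,\nu)$, so any subsequence of $\{\varphi_n\}$ admits a further subsequence $\varphi_{n_k}\to\varphi^*$ in $L^1(\Delta,\nu)$, with $\varphi^*\in\cone_*^{b}$. Since $L^1$-convergence preserves the integral, $\int\varphi^*\,d\nu=1$, so $\varphi^*$ is a density.

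The central step is to show $\p\varphi^*=\varphi^*$. Because $\varphi_n$ vanishes on $H_n$, the eigenvalue equation rewrites as
\begin{displaymath}
\lambda_n\varphi_n=\p_n\varphi_n=\chi_{\Delta\setminus H_n}\,\p\varphi_n,
\end{displaymath}
so that $\p\varphi_n=\lambda_n\varphi_n+\chi_{H_n}\p\varphi_n$. The correction term has $L^1$-norm equal to $1-\lambda_n$, since
\begin{displaymath}
\int_{H_n}\p\varphi_n\,d\nu=\int\p\varphi_n\,d\nu-\int\chi_{\Delta\setminus H_n}\p\varphi_n\,d\nu=1-\lambda_n,
\end{displaymath}
and the corollary after Lemma~\ref{lem.2} (together with $\nu(H_n^1)\to 0$ from \eqref{e.sizehole}) gives $\lambda_n\to 1$. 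Combining this with $L^1$-contractivity of $\p$ and $\varphi_{n_k}\to\varphi^*$ yields $\p\varphi_{n_k}\to\p\varphi^*$ and $\lambda_{n_k}\varphi_{n_k}\to\varphi^*$ in $L^1$, hence $\p\varphi^*=\varphi^*$. Since the closed system $F:\Delta\circlearrowleft$ admits a unique ACIM density $\varphi$ (it is the tower lift of the unique ACIM of $T$, and any $\p$-fixed density in $\cone_*^b$ must be proportional to it), we conclude $\varphi^*=\varphi$. Because every subsequence of $\{\varphi_n\}$ has a further subsequence converging in $L^1$ to the same limit $\varphi$, the full sequence converges: $\varphi_n\to\varphi$ in $L^1(\Delta,\nu)$.

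The only place any real care is needed is the identification $\p\varphi^*=\varphi^*$; the potential obstacle is that the cutoff $\chi_{\Delta\setminus H_n}$ multiplies $\p\varphi_n$ rather than $\varphi_n$, but the identity $\|\chi_{H_n}\p\varphi_n\|_{L^1}=1-\lambda_n$ handles this directly once $\lambda_n\to 1$ is in hand. Everything else is soft: compactness (Lemma~\ref{lem.1}), uniform bounds (Lemma~\ref{lem.2}), and uniqueness of the ACIM on the tower.
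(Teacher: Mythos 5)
Your proposal is correct and follows essentially the same route as the paper: uniform bounds from Lemma~\ref{lem.2} place all $\varphi_n$ in the compact set $\cone_*^{b}$ of Lemma~\ref{lem.1}, a subsequential $L^1$ limit is extracted, shown to be a fixed density of $\p$, and identified with $\varphi$ by uniqueness of the ACIM, with the subsequence-of-subsequence argument giving full convergence. The only (cosmetic) difference is in identifying the limit: you pass the eigenvalue equation to the limit in $L^1$ via the mass estimate $\|\chi_{H_n}\p\varphi_n\|_{L^1}=1-\lambda_n\to 0$, whereas the paper argues at the level of measures, showing $\mu'\circ F^{-1}\leq\mu'$ and hence invariance — both hinge on $\lambda_n\to1$ and uniqueness, so the content is the same.
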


\begin{proof}
	The result of Lemma~\ref{lem.2} ensures that all $\varphi_n$ are elements of $\cone_{*}^{b}$, which, as seen in Lemma~\ref{lem.1}, is compact. Hence a subsequence of $\{\varphi_n\}$, say $\{\varphi_{n_i}\}$ converges to some density $\varphi'$. Let $\{\mu_{n_i}\}$ and $\mu'$ be the corresponding measures. Then for any measurable $A \subseteq \Delta$ we have
	\begin{align*}
		\mu'(F^{-1}A) &= \lim_{i\to\infty} \mu_{n_i}(F^{-1}A)\\
    &\leq \lim_{i\to\infty} \mu_{n_i}(F^{-1}(A\setminus H_{n_i}))\\
		&= \lim_{i\to\infty} \lambda_{n_i}\mu_{n_i}(A\setminus H_{n_i})\\
		&= \lim_{i\to\infty} \lambda_{n_i} \mu_{n_i}(A) = \mu'(A).
	\end{align*}
But $\mu'\circ F^{-1}\leq \mu'$ is possible only if $\mu'$ is invariant, therefore $\mu'=\mu$ and $\varphi'=\varphi$ almost everywhere. Hence $\varphi_n \to \varphi$ in $L^1(\Delta,\nu)$ as required.
\end{proof}


\section{On second eigenfunctions}\label{sec.secondeigenfunctions}

The model in the previous section allowed the construction of ACCIMs for certain towers with holes. We now apply these ideas to construct a second eigenfunction for a small perturbation of the Perron-Frobenius operator for the PM map. The new family of towers will be parameterized by $\epsilon$. In connection with the previous section, $\epsilon$ will be approximately the size of the preimage of the hole~$H_n$. Now, orbits dropping into $H_n$ are not lost to the system, but return. The idea is to model the effect of a small random perturbation, analogous to the Ulam discretization; we will use a {\em tower with two bases\/}~$\Delta^\epsilon$. One piece is essentially $\Delta\setminus H_n$, modelling the dynamics of $T$ until leakage into $[0,\mathcal{O}(\epsilon)]$, the other piece will model a smoothed version of the dynamics of $T$ returning from the neighbourhood of $0$. It should be mentioned that in this section, to distinguish between Perron-Frobenius operators of various maps, we may write the corresponding map in the subscript (e.g. $\p_T$). This should not be confused with our previous notation for representing the conditional Perron-Frobenius operator.

\subsection{Construction of a revised tower}

Let $\Delta$ be a tower with a polynomial tail as above:
\begin{equation}\label{e.size}
	\nu(\Delta_{0,i})\asym i^{-1-1/\alpha}.
\end{equation}
Next, fix $\epsilon>0$ and choose $n:=n(\epsilon)$ to be minimal such that $\nu\{\cup_{i\geq n+1}\Delta_{0,i}\}\leq \epsilon$. Then there is a constant $d_1$ such that
\begin{equation}\label{e.tail}
n\leq d_1\epsilon^{-\alpha}.
\end{equation}
Fix $\epsilon_1=\nu(\cup_{i\geq n+1}\Delta_{0,i})$ and $\epsilon_2=\nu(\Delta_{0,n+1})$. Then there are constants $d_2,d_3$ such that
\begin{equation}\label{e.small}
d_2\,\frac{\epsilon_1}{n} \leq \epsilon_2 \leq d_3\,\frac{\epsilon_1}{n}.
\end{equation}
The first tower is $\Depss=\Delta\setminus H_n$ where $H_n=\cup_{i\geq n+1}\cup_{\ell\ge 1}\Delta_{\ell,i}$. Thus, $\Delta_0$ is the base of the tower $\Depss$. Put $\Depss_{0,n+1}=\cup_{i\geq n+1}\Delta_{0,i}$ (the pre-image of the hole $H_n$) and let $\{\Delta_{0,i}\}_{i=1}^n$ (same notation as in Section~\ref{sec.1}) define the remainder of the partition of $\Depss_0$. The base of the second tower~$\Deps$ is identified with $\cup_{i\geq n+1}\Delta_{1,i}$. Let $\iota:(\cup_{i\geq n+1}\Delta_{0,i})\rightarrow [0,\epsilon_1)$ be a Lebesgue measure preserving bijection such that $\iota(\cup_{i> n+1}\Delta_{0,i})=[0,\epsilon_1-\epsilon_2)$ and $\iota(\Delta_{0,n+1})=[\epsilon_1-\epsilon_2,\epsilon_1)$. Put
$\Deps_0=\iota(\cup_{i\geq n+1}\Delta_{0,i})\times\{0\}$. We will replace the action of $F$ on $H_{n}^1=\cup_{i\geq n+1}\Delta_{0,i}$ by $\iota$; instead of direct upwards translation, $H_n^1$ is slid out of the way and regarded as the base of a separate tower. Partition
\begin{displaymath}
\Deps_{0}=\Deps_{0,1}\cup\Deps_{0,n}
\end{displaymath}
where $\Deps_{0,1}=[0,\epsilon_1-\epsilon_2)$ and $\Deps_{0,n}=[\epsilon_1-\epsilon_2,\epsilon_1)$.

The height function is defined as
\begin{displaymath}
h^\epsilon(x) = \left\{\begin{array}{ll} k&x\in \Depss_{0,k}, k\leq n\\
1 & x\in \Depss_{0,{n+1}}\\
1 & x\in \Deps_{0,1}\\
n & x\in\Deps_{0,n}\end{array}\right.
\end{displaymath}
Then $\Delta^\epsilon:=\Deps\cup\Depss$ where $\Depsj=\{(x,\ell)~:~x\in \Depsj_0, \ell<h^\epsilon(x)\}$ for $*\in\{+,-\}$. Points in the tower will be said to be ``in $\Depsj$'' with the obvious meaning. 

The dynamics on $\Depss$ are
\begin{displaymath}
	F^\epsilon(x,\ell) = \left\{\begin{array}{ll} F(x,\ell)&\mbox{if~}(x,\ell)\in\Depss\setminus\Depss_{0,n+1}\\
    (\iota(x),0)& (x,0)\in \Depss_{0,n+1}.\end{array}\right.
\end{displaymath}

Note that when $(x,0)\in\Depss_{0,n+1}$, $F^\epsilon(x,0)\in\Deps_0$.    On $\Deps$ put
\begin{displaymath}
	F^\epsilon(x,\ell) = \left\{\begin{array}{ll} \left(\frac{\epsilon_1}{\epsilon_1-\epsilon_2}\,x,0\right)&(x,\ell)\in\Deps_{0,1}\\
(x,\ell+1)& (x,\ell)\in\Deps_{\ell,n},\ell<n-1\\
F^{n+1}(\iota^{-1}(x),0)& (x,\ell)\in\Deps_{n-1,n}\end{array}\right.
\end{displaymath}
Note that when $(x,n-1)\in\Deps_{n-1,n}$, $F^\epsilon(x,n-1)\in\Depss_0$.

The picture on $\Deps$ is different to the usual tower dynamics and has been designed to allow the analysis of a small random perturbation of the Pomeau-Manneville map $T$. When the return time to $\Delta_0$ is greater or equal to $n$ (corresponding to arrival by orbits of $T$ in a small neighbourhood of $0$), the dynamics of $T$ are replaced by weak affine expansion near $0$.
The natural measure $\nu^\epsilon$ on $\Delta^\epsilon$ is obtained from Lebesgue measure on the interval in the usual way.

The separation time~$s^\epsilon$ on $\Delta^\epsilon$ is defined with respect to the partition of $\Depss_0$ so  the satisfaction of (JF) for $F^\epsilon$ is inherited from the corresponding property of $F$.

\begin{figure}
\begin{center}
 \includegraphics[width=16cm]{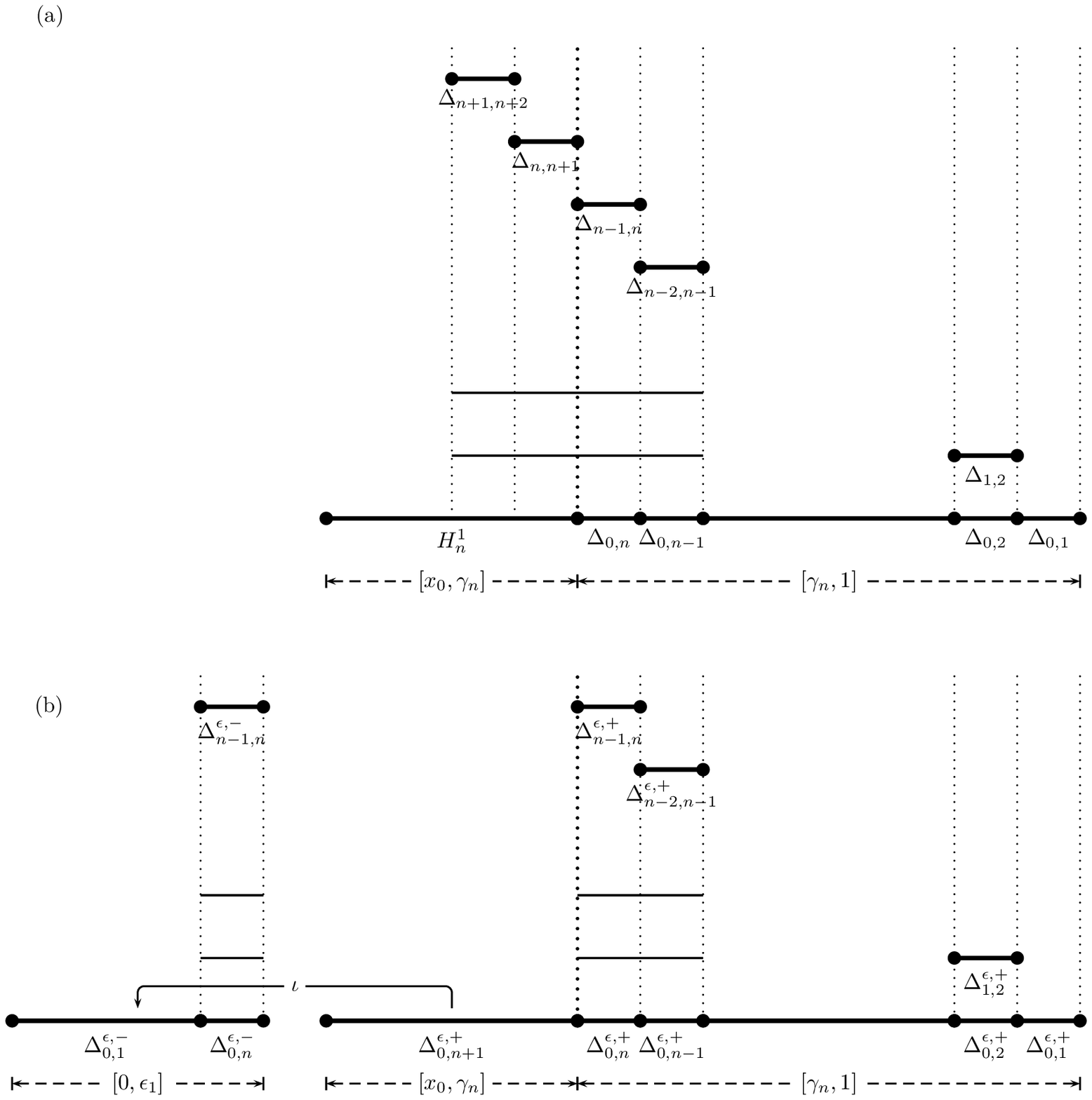}
  \caption{The two towers:  (a) The tower of Section \ref{sec.firsteigenfunctions} with base identified with $[x_0,1]$;  (b) The two towers of Section \ref{sec.secondeigenfunctions} with bases identified with $[0,\epsilon_1]$ and $[x_0,1]$. The left hand tower contains negative mass and the right hand tower contains positive mass.  The map $\iota$ slides the base $\Delta_{0,n+1}^{\epsilon,+}$ across to the base $\Delta_{0}^{\epsilon,-}$.} \label{fig.twotower}
\end{center}
\end{figure}

\subsection{A modified cone and transfer operator}

Fix constants $g,G>0$ and $C^\prime > c/(1-\beta)$. Let $\epsilon>0$ and assume the constants $d_1,d_2,d_3$ and $\alpha$ are all fixed such that $\Delta^\epsilon$ is built with $n$, $\epsilon_1,\epsilon_2$ satisfying (\ref{e.tail}) and (\ref{e.small}).

$\cone^{\epsilon,G,g}$ consists of functions $\psi$ satisfying the conditions (C1.a)-(C2.b):
\begin{enumerate}
\item[(C1.a)] $\psi>0$ a.e. on $\Depss$ and $\int_{\Depss}\psi = \frac{1}{2}$;
\item[(C1.b)] for almost every $z,y\in\Depss_\ell$, $\psi$ satisfies the {\em regularity condition\/},
$$\frac{\psi(z)}{\psi(w)}\le e^{C^\prime\beta^{s^\epsilon(z,w)}};$$
\item[(C1.c)] letting $\psi_\ell=\psi|_{\Depss_\ell}$ ($\ell<n$), the {\em growth condition\/}
$$
\|\psi_\ell\|_\infty \leq G\,\left(1-\frac{g}{n}\right)^{-\ell};$$
holds.
\item[(C2.a)] $\psi<0$ on $\Deps$ and $\int_{\Deps}\psi = \frac{-1}{2}$;
\item[(C2.b)] $\psi$ is piecewise constant on the levels of $\Deps$. Explicitly,
$$\psi|_{\Deps} = a_0\,\chi_{\Deps_0}+\sum_{\ell=1}^{n-1}a_\ell\,\chi_{\Deps_{\ell,n}}$$
and the numbers $\{a_\ell\}_{\ell=0}^{n-1}$ satisfy the {\em growth condition\/}
$$ |a_\ell|\leq |a_0|\Lambda^{-\ell}$$
where
$$\Lambda=\Lambda(\epsilon):=1-2\,\frac{d_3}{n}.$$
\end{enumerate}

Functions in $\cone^{\epsilon,G,g}$ have zero mean, are negative on $\Deps$ and positive on $\Depss$; they have good ``horizontal'' regularity, and a controlled growth rate up the levels of the tower. The more flexible growth rate up levels of the tower in $\Depss$ is made for convenience.

Let $\PFeps$ be the Perron-Frobenius operator for $F^\epsilon$ acting on functions in $\cone^{\epsilon,G,g}$. Unfortunately, $\PFeps\psi$ will typically not be piecewise constant on $\Deps$. To get around this problem, let $\ADeps$ act according to
\begin{equation}\label{e.piepsilon}
\ADeps\psi(x) = \left\{\begin{array}{ll}\frac{\int_{\Deps_0}\,\psi\,d\nu^\epsilon}{\nu^\epsilon(\Deps_0)}&x\in\Deps_0,\\ \psi(x)&\text{otherwise,}\end{array}\right.
\end{equation}
and put $\Leps=\ADeps\,\PFeps$. Notice that $\Leps$ is the transfer operator associated with replacing $F^\epsilon(x)$ with a random perturbation uniformly distributed on $\Deps_0$ whenever $F^\epsilon(x)\in\Deps_0$.
We will show that the normalised version of $\Leps$ has a fixed point in $\cone^{\epsilon,G,g}$; this
is an eigenfunction of $\Leps$ with eigenvalue bounded below by $\Lambda(\epsilon)$. We prepare by establishing some upper and lower bounds on elements of $\cone^{\epsilon,G,g}$.

\begin{lemma}\label{lem.5}
Suppose that $0<\psi:\Depss\rightarrow \mathbb{R}_+$ satisfies $\psi\leq B\,\|\psi\|_{L^1}$ and $\psi(x)/\psi(y)<A$ a.e. if $x,y$ are on the same level of the tower. For all small enough $\epsilon$ there is $\rho>0$ depending on $B$ and $JF$ but not $\epsilon$ or $A$ such that $(\Leps\psi)|_{\Depss_0}\geq \rho\,\|\psi\|_{L^1}/A^2$ a.e.
\end{lemma}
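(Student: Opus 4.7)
The plan is to reduce the statement to a transfer-operator computation on $\Depss_0$, bound each summand using bounded distortion of $F$ and the regularity hypothesis, and then extract a uniform lower bound by exploiting the $L^\infty$ hypothesis together with the polynomial tail estimate \eqref{e.size}.

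First I would observe that $\ADeps$ acts as the identity off $\Deps_0$, so $\Leps\psi|_{\Depss_0} = \PFeps\psi|_{\Depss_0}$. Since $\psi$ is supported on $\Depss$, the preimages of $x\in\Depss_0$ that contribute to $\PFeps\psi(x)$ are exactly the top-of-level points $y_\ell(x)\in\Delta_{\ell,\ell+1}$ for $\ell=0,1,\dots,n-1$ (preimages from $\Deps_{n-1,n}$ are killed by $\psi\equiv 0$ on $\Deps$). Hence
\[
(\Leps\psi)(x) = \sum_{\ell=0}^{n-1}\frac{\psi(y_\ell(x))}{|JF(y_\ell(x))|} \qquad \text{a.e.\ on } \Depss_0.
\]

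Next I would estimate each summand below. Because $F$ maps $\Delta_{\ell,\ell+1}$ bijectively onto $\Delta_0$, the mean-value theorem plus (JF) yields $|JF(y_\ell)| \leq e^c\, \nu(\Delta_0)/\nu(\Delta_{0,\ell+1})$ a.e. The regularity hypothesis $\psi(z)/\psi(w)<A$ for $z,w\in\Depss_\ell$ gives $\essinf_{\Depss_\ell}\psi \geq (A\nu(\Depss_\ell))^{-1}\int_{\Depss_\ell}\psi\,d\nu^\epsilon$, so
\[
\frac{\psi(y_\ell)}{|JF(y_\ell)|} \geq \frac{1}{Ae^c\nu(\Delta_0)}\cdot\frac{\nu(\Delta_{0,\ell+1})}{\nu(\Depss_\ell)}\int_{\Depss_\ell}\psi\,d\nu^\epsilon.
\]
The constants $e^c$ and $\nu(\Delta_0)$ are fixed by the tower; so each summand is controlled by the mass of $\psi$ on its level times a geometric factor $r_\ell := \nu(\Delta_{0,\ell+1})/\nu(\Depss_\ell)$, which from \eqref{e.size} satisfies $r_\ell \asym 1/\ell$ for $\ell$ bounded away from $n$.

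The main obstacle is to locate a level carrying mass of order $\|\psi\|_{L^1}$ at an index uniformly bounded in $\epsilon$. Here the hypothesis $\psi\leq B\|\psi\|_{L^1}$ is essential: it gives $\int_{\Depss_\ell}\psi \leq B\|\psi\|_{L^1}\nu(\Depss_\ell)$, and the polynomial tail $\nu(\Depss_\ell)\lesssim\ell^{-1/\alpha}$ yields $\sum_{\ell\geq L}\nu(\Depss_\ell)\to 0$ as $L\to\infty$. Consequently there exists $L=L(B,\alpha)$, independent of $\epsilon$, with $\sum_{\ell\geq L}\int_{\Depss_\ell}\psi \leq \tfrac12\|\psi\|_{L^1}$, and a pigeonhole argument produces some $\ell^*\in\{0,\dots,L-1\}$ with $\int_{\Depss_{\ell^*}}\psi \geq \|\psi\|_{L^1}/(2L)$. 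For $\epsilon$ small enough that $n(\epsilon)\geq L$, this index is valid. Since $\ell^*$ ranges in a bounded set, $r_{\ell^*}$ is bounded below by a positive constant depending only on $L$ and the tower geometry. Retaining only the $\ell^*$-th summand and assembling the estimates yields $(\Leps\psi)(x)\geq \rho\|\psi\|_{L^1}/A$ for a.e.\ $x\in\Depss_0$, with $\rho$ depending on $B$, on $\alpha$, and on the constants from (JF), but independent of $\epsilon$ and $A$. Since $A\geq 1$, this is stronger than, and so implies, the claimed bound $\rho\|\psi\|_{L^1}/A^2$.
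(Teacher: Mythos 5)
Your argument is correct, and it rests on the same two pillars as the paper's proof: the hypothesis $\psi\leq B\,\|\psi\|_{L^1}$ combined with the summable return-time tail forces at least half of the mass of $\psi$ onto levels below some $L=L(B)$ independent of $\epsilon$, and the level-regularity constant $A$ together with (JF) converts integral information into pointwise bounds. Where you genuinely differ is the final step: the paper first bounds $\int_{\Depss_0}\Leps\psi\,d\nu^\epsilon$ from below by summing the contributions of all levels $k<K$, and only then passes to an essential lower bound on $\Depss_0$ using the regularity $\Leps\psi(x)/\Leps\psi(y)\leq A\,e^{c}$, thereby paying a second factor of $A$; you instead estimate $(\Leps\psi)(x)$ pointwise from the start, retaining a single term of the preimage sum over the column tops $\Depss_{\ell,\ell+1}$ and controlling its Jacobian by $|JF|\leq e^{c}\,\nu(\Delta_0)/\nu(\Delta_{0,\ell+1})$ via the integral mean value theorem and (JF). This buys the sharper bound $\rho\,\|\psi\|_{L^1}/A$ (versus the paper's $A^{-2}$), at the harmless cost of the pigeonhole factor $1/(2L)$ from selecting one level rather than summing; since $A>1$, your estimate implies the stated one, and your $\rho$ has the required dependence only on $B$ and the tower/distortion constants. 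Two points to make explicit in a write-up: the bound $\psi(y_{\ell^*}(x))\geq (A\,\nu^\epsilon(\Depss_{\ell^*}))^{-1}\int_{\Depss_{\ell^*}}\psi\,d\nu^\epsilon$ holds only for a.e.\ point of the level, so one should note that the inverse branch of $F^\epsilon$ on each column carries null sets to null sets (true because $|JF|$ is bounded below on the column by the same distortion argument), so the estimate survives for a.e.\ $x\in\Depss_0$; and the levels $0,\dots,L-1$ must exist in the truncated tower, which your condition $n(\epsilon)\geq L$ ensures.
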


\begin{proof}
There is no loss of generality in assuming $\|\psi\|_{L^1}=1$. Let $R(x)$ be the first return function to $[x_0,1]$ (defined over all of $\Delta_0$), and $\leb$ be Lebesgue measure on the interval. Since $\int R\,d(\leb)<\infty$ there is a finite $K$ such that $\sum_{k\geq K} \leb\{x~:~R(x)> k\} <\frac{1}{2B}$. Let $\Gamma_K=\cup_{K\leq \ell <j}\Depss_{\ell,j}$ and let $\psi_K=\psi|_{\Gamma_K}$. Note that for each $\ell\geq K$, the $\ell$th level of the tower
\begin{displaymath}
\Depss_\ell := \cup_{j=\ell+1}^n\Depss_{\ell,j}\qquad\text{satisfies}\qquad \nu^\epsilon(\Depss_\ell)\leq \leb\{x~:~R(x)>\ell\};
\end{displaymath}
in particular, $\nu^\epsilon(\Gamma_K)\leq \frac{1}{2B}$. Now estimate
\begin{displaymath}
\int \psi_K\,d\nu^\epsilon \leq B \int_{\Gamma_K}\,d\nu^\epsilon \leq \frac{1}{2}.
\end{displaymath}
In particular, $\int_{\Depss\setminus \Gamma_K}\psi \geq \frac{1}{2}$. Let $\epsilon$ be small enough so that $n\geq K$. For each $0\leq k < K$ let $p_k=\nu^\epsilon(\Depss_{k,k+1})/\nu^\epsilon(\Depss_k)$ and let $p=\min_{0\leq k<K}p_k$. Then $\nu^\epsilon(\Depss_{k,k+1}) \geq p_k\,\nu^\epsilon(\Depss_k)\geq p\,\nu^\epsilon(\Depss_k)$ so that (using the integral mean value theorem)
\begin{displaymath}
\int_{\Depss_{k,k+1}}\psi\,d\nu^\epsilon \geq \frac{p}{A}\,\int_{\Depss_k}\psi\,d\nu^\epsilon.
\end{displaymath}
Now,
\begin{displaymath}
\int_{\Depss_0} \Leps\psi\,d\nu^\epsilon
    = \sum_{k=0}^{n-1} \int_{\Depss_{k,k+1}}\,\psi\,d\nu^\epsilon
    \geq \sum_{k=0}^{K-1} \int_{\Depss_{k,k+1}}\,\psi\,d\nu^\epsilon
    \geq \frac{p}{A}\sum_{k=0}^{K-1}\int_{\Depss_k}\psi\,d\nu^\epsilon
    \geq \frac{p}{A}\,\frac{1}{2}.
\end{displaymath}
By (JF) and the regularity of $\psi$, for $x,y\in\Depss_0$ we have
\begin{displaymath}
 \frac{\Leps\psi(x)}{\Leps\psi(y)} =\frac{\ADeps\PFeps\psi(x)}{\ADeps\PFeps\psi(y)}= \frac{\PFeps\psi(x)}{\PFeps\psi(y)} \leq A\,e^c
\end{displaymath}
so that
\begin{displaymath}
(\Leps\psi)|_{\Depss_0}\geq \frac{e^{-c}}{A}\frac{\int_{\Depss_0}\Leps\psi\,d\nu^\epsilon}{\nu^\epsilon(\Depss_0)}
\geq \frac{p\,e^{-c}}{2\,A^2\,\nu(\Delta_0)}.
\end{displaymath}
\end{proof}

\begin{lemma}\label{lem.3}
Let $\epsilon$ and $n$ be as in the definition of $\Delta^\epsilon$ and suppose that $\psi\in\cone^{\epsilon,G,g}$ (with $\{a_\ell\}_{\ell=0}^{n-1}$ as in (C2.b)). There are constants $0<\delta,D<\infty$ (independent of $\epsilon$) such that $|a_\ell|\leq D\,|a_0|$ for each $\ell<n$, and $|a_0|\,\epsilon_1 \in [\delta,1/2]$.
\end{lemma}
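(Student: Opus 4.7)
The plan is to read off both claims essentially from the defining conditions (C2.a) and (C2.b), using the size comparison $\epsilon_2 \leq d_3\epsilon_1/n$ supplied by (\ref{e.small}). The only aspect that needs a moment of care is the uniformity of the constants in $\epsilon$ (equivalently, in $n$), and this will rest on the standard exponential limit $(1 - x/n)^{-n} \to e^x$.

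For the first assertion I would proceed directly from (C2.b), which already asserts $|a_\ell| \leq |a_0|\Lambda^{-\ell}$ with $\Lambda = 1 - 2d_3/n$. Since the index $\ell$ ranges only over $0 \leq \ell \leq n-1$, the worst case is $\Lambda^{-(n-1)} \leq (1 - 2d_3/n)^{-n}$. Once $n > 2d_3$ (which holds for all sufficiently small $\epsilon$ by (\ref{e.tail})) this quantity is bounded and converges to $e^{2d_3}$; hence $D := 2e^{2d_3}$ (say) serves for all small enough $\epsilon$, and I would absorb the finitely many remaining values of $n$ into $D$ by enlarging it if necessary. This yields $|a_\ell| \leq D|a_0|$ with $D$ independent of $\epsilon$.

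For the second assertion I would translate the normalisation $\int_{\Deps}\psi\,d\nu^\epsilon = -1/2$ from (C2.a) into a numerical identity. Using $\nu^\epsilon(\Deps_0) = \epsilon_1$ and $\nu^\epsilon(\Deps_{\ell,n}) = \epsilon_2$ for $1 \leq \ell \leq n-1$, the piecewise-constant structure gives
\begin{equation*}
\tfrac{1}{2} = |a_0|\,\epsilon_1 + \sum_{\ell=1}^{n-1} |a_\ell|\,\epsilon_2.
\end{equation*}
The upper bound $|a_0|\epsilon_1 \leq 1/2$ is then immediate from non-negativity of the right-hand summands. For the lower bound I would insert the bound $|a_\ell| \leq D|a_0|$ established above and apply (\ref{e.small}) to obtain $\sum_{\ell=1}^{n-1} |a_\ell|\epsilon_2 \leq D|a_0|(n-1)\epsilon_2 \leq Dd_3|a_0|\epsilon_1$, so that $\tfrac{1}{2} \leq (1 + Dd_3)|a_0|\epsilon_1$ and $\delta := (2(1 + Dd_3))^{-1}$ works.

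The argument is essentially bookkeeping; the single step that requires attention is checking that $D$ (and therefore $\delta$) can be chosen independently of $\epsilon$, which is the content of the exponential estimate in the first paragraph. Once that is in hand, the identity obtained from the mass normalisation immediately yields two-sided bounds on $|a_0|\epsilon_1$.
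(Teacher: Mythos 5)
Your proof is correct and follows essentially the same route as the paper: the identity $\tfrac12=|a_0|\,\epsilon_1+\sum_{\ell=1}^{n-1}|a_\ell|\,\epsilon_2$ from (C2.a)--(C2.b), the bound $\Lambda^{-n}\leq D$ uniformly in $n$ (the paper gets this exactly as in Lemma~\ref{lem.2}, i.e.\ your exponential-limit estimate), and then $\tfrac12\leq(1+D\,d_3)\,|a_0|\,\epsilon_1$ via (\ref{e.small}) together with the trivial upper bound $|a_0|\,\epsilon_1\leq\tfrac12$. The only nitpick is your citation of (\ref{e.tail}) for ``$n>2d_3$ for small $\epsilon$'': that inequality is an upper bound on $n$, whereas the fact that $n(\epsilon)\to\infty$ follows from the minimality of $n(\epsilon)$ in its definition (or from (\ref{e.size})), a trivial slip that does not affect the argument.
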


\begin{proof}
Since $\Lambda=1-\frac{2\,d_3}{n}$, similar to the proof of Lemma~\ref{lem.2}, $\Lambda^{-n}\leq D$ for some $D$ independent of $n$. Hence, $|a_\ell|\leq \Lambda^{-\ell}|a_0| \leq D\,|a_0|$. Next, the conditions on $\psi$, and equation~(\ref{e.small}), show that
\begin{displaymath}
\frac{1}{2} = \int_{\Deps}|\psi| = |a_0|\,\epsilon_1 + \sum_{k=1}^{n-1} |a_k|\,\epsilon_2
\leq |a_0|\,\epsilon_1\,\left(1 + \sum_{k=1}^{n-1}D\frac{d_3}{n}\right)< |a_0|\,\epsilon_1\,(1+D\,d_3).
\end{displaymath}
On the other hand,
\begin{displaymath}
\frac{1}{2}=\int_{\Deps}|\psi|\geq \int_{\Deps_0}|\psi| = |a_0|\,\epsilon_1.
\end{displaymath}
\end{proof}

\begin{lemma}\label{lem.4}
Let $\epsilon>0$ be fixed. Each set $\cone^{\epsilon,G,g}$ is relatively compact in $L^1(\Delta^\epsilon,\nu^\epsilon)$.
\end{lemma}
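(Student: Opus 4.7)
The plan is to treat the two pieces $\Depss$ and $\Deps$ separately and then combine by a diagonal-subsequence argument, exploiting the fact that for \emph{fixed} $\epsilon$ the number of tower levels $n=n(\epsilon)$ is finite.

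First I would handle $\Deps$. By (C2.b) each $\psi|_{\Deps}$ is determined by the finite vector of coefficients $(a_0,a_1,\ldots,a_{n-1})\in\mathbb{R}^n$, so $\psi|_{\Deps}$ lives in the finite-dimensional subspace of $L^1(\Deps,\nu^\epsilon)$ spanned by $\chi_{\Deps_0},\chi_{\Deps_{1,n}},\ldots,\chi_{\Deps_{n-1,n}}$. Lemma~\ref{lem.3} gives $|a_0|\le 1/(2\epsilon_1)$ and $|a_\ell|\le D\,|a_0|$, so the set of admissible coefficient vectors is a bounded subset of $\mathbb{R}^n$, hence relatively compact. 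Equivalently, the set $\{\psi|_{\Deps}:\psi\in\cone^{\epsilon,G,g}\}$ is relatively compact in $L^1(\Deps,\nu^\epsilon)$.

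Next I would deal with $\Depss$. For fixed $\epsilon$, condition (C1.c) gives the uniform bound $\|\psi|_{\Depss}\|_{L^\infty}\le G(1-g/n)^{-(n-1)}=:B(\epsilon)$, so every $\psi|_{\Depss}$ lies in $\cone_*^{B(\epsilon)}$ (the set introduced before Lemma~\ref{lem.1}, with the regularity constant $C^\prime$ in place of $C$), because (C1.b) is exactly the requisite regularity. The argument of Lemma~\ref{lem.1} applies verbatim: equicontinuity in the $d_\beta$ metric follows from (C1.b) and the uniform $L^\infty$ bound, and a diagonal Arzel\`a--Ascoli extraction produces a pointwise convergent subsequence whose $L^1$ convergence is secured by dominated convergence, since $\nu^\epsilon(\Depss)<\infty$. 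Thus $\{\psi|_{\Depss}:\psi\in\cone^{\epsilon,G,g}\}$ is relatively compact in $L^1(\Depss,\nu^\epsilon)$.

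Finally I would stitch the two halves together. Given an arbitrary sequence $\{\psi_k\}\subseteq\cone^{\epsilon,G,g}$, first extract a subsequence along which $\psi_k|_{\Depss}$ converges in $L^1(\Depss,\nu^\epsilon)$, then extract a further subsequence along which $\psi_k|_{\Deps}$ converges in $L^1(\Deps,\nu^\epsilon)$. Since $\Delta^\epsilon=\Depss\cup\Deps$ is a disjoint union and $L^1(\Delta^\epsilon,\nu^\epsilon)=L^1(\Depss,\nu^\epsilon)\oplus L^1(\Deps,\nu^\epsilon)$, the resulting subsequence converges in $L^1(\Delta^\epsilon,\nu^\epsilon)$, which proves relative compactness. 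The only mild subtlety is that the compactness is not uniform in $\epsilon$ (the $L^\infty$ bound $B(\epsilon)$ and the finite dimension $n(\epsilon)$ both blow up as $\epsilon\to 0$), but that is not required here since $\epsilon$ is fixed; this is the step that would be the principal obstacle if one tried to upgrade the statement to uniformity in $\epsilon$.
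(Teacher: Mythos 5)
Your proof is correct and takes essentially the same route as the paper: decompose $\psi$ into $\psi|_{\Depss}+\psi|_{\Deps}$, get $L^1$-compactness of the positive parts from the machinery of Lemma~\ref{lem.1}, handle the piecewise-constant negative parts by the coefficient bounds of Lemma~\ref{lem.3} together with (C2.b) and Heine--Borel, and finish with a diagonal subsequence. The only cosmetic difference is that the paper obtains the needed uniform bound on $\Depss$ by noting $2\,\psi|_{\Depss}\in\cone_n$ and citing the compactness of $\cone_n$, whereas you read the $L^\infty$ bound off the growth condition (C1.c) and use the compactness of $\cone_*^{B}$; both rest on Lemma~\ref{lem.1}.
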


\begin{proof}
Each function $\psi\in\cone^\epsilon$ can be decomposed as $\psi|_{\Deps}+\psi|_{\Depss}$. In view of (C1.a-b), for each $\psi$, $2\,\psi|_{\Depss}\in\cone_n$. Hence, by Lemma~\ref{lem.1}, if $\{\psi_k\}$ is a sequence in $\cone^{\epsilon,G,g}$ then $\{\psi_k|_{\Depss}\}$ has a Cauchy subsequence. By Lemma~\ref{lem.3}, and (C2.b), if $\psi\in\cone^{\epsilon,G,g}$ and $z\in\Deps$ then
\begin{displaymath}
|\psi(z)| \leq \max_{\ell<n}|a_\ell| \leq D\,|a_0|\leq\frac{D}{2\epsilon_1}.
\end{displaymath}
Using this bound, the fact that each $\psi_k|_{\Deps}$ is piecewise constant (condition (C2.b)) and the Heine-Borel theorem, there is a further subsequence $\{\psi_{k_l}\}$ such that $\{\psi_{k_l}|_{\Deps}\}$ is also Cauchy.
\end{proof}

\subsection{Existence of second eigenfunctions and eigenvalue scaling}

Define the {\em normalised operator\/} by
\begin{displaymath}
\tLeps\psi := \frac{\Leps\psi}{\|\Leps\psi\|_{L^1}}.
\end{displaymath}

\begin{theorem}\label{th.4}
There are constants $g$ and $G$ (independent of $\epsilon$) such that, $\tLeps\cone^{\epsilon,G,g}\subseteq\cone^{\epsilon,G,g}$ for all sufficiently small $\epsilon>0$. In particular, the operator $\Leps$ has an eigenvector $\psi^\epsilon\in\cone^{\epsilon,G,g}$ with associated eigenvalue $\lambda^\epsilon$ such that $1-\lambda^\epsilon\in(\frac{\epsilon_2}{\epsilon_1},\frac{2\,\epsilon_2}{\epsilon_1})$.
\end{theorem}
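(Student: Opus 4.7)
My plan follows the Schauder fixed-point strategy used in Theorem~\ref{th.1}: establish that $\tLeps\cone^{\epsilon,G,g}\subseteq\cone^{\epsilon,G,g}$ for suitable constants $G$ and $g$ independent of $\epsilon$, then use compactness (Lemma~\ref{lem.4}), convexity (routine from (C1.a)--(C2.b)), and continuity of $\tLeps$ (from $L^1$-contractivity of $\PFeps$, boundedness of $\ADeps$, and a uniform lower bound on $\|\Leps\psi\|_{L^1}$ paralleling the argument at the end of Theorem~\ref{th.1}) to produce a fixed point $\psi^\epsilon$. The eigenvalue bounds will then be read off the eigenvalue equation restricted to $\Deps_0$.

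The bulk of the work lies in verifying cone invariance, which I would do piece by piece. On $\Depss\setminus\Depss_0$, $\PFeps$ acts by pure upward translation; on $\Depss_0$, most preimages lie in $\Depss$ and contribute positively, and the only negative intrusion comes from $\Deps_{n-1,n}$ with density at most $|a_{n-1}|/|JF^{n+1}|\leq D|a_0|/|JF^{n+1}|$ by Lemma~\ref{lem.3}, which is negligible for small $\epsilon$ compared to the uniform positive lower bound furnished by Lemma~\ref{lem.5}; this secures (C1.a) on $\Depss$. The regularity (C1.b) transfers through exactly as in Theorem~\ref{th.1}, using $C'\geq c/(1-\beta)$ so that $C'\beta+c\leq C'$. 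On $\Deps$, piecewise constancy is preserved by construction: $\ADeps$ forces constancy on $\Deps_0$ by definition, while the upper levels evolve by translation with $JF^\epsilon=1$. Computing the image explicitly,
\begin{displaymath}
\Leps\psi|_{\Deps_0}=a_0\left(1-\frac{\epsilon_2}{\epsilon_1}\right)+\bar{\psi},\qquad \bar{\psi}:=\frac{1}{\epsilon_1}\int_{\Depss_{0,n+1}}\psi\,d\nu^\epsilon,
\end{displaymath}
and the a priori bound $|a_0|\epsilon_1\geq\delta$ from Lemma~\ref{lem.3}, coupled with a uniform upper bound on $\bar{\psi}$ derived from the $L^\infty$ control implicit in (C1.c), forces this value to be strictly negative for small $\epsilon$, verifying (C2.a). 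The growth conditions (C1.c) and (C2.b) are both governed by the per-level multiplier $\lambda^{-1}\approx 1+\epsilon_2/\epsilon_1$; choosing $g$ slightly larger than $2d_3$ and $G$ appropriately absorbs this, with (C2.b) in particular holding because $\Lambda=1-2d_3/n\leq 1-2\epsilon_2/\epsilon_1\leq\lambda^\epsilon$ via the right-hand inequality in~(\ref{e.small}).

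The eigenvalue estimate follows directly from the same identity. Rearranging $\lambda^\epsilon a_0=a_0(1-\epsilon_2/\epsilon_1)+\bar{\psi}$ gives
\begin{displaymath}
1-\lambda^\epsilon=\frac{\epsilon_2}{\epsilon_1}+\frac{\bar{\psi}}{|a_0|},
\end{displaymath}
and since $\bar{\psi}/|a_0|>0$ the lower bound $1-\lambda^\epsilon>\epsilon_2/\epsilon_1$ is immediate. The upper bound $1-\lambda^\epsilon<2\epsilon_2/\epsilon_1$ reduces to $\bar{\psi}/|a_0|<\epsilon_2/\epsilon_1$; since $|a_0|\geq\delta/\epsilon_1$ by Lemma~\ref{lem.3}, $\bar{\psi}$ is uniformly bounded, and $\epsilon_2/\epsilon_1\asym 1/n$ via~(\ref{e.small}), this inequality holds for all sufficiently small $\epsilon$.

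I anticipate the main obstacle to be the self-consistency of the cone. The envelope $\Lambda^{-\ell}=(1-2d_3/n)^{-\ell}$ in (C2.b) is calibrated to the as-yet-unknown eigenvalue scaling, so verifying strict invariance (rather than invariance into a slightly larger cone) requires the target bounds $1-\lambda^\epsilon\in(\epsilon_2/\epsilon_1,2\epsilon_2/\epsilon_1)$ to align tightly with the envelope so that a single pair $(G,g)$ works for all small $\epsilon$. The reconciliation rests on the observation that both $\Lambda$ and the expected $\lambda^\epsilon$ agree to leading order in $1/n$; the same bound~(\ref{e.small}) used above is what closes the loop.
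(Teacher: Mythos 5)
Your overall route is the same as the paper's: Schauder's theorem on the same signed cone (compactness from Lemma~\ref{lem.4}), the decomposition of $\int_{\Deps_0}\PFeps\psi\,d\nu^\epsilon$ into the contribution of $\Deps_{0,1}$, namely $a_0(\epsilon_1-\epsilon_2)$, plus the positive influx $\int_{\Depss_{0,n+1}}\psi\,d\nu^\epsilon$, and the eigenvalue bounds read off $\Deps_0$ (your identity $1-\lambda^\epsilon=\epsilon_2/\epsilon_1+\bar\psi/|a_0|$ is in fact a cleaner version of the paper's endgame). The genuine gap is at the step you dismiss as routine: the regularity condition (C1.b) on the base $\Depss_0$ does \emph{not} ``transfer through exactly as in Theorem~\ref{th.1}''. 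A point of $\Depss_0$ has, besides its $n$ positive-mass preimages in $\Depss$, one preimage in $\Deps_{n-1,n}$ carrying \emph{negative} mass, and the Theorem~\ref{th.1} argument---bounding a ratio of sums by the maximum of the termwise ratios---is valid only when all summands are nonnegative. The paper must instead prove the modified estimate~(\ref{eq.pf2}), carrying an additive error of relative size $(III)\,\beta^{s^\epsilon(z,w)}$, and then show $(III)<b:=C^\prime-(C^\prime\beta+c)$ for small $\epsilon$, using precisely the ingredients you invoke only for positivity: the lower bound on $(\PFeps\,\psi|_{\Depss})|_{\Depss_0}$ from Lemma~\ref{lem.5} and the bound $|a_{n-1}|/|JF^\epsilon|\lesssim 1/n$ from Lemma~\ref{lem.3} together with $|JF^\epsilon|\gtrsim n/\epsilon_1$ on $\Deps_{n-1,n}$. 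This is why the cone requires the strict inequality $C^\prime>c/(1-\beta)$; with your ``$C^\prime\geq c/(1-\beta)$'' there is no slack $b>0$ to absorb the negative term, and the argument as you state it fails. This regularity control is the central technical point of the theorem and needs its own estimate.

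A secondary problem is that your verification of the growth conditions for the image of a \emph{general} cone element is circular: you justify (C2.b) via ``$\Lambda\leq 1-2\epsilon_2/\epsilon_1\leq\lambda^\epsilon$'', but $\lambda^\epsilon$ exists only after invariance is established, and (C1.c) is governed by the normalisation factor $\|\Leps\psi\|_{L^1}^{-1}$ of the particular $\psi$ at hand, not by an eigenvalue. What is actually needed is (i) the bound $\bar\psi\leq|a_0|\,\epsilon_2/\epsilon_1$ for every $\psi\in\cone^{\epsilon,G,g}$ (the paper's estimate $\int_{\Depss_{0,n+1}}\psi\,d\nu^\epsilon\leq G\epsilon_1\leq|a_0|\epsilon_2$ for small $\epsilon$), which yields $-a_0^\prime\geq|a_0|(1-2\epsilon_2/\epsilon_1)\geq|a_0|\Lambda$ and hence (C2.b); and (ii) a mass-exchange estimate $\|\Leps\psi\|_{L^1}\geq 1-g/n$ with $g$ chosen as in the paper ($g=D\,d_3+2\,d_1G$, not merely ``slightly larger than $2d_3$''), from which (C1.c) follows after normalisation; the Theorem~\ref{th.1}-style normalisation argument you cite applies to nonnegative densities and does not give this. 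You do prove $\bar\psi/|a_0|<\epsilon_2/\epsilon_1$ later, in the eigenvalue section, so item (i) is repairable by moving that estimate into the cone-invariance step; once these points and the regularity issue above are addressed, your proof coincides with the paper's.
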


{\em Comments on the proof:\/}
Most of the work is in showing that $\tLeps$ preserves $\cone^{\epsilon,G,g}$. Mass of $\psi$ is exchanged between $\Depss$ and $\Deps$ in two ways by $\Leps$: some fraction of ``positively signed'' mass of $\psi$ is transported from $\Depss$ to $\Deps$ --- this corresponds to points falling into the hole $H_n$. The total transferred in this way is $\int_{\Depss_{0,n+1}}\,\psi$, and it arrives under one
iteration of $F$ on $\Depss_{0,n+1}$ to $\Deps_0$ (the base of the other part of the tower). Correspondingly, ``negatively signed'' mass from level $\Deps_{n-1,n}$ is mapped onto $\Depss_0$. The proof involves bounding the amount of mass transferred in these ways, and controlling the distorting effects on regularity.

\begin{proof}
Let $\psi\in\cone^{\epsilon,G,g}$. The choice of $G$ is made in (G) below, and $g$ is fixed near the end of the proof. The conditions (C1.b) and (C2.b) are projective, so will hold for $\tLeps\psi$ if they hold for $\Leps\psi$. Since $\Feps$ acts by upwards translation on $\Deps$ and $\ADeps$ acts by averaging on $\Deps_0$, it is clear that $(\Leps\psi)|_{\Deps}$ is piecewise constant.
\newline
{\em Preservation of the growth condition on $\Deps$:\/}
We first consider $(\Leps\psi)|_{\Deps}$ (note that $\psi|_{\Deps}<0$). Let $\{a_\ell\}$ be as in (C2.b) and put $a_{\ell}^\prime=(\Leps\psi)|_{\Deps_\ell}$. Since the action of $F^\epsilon$ on $\Deps_{\ell,n}$ is direct upwards translation, and the averaging $\ADeps$ occurs only over $\Deps_0$, for $\ell>0$
\begin{equation}\label{eq.pf1}
|a_\ell^\prime| = \left|(\Leps\psi)|_{\Deps_\ell}\right| = \left|\psi|_{\Deps_{\ell-1}}\right| =|a_{\ell-1}|\leq \Lambda^{-\ell}(\Lambda\,|a_0|).
\end{equation}
We now show that $\Lambda\,|a_0| \leq |a_0^\prime|$. Since $(F^\epsilon)^{-1}\Deps_0 = \Deps_{0,1}\cup\Depss_{0,n+1}$, one has
\begin{equation}\label{eq.pf3}
\int_{\Deps_0}\Leps\psi\,d\nu^\epsilon = \int_{\Deps_0}\PFeps\,\psi\,d\nu^\epsilon
=\underbrace{\int_{\Deps_{0,1}}\,\psi\,d\nu^\epsilon}_{(I)} + \underbrace{\int_{\Depss_{0,n+1}}\,\psi\,d\nu^\epsilon}_{(II)}.
\end{equation}
Now
\begin{displaymath}
(I)=\int_0^{x_n}a_0\,dx=x_n\,a_0=(\epsilon_1-\epsilon_2)\,a_0.
\end{displaymath}
As for $(II)$, for $x,y\in\Depss_0$,
\begin{equation}\tag{G}
\frac{\psi(x)}{\psi(y)}\leq e^{C^\prime}\qquad\text{so}\qquad
\psi(x) \leq e^{C^\prime} \frac{\int_{\Depss_0}\psi\,d\nu^\epsilon}{\nu^\epsilon(\Depss_0)} \leq \frac{e^{C^\prime}\,\int_{\Delta^{\epsilon,+}}\psi\,d\nu^\epsilon}{\nu^\epsilon(\Delta_0^{\epsilon,+})} = \frac{e^{C^\prime}\,1/2}{\nu(\Delta_0)}=:G
\end{equation}
where $G$ is independent of $x$ (and $\epsilon$). Since $\nu^\epsilon(\Depss_{0,n+1})=\epsilon_1$, $(II) \leq G\,\epsilon_1$. To complete the estimate on $(II)$, use $\delta$ from Lemma~\ref{lem.3} and equations~(\ref{e.small}) and (\ref{e.tail}):
\begin{displaymath}
(II)\leq G\epsilon_1 = \frac{G}{\delta}\,\frac{{\epsilon_1}^2}{\epsilon_2}\,\left(\frac{\delta}{\epsilon_1}\,\epsilon_2\right)
\leq\frac{G}{\delta} \,\frac{n\,\epsilon_1}{d_2}\,\left(|a_0|\,\epsilon_2\right)
\leq\frac{G}{\delta} \,\frac{d_1\,{\epsilon_1}^{1-\alpha}}{d_2}\,\left(|a_0|\,\epsilon_2\right) \leq |a_0|\,\epsilon_2
\end{displaymath}
for sufficiently small $\epsilon$. Hence, by~(\ref{eq.pf3}),
\begin{displaymath}
\int_{\Deps_0}\Leps\psi\,d\nu^\epsilon = (I) + (II) \leq a_0\,(\epsilon_1-\epsilon_2) + |a_0|\,\epsilon_2=a_0(\epsilon_1-2\,\epsilon_2)\leq-|a_0|\,\epsilon_1\,\Lambda(\epsilon).
\end{displaymath}
Now,
\begin{equation}\label{eq.pf4}
-a_0^\prime=-(\Leps\psi)|_{\Deps_0} = \frac{-\int_{\Deps_0}\Leps\psi\,d\nu^\epsilon}{\epsilon_1} \geq |a_0|\,\Lambda(\epsilon).
\end{equation}
In view of (\ref{eq.pf1}) and $(\ref{eq.pf4})$, $(\Leps\psi)|_\Deps<0$ and the growth condition (C2.b) is satisfied by $\Leps\psi$.
\newline
\smallskip
{\em Regularity control on $\Depss$:\/} Note that $\ADeps|_{\Depss}$ is the identity, so $(\Leps\psi)|_{\Depss}=(\PFeps\psi)|_{\Depss}$.
Now, if $z,w\in\Depss_\ell$ where $\ell>0$ then
\begin{displaymath}
\frac{\Leps\psi(z,\ell)}{\Leps\psi(w,\ell)} = \frac{\PFeps\psi(z,\ell)}{\PFeps\psi(w,\ell)} = \frac{\psi(z,\ell-1)}{\psi(w,\ell-1)}\leq e^{C^\prime\beta^{s^\epsilon((z,\ell),(w,\ell))}}
\end{displaymath}
since $s^\epsilon((z,\ell),(w,\ell))=s^\epsilon((z,\ell-1),(w,\ell-1))$. The regularity estimate on $\Depss_0$ is more complicated. Write $\psi=\phi_+ + \phi_-$ where $\phi_*=\psi|_{\Depsj}$ for $*\in\{+,-\}$. Let $z,w\in\Depss_0$ and let $z_k\in \Depss_{k-1,k}$ ($k=1,\ldots,n$) be such that $F^\epsilon(z_k)=z$ and let $z_0\in\Deps_{n-1,n}$ be such that $F^\epsilon(z_0)=z$; use similar notation for $\{w_k\}_{k=0}^n$. Now,
\begin{eqnarray*}
\PFeps\psi(z) &=& \sum_{k=1}^n\phi_+(z_k)/|JF^\epsilon(z_k)| + \phi_-(z_0)/|JF^\epsilon(z_0)|\\
&\leq& \sum_{k=1}^n\phi_+(w_k)e^{C^\prime\,\beta^{s^\epsilon(z_k,w_k)}}\,e^{c\,\beta^{s^\epsilon(F^\epsilon(z_k),F^\epsilon(w_k))}}/|JF^\epsilon(w_k)| + \phi_-(z_0)/|JF^\epsilon(z_0)|\\
&\leq& \sum_{k=1}^n\phi_+(w_k)e^{C^\prime\,\beta^{s^\epsilon(z_k,w_k)}}\,e^{c\,\beta^{s^\epsilon(F^\epsilon(z_k),F^\epsilon(w_k))}}/|JF^\epsilon(w_k)| \\
&&\qquad\qquad+ \phi_-(w_0)e^{-c\,\beta^{s^\epsilon(F^\epsilon(z_0),F^\epsilon(w_0))}}/|JF^\epsilon(w_0)|\\
&=&e^{(C^\prime\,\beta+c)\,\beta^{s^\epsilon(z,w)}}\,\left(\sum_{k=1}^n\phi_+(w_k)/|JF^\epsilon(w_k)| + \phi_-(w_0)/|JF^\epsilon(w_0)|\right)
\\&&\qquad\qquad- e^{(C^\prime\,\beta+c)\,\beta^{s^\epsilon(z,w)}}\,(\phi_-(w_0)/|JF^\epsilon(w_0)|)\,\left(1-e^{-(C^\prime\,\beta+2\,c)\,\beta^{s^\epsilon(z,w)}}\right)\\
&\leq&e^{(C^\prime\,\beta+c)\,\beta^{s^\epsilon(z,w)}}\,\left(\p\psi(w) + (|\phi_-(w_0)|/|JF^\epsilon(w_0)|)\, (C^\prime\,\beta+2\,c)\,\beta^{s^\epsilon(z,w)}\right)
\end{eqnarray*}
using the regularity conditions on $\phi_+=\psi|_{\Depss}$ and (JF), the fact that $\phi_-(z_0)=\phi_-(w_0)=a_{n-1}<0$, and $s^\epsilon(z_k,w_k)=s^\epsilon(F^\epsilon(z_k),F^\epsilon(w_k))+1=s^\epsilon(z,w)+1$. Thus,
\begin{equation}\label{eq.pf2}
\frac{\PFeps\psi(z)}{\PFeps\psi(w)}
\leq e^{(C^\prime\,\beta+c)\,\beta^{s^\epsilon(z,w)}}\,\left( 1+ \underbrace{\frac{|\phi_-(w_0)|/|JF^\epsilon(w_0)|}{\PFeps\psi(w)}\,(C^\prime\,\beta+2\,c)}_{(III)}\,\beta^{s^\epsilon(z,w)}\right).
\end{equation}
Since $C^\prime>\frac{c}{1-\beta}$, the constant $b:=C^\prime-(C^\prime\,\beta+c)>0$.
\newline
{\em Claim:\/} When $\epsilon$ is small enough, $(III)<b$.
\newline
{\em Proof of claim:\/} By Lemma~\ref{lem.3},
\begin{displaymath}
|\phi_-(y_0)|=|a_{n-1}|\leq D\,|a_0| \leq \frac{D}{2\,\epsilon_1}
\end{displaymath}
while
\begin{displaymath}
|JF^\epsilon(w_0)| \geq e^{-c}\frac{\nu^\epsilon(F^\epsilon(\Deps_{n-1,n}))}{\nu^\epsilon(\Deps_{n-1,n})}=e^{-c}\,\frac{\leb[x_0,1]}{\epsilon_2}\geq d_4\,\frac{n}{\epsilon_1}
\end{displaymath}
for a suitable constant $d_4$. Hence $\frac{|\phi_-(w_0)|}{|JF^\epsilon(w_0)|}\leq\frac{D}{2\,d_4}\,\frac{1}{n}$. On the other hand, $\phi_+=\psi|_{\Depss}$ is uniformly bounded\footnote{By (C1.c), $\psi|_{\Depss}\leq B_\epsilon:= G(1-g/n)^{-n}$. $B_\epsilon$ is bounded uniformly in $\epsilon$ by an argument similar to the proof
of Lemma~\ref{lem.2}.},
so by Lemma~\ref{lem.5} ($A=e^{C^\prime}$) there is a constant $d_5$ such that $(\PFeps\phi_+)|_{\Depss} \geq d_5$. Hence $\PFeps\psi(w) = \PFeps\phi_+(w) + \PFeps\phi_-(w) = \PFeps\phi_+(w)-|\phi_-(w_0)|/|JF^\epsilon(w_0)|\geq d_5- \frac{D}{2\,d_4\,n}$ and
\begin{displaymath}
(III) \leq \frac{\frac{D}{2\,d_4}}{n- \frac{D}{2\,d_4}}\,(C^\prime\,\beta+2\,c) <b
\end{displaymath}
for small enough $\epsilon$.\qquad$\Box$\newline
Now using the claim and~(\ref{eq.pf2}),
\begin{displaymath}
\frac{\PFeps\psi(z)}{\PFeps\psi(w)} \leq e^{(C^\prime\,\beta+c)\,\beta^{s^\epsilon(z,w)}}\,\left( 1+ b\,\beta^{s^\epsilon(z,w)}\right)
\leq e^{(C^\prime\,\beta+c)\,\beta^{s^\epsilon(z,w)}}\,e^{b\,\beta^{s^\epsilon(z,w)}}=e^{C^\prime\,\beta^{s^\epsilon(z,w)}};
\end{displaymath}
that is, (C1.b) is satisfied.\newline
\smallskip
{\em Control of $\|\Leps\psi\|_{L^1}$:\/} From the regularity estimates above, when $\epsilon$ is  sufficiently small $(\Leps\psi)|_{\Deps}<0$ and $(\Leps\psi)|_{\Depss}>0$. Since $\Leps$ is a Markov operator, it preserves integrals, so $\int\Leps\psi\,d\nu^\epsilon=0$ and
\begin{displaymath}
\|(\Leps\psi)|_{\Deps}\|_{L^1}=-\int_{\Deps}\Leps\psi\,d\nu^\epsilon=\int_{\Depss}\Leps\psi\,d\nu^\epsilon =\|(\Leps\psi)|_{\Depss}\|_{L^1}
\end{displaymath}
and all terms in the above expression are equal to $\frac{1}{2}\|\Leps\psi\|$. Now,
\begin{eqnarray*}
\|\psi|_{\Deps}\|_{L^1}-\|(\Leps\psi)|_{\Deps}\|_{L^1}&=&\text{total mass exchanged between $\Deps$ and $\Depss$}\\
&=&\int_{\Deps_{n-1,n}}\,|\psi| + \int_{\Depss_{0,n+1}}|\psi|\\
&\leq& |a_{n-1}|\,\epsilon_2 + \epsilon_1\,G\qquad\text{(by the argument bounding (II) above)}\\
&\leq& |a_0|\,D\,\epsilon_2 + \epsilon_1\,G\qquad\text{(by Lemma~\ref{lem.3})}\\
&\leq& \frac{D}{2\epsilon_1}\epsilon_2 + \epsilon_1\,G\qquad\text{(by Lemma~\ref{lem.3})}\\
&\leq& \frac{1}{2\,n}\,(D\,d_3+2\,d_1\,(\epsilon_1)^{1-\alpha}G)
\end{eqnarray*}
(in view of (\ref{e.tail}) and (\ref{e.small})). Putting $g:=D\,d_3+2\,d_1G$ we have
\begin{displaymath}
\|\Leps\psi\|_{L^1} = 2\,\|(\Leps\psi)|_{\Deps}\|_{L^1}
=1-2\,\left(\|\psi|_{\Deps}\|_{L^1}-\|(\Leps\psi)|_{\Deps}\|_{L^1}\right)\geq 1 - \frac{g}{n}.
\end{displaymath}
{\em Control of growth up $\Depss$:\/} The fact that $(\tLeps\psi)|_{\Depss_{0}}\leq G$ follows from (G), since $\|\tLeps\psi\|_{L^1}=1$. Higher up the tower,
\begin{displaymath}
(\tLeps\psi)|_{\Depss_\ell} = \frac{1}{\|\Leps\psi\|_{L^1}}\,(\PFeps\psi)|_{\Depss_\ell}
\leq (1-g/n)^{-1}(\PFeps\psi)|_{\Depss_\ell}\leq (1-g/n)^{-1}\,\max_{\Depss_{\ell-1}}\psi,
\end{displaymath}
since $\PFeps$ acts on $\Depss$ by upwards translation. Since $\max_{\Depss_{\ell-1}}\psi\leq (1-g/n)^{-(\ell-1)}\,G$, the growth condition (C1.c) is satisfied. This completes the proof that $\tLeps\cone^{\epsilon,G,g}\subset \cone^{\epsilon,G,g}$.\smallskip

Finally, the Schauder Fixed Point Theorem in conjunction with Lemma~\ref{lem.4} gives the existence of a fixed point $\psi^\epsilon$ of $\tLeps$ in $\cone^{\epsilon,G,g}$. Evidently, $\Leps\psi^\epsilon = \|\Leps\psi^\epsilon\|_{L^1}\,\psi^\epsilon$, so $\psi^\epsilon$ is an eigenvector for $\Leps$ and $\lambda^\epsilon=\|\Leps\psi^\epsilon\|_{L^1}$. The upper bound on $\lambda^\epsilon$ is obtained via the estimate $(I)$ and~(\ref{eq.pf3}):
\begin{displaymath}
\lambda^\epsilon\,a_0 = (\Leps\psi^\epsilon)|_{\Deps_0} = \frac{\int_{\Deps_0}\psi^\epsilon\,d\nu^\epsilon}{\nu^\epsilon(\Depss_0)}
=\frac{1}{\epsilon_1}\,((I) + (II)) \geq \frac{(I)}{\epsilon_1}=(1-\epsilon_2/\epsilon_1)\,a_0
\end{displaymath}
(since $(II)\geq 0$); divide by $a_0<0$. An immediate lower bound on $\lambda_\epsilon$ is given by $(1-g/n)$, however a more careful estimate using
\begin{displaymath}
(\Lambda(\epsilon))^{-1}\,|a_0|\geq |a_1| = |\psi^\epsilon|_{\Deps_{1,n}}| = (\lambda^\epsilon)^{-1}|(\Leps\psi^\epsilon)|_{\Deps_{1,n}}|
= (\lambda^\epsilon)^{-1}|\psi^\epsilon|_{\Deps_{0,n}}| = (\lambda^\epsilon)^{-1}|a_0|
\end{displaymath}
gives the lower bound $\lambda^\epsilon\geq \Lambda(\epsilon)$.
\end{proof}

Simplifications to the proof of Theorem~\ref{th.4} can be made to establish the existence of a fixed point $\varphi^\epsilon$ of $\Leps$ with the properties
\begin{itemize}
\item $\varphi^\epsilon>0$ a.e. and $\int_{\Delta^\epsilon} \varphi^\epsilon\,d\nu^\epsilon =1$
\item $\varphi^\epsilon|_{\Deps}=\sum_{\ell=0}^{n-1}c_\ell\chi_{\Deps_\ell}$ ($c_\ell$ are constants depending on $\epsilon$)
\item $\frac{\varphi^\epsilon(z)}{\varphi^\epsilon(w)}\leq e^{C'\beta^{s^\epsilon(z,w)}}$ for almost every $z,w\in\Depss_\ell$ (where $C'\geq c/(1-\beta)$)
\end{itemize}
Since $\Leps\varphi^\epsilon=\varphi^\epsilon$, all the $c_\ell$ are equal to a single constant $c^\epsilon$ and\footnote{By balancing the mass exchange between $\Deps$ and $\Depss$.} $c^\epsilon\asym \frac{\epsilon_1}{\epsilon_2}$. Hence $\int_{\Deps}\varphi^\epsilon\,d\nu^\epsilon = c^\epsilon\,(\epsilon_1+(n-1)\,\epsilon_2)\asym\epsilon^{1-\alpha}$. If $\varphi^\epsilon|_{\Depss}$ is embedded as a function on $\Delta^\epsilon$ then Lemma~\ref{lem.1} can be applied (as in Theorem~\ref{th.2}) to establish convergence to the density of the unique ACIM on $\Delta$. Explicitly, let $\varphi\in L^1(\Delta,\nu)$
be the density of the (unique) ACIM for $F$. Then
\begin{equation}\label{e.towercgc}
\varphi^\epsilon|_{\Depss}\stackrel{L^1}{\rightarrow}\varphi\qquad\mbox{and both}\qquad
\|\varphi^\epsilon|_{\Deps}\|_{L^1(\Delta^\epsilon)}, \|\varphi|_{\Delta\setminus\Depss}\|_{L^1(\Delta)}\to 0
\end{equation}
as $\epsilon\to 0$.

\subsection{Realisation of the second eigenfunction for the PM map}

When $\Delta$ arises as the first return tower to $[x_0,1]$ for a PM map $T$, the preimage of the hole $H_n$ is $H_n^1\equiv [x_0,\gamma_n)$ (for some $\gamma_n$ such that $T(\gamma_n)=x_{n-1}$---the $(n-1)$st preimage of $x_0$). There is also $\gamma_{n+1}$ such that $T(\gamma_{n+1})=x_n$ and $\gamma_n-\gamma_{n+1}=\epsilon_2$ while $\gamma_n-x_0=\epsilon_1$. The map $\iota$ translates $[x_0,\gamma_n)$ back to $[0,\epsilon_1)$ with $\iota(x)=x-x_0$ (instead of applying $T$) and the dynamics of $F^\epsilon$ essentially replaces $T$ with a weakly expanding map on a small interval $[0,\epsilon_0)$. To make this more precise, let $T[x_0,\gamma_n)=[0,x_{n-1})$ and put $\epsilon_0=x_{n-1}$ (note that $\epsilon_0\approx JT(x_0)\,\epsilon_1\asym \epsilon$).
Now define $\Phi^\epsilon:\Delta^\epsilon\rightarrow [0,1]$ by
\begin{displaymath}
\Phi^\epsilon(x,\ell) = \left\{\begin{array}{ll} T^\ell(x)&(x,\ell)\in\Depss\\
T^{\ell+1}(\iota^{-1}(x))&(x,\ell)\in\Deps\end{array}\right..
\end{displaymath}
Note that $\Phi^\epsilon(\Deps_{\ell,n})=[x_{n-\ell},x_{n-\ell-1})$, $\Phi^\epsilon(\Depss_{\ell,i})=[x_{i-\ell},x_{i-1-\ell})$ (for $\ell>0$) and $\Phi^\epsilon:\Deps_0\rightarrow [0,\epsilon_0)$ is $1$--$1$ and given by $T(x+x_0)$.
By defining
\begin{equation}\label{e.defTeps}
T^\epsilon(x)=\left\{\begin{array}{ll} \Phi^\epsilon\circ F^\epsilon\circ(\Phi^\epsilon)^{-1}(x)&x\in[0,x_n),\\
T(x)&x\geq x_n,\end{array}\right.
\end{equation}
$T^\epsilon$ is continuous (in particular $T^\epsilon(x_n)=x_{n-1}=T(x_n)$), $C^2$ near $0$, is a small perturbation of $T$ and satisfies $(T^\epsilon)^\prime(0)=S_0:=\frac{\epsilon_1}{\epsilon_1-\epsilon_2}$. Moreover, $T^\epsilon\circ\Phi^\epsilon=\Phi^\epsilon\circ F^\epsilon$, so $T^\epsilon$ arises as a factor of $F^\epsilon$.

{\em Note:\/} Define $\Phi^*:\Delta\rightarrow [0,1]$ by $\lim_{\epsilon\to 0}\Phi^\epsilon(x)$. This limit makes sense because if $x\in \Delta_{\ell,k}$ then $x\in\Depss$ whenever $\epsilon$ is small enough that $n(\epsilon)>k$. For all such $\epsilon$, $\Phi^\epsilon(x)=T^\ell(x)$. In particular, $\Phi^*|_{\Depss}=\Phi^\epsilon|_{\Depss}$ for all $\epsilon$ and $\Phi^*\circ F = T\circ\Phi^*$.

The eigenfunction $\psi^\epsilon$ from Theorem~\ref{th.4} does not push down to an eigenfunction of the Perron-Frobenius operator for $T$ (or $T^\epsilon$); a small random perturbation is needed. Put
\begin{equation}\label{e.smallpert2}
z_{k+1} = \left\{\begin{array}{ll} \xi_{k+1} &\mbox{if $T(z_k)\in[0,\epsilon_0)$}\\ T(z_k)&\mbox{otherwise},\end{array}\right.
\end{equation}
where the $\xi_k$ are i.i.d. random variables on $[0,\epsilon_0)$ with density function $\rho^\epsilon(z)=\frac{1}{\epsilon_1\,JT(T_{right}^{-1}(z))}$ where $T_{right}^{-1}:[x_0,x_0+\epsilon_1)\to [0,\epsilon_0)$. Notice that $\rho^\epsilon$ is the push-forward of the uniform density on $T_{right}^{-1}[0,\epsilon_0)$, and is close to constant when $\epsilon$ is small (since $T$ is $C^2$ on the right-hand branch). Let $\p_T$ be the Perron-Frobenius operator for $T$ and let $\PTeps$ be the (Markov) transfer operator associated with the process in (\ref{e.smallpert2})\footnote{The small amount of averaging over $[0,\epsilon_0)$ means that $\LTeps$ interpolates between $\p_T$, its Ulam approximations and the two-state metastable model.}.  Let $\Peps:=\p_{\Phi^\epsilon}:L^1(\Delta^\epsilon,\nu^\epsilon)\to L^1([0,1],\leb)$ be the Perron-Frobenius operator for $\Phi^\epsilon$, and introduce an alternate notation for $\Leps$: $\LFeps=\Leps (= \ADeps\circ\PFeps)$.

\begin{lemma}\label{l.semiconjP}
In the notation established above, $\Peps\circ\LFeps=\PTeps\circ\Peps$.
\end{lemma}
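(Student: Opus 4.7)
The plan is to verify the identity by duality against test functions $g\in L^\infty([0,1])$. Applying the defining adjoint $\int(g\circ\Phi^\epsilon)\,h\,d\nu^\epsilon=\int g\cdot\Peps h\,d\leb$ on both sides, together with the fact that $\ADeps$ is self-adjoint with respect to $\nu^\epsilon$ (it is a conditional-expectation-type projection onto functions constant on $\Deps_0$) and that the dual of $\PFeps$ is $f\mapsto f\circ F^\epsilon$, the claim $\Peps\circ\LFeps=\PTeps\circ\Peps$ reduces to the pointwise identity
\[
 \ADeps(g\circ\Phi^\epsilon)\circ F^\epsilon \;=\; (U^\epsilon g)\circ\Phi^\epsilon\qquad\nu^\epsilon\text{-a.e. on }\Delta^\epsilon,
\]
where $U^\epsilon$ is the Markov kernel dual to $\PTeps$: $U^\epsilon g(x)=g(T(x))$ when $T(x)\notin[0,\epsilon_0)$, and $U^\epsilon g(x)=\int_0^{\epsilon_0}g\,\rho^\epsilon\,d\leb$ when $T(x)\in[0,\epsilon_0)$.

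The core computation is to identify how the uniform averaging on $\Deps_0$ pushes down under $\Phi^\epsilon$. By construction, $\Phi^\epsilon|_{\Deps_0}\colon(x,0)\mapsto T(x+x_0)$ is a bijection onto $[0,\epsilon_0)$ with Jacobian $JT(T_{right}^{-1}(u))$, so the push-forward of $\nu^\epsilon|_{\Deps_0}$ has density $1/JT(T_{right}^{-1}(u))=\epsilon_1\,\rho^\epsilon(u)$; since $\nu^\epsilon(\Deps_0)=\epsilon_1$, the $\Deps_0$-average of $g\circ\Phi^\epsilon$ equals precisely $\int_0^{\epsilon_0} g\,\rho^\epsilon\,d\leb$, exactly the value that $U^\epsilon g$ takes in its averaging case.

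It then remains to match the two operators pointwise. First, the averaging triggers coincide: $F^\epsilon(z)\in\Deps_0$ iff $T(\Phi^\epsilon(z))\in[0,\epsilon_0)$. By definition $(F^\epsilon)^{-1}\Deps_0=\Deps_{0,1}\cup\Depss_{0,n+1}$, and the explicit images $\Phi^\epsilon(\Deps_{0,1})=[0,x_n)$ and $\Phi^\epsilon(\Depss_{0,n+1})=(x_0,\gamma_n)$ together exhaust $T^{-1}[0,\epsilon_0)=[0,x_n)\cup(x_0,\gamma_n)$; the converse inclusion follows by noting that every other piece---namely $\Depss_{0,i}$ with $i\neq n+1$, $\Depss_{\ell,i}$ with $\ell\geq 1$, and $\Deps_{\ell,n}$ with $\ell\geq 1$---has $\Phi^\epsilon$-image contained in $[\epsilon_0,1]\setminus T^{-1}[0,\epsilon_0)$, using the tower formula $\Phi^\epsilon(\Depss_{\ell,i})=(x_{i-\ell},x_{i-\ell-1})\subset(\epsilon_0,x_0)$ valid for $1\le\ell<i\le n$, and similarly on the $\Deps$-side. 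On the complement of the averaging zone, the semiconjugacy $\Phi^\epsilon\circ F^\epsilon=T^\epsilon\circ\Phi^\epsilon$ combined with $T(\Phi^\epsilon(z))\notin[0,\epsilon_0)$ forces $\Phi^\epsilon(z)\in[x_n,1]$, where $T^\epsilon=T$ by~(\ref{e.defTeps}); hence $\Phi^\epsilon(F^\epsilon(z))=T^\epsilon(\Phi^\epsilon(z))=T(\Phi^\epsilon(z))$, which closes the argument. The main obstacle is the book-keeping in this case analysis; once the tower decomposition is aligned with the $T$-preimages of $[0,\epsilon_0)$, the verification is mechanical.
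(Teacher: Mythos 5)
Your proposal is correct, and it is essentially the adjoint of the paper's argument, executed on the Koopman side. The paper stays with transfer operators: it introduces the interval averaging operator $\Aeps$, observes $\PTeps=\Aeps\circ\p_T=\Aeps\circ\p_{T^\epsilon}$ (using that $T(x)\in[0,\epsilon_0)$ iff $T^\epsilon(x)\in[0,\epsilon_0)$), proves the intertwining $\Peps\circ\ADeps=\Aeps\circ\Peps$ by splitting a density according to whether it is supported on $\Deps_0$ or on its complement (the first case resting on $\rho^\epsilon=\Peps\chi_{\Deps_0}/\epsilon_1$, the second on $\Phi^\epsilon(\Delta^\epsilon\setminus\Deps_0)=(\epsilon_0,1)$), and then chains with the semiconjugacy relation $\Peps\circ\PFeps=\p_{T^\epsilon}\circ\Peps$. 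Your duality reduction to the pointwise identity $\ADeps(g\circ\Phi^\epsilon)\circ F^\epsilon=(U^\epsilon g)\circ\Phi^\epsilon$ is exactly the dual of this: your push-forward computation on $\Deps_0$ is the dual form of $\rho^\epsilon=\Peps\chi_{\Deps_0}/\epsilon_1$, and your ``averaging triggers coincide'' step replaces the paper's one-line use of the two facts just quoted. What your route buys is explicitness (everything is verified pointwise on named tower pieces), at the cost of more book-keeping; the paper's version is shorter because the support and image observations do that combinatorics implicitly. One small slip in your case list: the base piece $\Deps_{0,n}=[\epsilon_1-\epsilon_2,\epsilon_1)$ is neither a trigger nor covered by ``$\Deps_{\ell,n}$ with $\ell\ge 1$'', and its image $\Phi^\epsilon(\Deps_{0,n})=[x_n,x_{n-1})$ is \emph{not} contained in $[\epsilon_0,1]$, so your stated reason does not apply to it; however $T[x_n,x_{n-1})=[x_{n-1},x_{n-2})$ still misses $[0,\epsilon_0)$, and $[x_n,x_{n-1})\subset[x_n,1]$ so your $T^\epsilon=T$ step covers it as well — the omission is harmless and repaired by one extra line.
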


\begin{proof}
Let $\PFeps$ be the Perron-Frobenius operator for the system $(F^\epsilon,\Delta^\epsilon)$, let
$\ADeps$ be given by~(\ref{e.piepsilon}) and define $\Aeps :L^1[0,1]\circlearrowleft$ by
\begin{displaymath}
  \Aeps f(z) = \left\{\begin{array}{ll}\rho^\epsilon(z)\,\int_{0}^{\epsilon_0} f\,d(\leb)&z\in[0,\epsilon_0)\\
f(z)&\mbox{otherwise}.\end{array}\right.
\end{displaymath}
Then $\PTeps=\Aeps\circ\p_T$. From~(\ref{e.defTeps}),  $T(x)\in [0,\epsilon_0)$ if and only if $T^\epsilon(x)\in[0,\epsilon_0)$, so in fact $\PTeps=\Aeps\circ\p_T=\Aeps\circ\p_{T^\epsilon}$.

Next, observe that
\begin{displaymath}
\rho^\epsilon = \p_{\Phi^\epsilon}\frac{\chi_{\Deps_0}}{\epsilon_1}=\Peps\frac{\chi_{\Deps_0}}{\epsilon_1}.
\end{displaymath}
Now suppose that $\psi$ is supported on $\Deps_0$. Then, by~(\ref{e.piepsilon}),
\begin{displaymath}
\Peps\circ \ADeps\,\psi = \Peps\chi_{\Deps_0}\,\frac{\int_{\Deps_0}\psi\,d\nu^\epsilon}{\epsilon_1}
=\rho^\epsilon\,\int_{\Depss_0}\psi\,d\nu^\epsilon =\rho^\epsilon\,\int_{[0,\epsilon_0)}\Peps\,\psi\,d(\leb)
=\Aeps\circ\Peps\,\psi.
\end{displaymath}
If $\psi$ is supported on $\Delta^\epsilon\setminus\Deps_0$ then $\ADeps\psi=\psi$ so $\Peps\circ \ADeps\,\psi =\Peps\,\psi$. Moreover, since $\Phi^\epsilon(\Delta^\epsilon\setminus\Deps_0)=(\epsilon_0,1)$, $\Aeps\circ \Peps\,\psi =\Peps\,\psi$. From these two cases
\begin{displaymath}
\Peps\circ \ADeps=\Aeps\circ \Peps
\end{displaymath}
and hence
\begin{displaymath}
\Peps\circ\LFeps=\Peps\circ \ADeps\circ\PFeps=\Aeps\circ \Peps\circ\PFeps
=\Aeps\circ \p_{T^\epsilon}\circ\Peps=\PTeps\circ\Peps
\end{displaymath}
(the third equality is because $\Phi^\epsilon\circ F^\epsilon=T^\epsilon\circ\Phi^\epsilon$).
\end{proof}

\begin{theorem}
\label{th.5}\
\begin{enumerate}
	\item For any $f\in L^1([0,1],\leb)$, $\PTeps f\stackrel{L^1}{\to}\p_Tf$ as $\epsilon\to 0$,
\item $\PTeps$ has an eigenvector $f^{\epsilon}$ satisfying
$\PTeps f^{\epsilon}=f^{\epsilon}$, and $f^{\epsilon}\stackrel{L^1}{\rightarrow} f^*$, where $f^*$ is the density of the unique ACIM for $T$.
\item $\PTeps$ has an eigenvector $h^{\epsilon}$ satisfying $\PTeps\,h^\epsilon=\lambda^\epsilon\,h^{\epsilon}$
(where $\lambda^\epsilon$ is as in Theorem~\ref{th.4}), and $[h^{\epsilon}]^+\stackrel{L^1}{\rightarrow}\frac{1}{2}f^*$
as $\epsilon\to 0$.
\end{enumerate}
\end{theorem}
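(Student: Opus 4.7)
The plan uses Lemma~\ref{l.semiconjP} as the bridge between operators on the tower and those on the interval. For part (1), I would observe that $\PTeps=\Aeps\circ\p_T$ (established inside the proof of that lemma); since for any $g\in L^1$, $\Aeps g-g$ is supported on $[0,\epsilon_0)$ with $L^1$-norm at most $2\|g|_{[0,\epsilon_0)}\|_{L^1}\to 0$, applying this to $g=\p_T f$ gives (1). For part (2), I would set $f^\epsilon:=\Peps\varphi^\epsilon$ where $\varphi^\epsilon$ is the positive fixed point of $\LFeps$ constructed in the remarks following Theorem~\ref{th.4}; Lemma~\ref{l.semiconjP} then gives $\PTeps f^\epsilon=\Peps\LFeps\varphi^\epsilon=f^\epsilon$. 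Using the identity $\Phi^\epsilon=\Phi^*$ on $\Depss$, the convergences in~(\ref{e.towercgc}), and $L^1$-contractivity of the transfer operators, I obtain $f^\epsilon\to\p_{\Phi^*}\varphi=f^*$ in $L^1[0,1]$ (the identification $\p_{\Phi^*}\varphi=f^*$ uses that $\Phi^*$ semiconjugates $F$ to $T$ together with uniqueness of the ACIM for $T$).

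For part (3), I would set $h^\epsilon:=\Peps\psi^\epsilon$ with $\psi^\epsilon$ from Theorem~\ref{th.4}, so that $\PTeps h^\epsilon=\lambda^\epsilon h^\epsilon$ by Lemma~\ref{l.semiconjP}. Writing $\phi_\pm=\psi^\epsilon|_{\Delta^{\epsilon,\pm}}$, the argument splits into two convergence claims which, combined via a region decomposition of $[0,1]$, yield the $L^1$-convergence of $[h^\epsilon]^+$.

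The first claim is $2\phi_+\stackrel{L^1}{\to}\varphi$ in $L^1(\Delta,\nu)$. After extending $\phi_+$ by zero on $H_n$, the growth condition~(C1.c) together with a uniform bound on $(1-g/n)^{-n}$ (analogous to~(\ref{eq.lambda_n})) places $\{2\phi_+\}$ inside a compact set $\cone_*^B$ by Lemma~\ref{lem.1}. Any subsequential $L^1$-limit $\tilde\varphi$ has $\|\tilde\varphi\|_{L^1}=1$; to identify it I pass to the limit in $\LFeps\psi^\epsilon=\lambda^\epsilon\psi^\epsilon$, using $\psi^\epsilon(x,\ell)=(\lambda^\epsilon)^{-\ell}\psi^\epsilon(x,0)$ on upper levels of $\Depss$, $\lambda^\epsilon\to 1$, and the estimate from the proof of Theorem~\ref{th.4} that the $\phi_-$-contribution to $(\PFeps\psi^\epsilon)|_{\Depss_0}$ has $L^1$-norm of order $1/n$. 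Uniqueness of the ACIM then forces $\tilde\varphi=\varphi$, so the whole family converges, and since $\Peps|_{\Depss}=\p_{\Phi^*}|_{\Depss}$ is $L^1$-continuous, $\Peps\phi_+\to\tfrac12 f^*$ in $L^1[0,1]$. The second claim is that $\Peps\phi_-$ concentrates at $0$: for any $\delta>0$, $\|\Peps\phi_-|_{[\delta,x_0)}\|_{L^1}\to 0$. Since $\Phi^\epsilon(\Deps_{\ell,n})=[x_{n-\ell},x_{n-\ell-1})$ and $x_k\asym k^{-1/\alpha}$, only $K(\delta)$-many upper levels contribute to $[\delta,x_0)$, and by Lemma~\ref{lem.3} together with $\epsilon_2\asym\epsilon_1/n$ each such level contributes mass $|a_\ell|\epsilon_2\asym 1/n$, giving total $O(K(\delta)/n)$.

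To finish, I would split $[0,1]=[x_0,1]\cup[\delta,x_0)\cup[0,\delta)$ for fixed $\delta>0$. On $[x_0,1]$ only $\Depss_0$ contributes to $\Peps\psi^\epsilon$ (the rest of $\Delta^\epsilon$ maps into $[0,x_0)$ under $\Phi^\epsilon$), so $h^\epsilon|_{[x_0,1]}=\phi_+|_{\Depss_0}\geq 0$ and $[h^\epsilon]^+|_{[x_0,1]}\to\tfrac12 f^*|_{[x_0,1]}$ in $L^1$. On $[\delta,x_0)$ the two claims together give $h^\epsilon\to\tfrac12 f^*|_{[\delta,x_0)}\geq 0$ in $L^1$, so $[h^\epsilon]^+|_{[\delta,x_0)}\to\tfrac12 f^*|_{[\delta,x_0)}$. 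On $[0,\delta)$, the pointwise bound $[h^\epsilon]^+\leq\Peps\phi_+$ gives $\limsup_\epsilon\|[h^\epsilon]^+|_{[0,\delta)}\|_{L^1}\leq\tfrac12\mu_T([0,\delta))$, which vanishes as $\delta\to 0$. Letting $\epsilon\to 0$ for fixed $\delta$ and then $\delta\to 0$ produces the required $L^1$ convergence. The principal difficulty will be the identification in the first claim of part (3): passing to the limit in the eigenvalue equation on $\Depss_0$ while controlling the $\phi_-$-correction, so that subsequential limits of $2\phi_+$ are genuine fixed points of $\p_F$, is delicate and is precisely where the careful mass accounting of Theorem~\ref{th.4} does its work.
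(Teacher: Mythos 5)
Your proposal is correct, and for parts (1) and (2) it coincides with the paper's proof (the identity $\PTeps=\Aeps\circ\p_T$, then $f^\epsilon=\Peps\varphi^\epsilon$ together with Lemma~\ref{l.semiconjP}, the identity $\Phi^\epsilon|_{\Depss}=\Phi^*|_{\Depss}$ and~(\ref{e.towercgc})). For part (3) the skeleton is also the same --- set $h^\epsilon=\Peps\psi^\epsilon$, prove $\psi^\epsilon|_{\Depss}\to\frac12\varphi^*$ on the tower, push down to $k^\epsilon=\Peps(\psi^\epsilon|_{\Depss})\to\frac12 f^*$, and then show $\|[h^\epsilon]^+-k^\epsilon\|_{L^1}\to0$ using $[h^\epsilon]^+\le k^\epsilon$ --- but your execution of the last step genuinely differs. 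The paper chooses an $\epsilon$-dependent cutoff $\eta=\epsilon^\alpha$, bounds $\int_0^\eta k^\epsilon=\mathcal{O}(\eta^{1-\alpha})$ and $\int_\eta^1(k^\epsilon-h^\epsilon)=\mathcal{O}(\eta^{-\alpha}\epsilon^\alpha)$ by counting tower columns/levels, and then exploits $\int_0^1 h^\epsilon\,d(\leb)=0$ together with $\int_0^1 k^\epsilon\,d(\leb)=\frac12$ to force $\int[h^\epsilon]^+\ge\frac12-\mathcal{O}(\epsilon^{\alpha(1-\alpha)})$; this buys an explicit convergence rate. You instead fix $\delta>0$, use the same level-counting facts in the form of two localisation statements (only $\Depss_0$ projects onto $[x_0,1]$; the negative mass over $[\delta,x_0)$ is $\mathcal{O}(K(\delta)/n)$ by Lemma~\ref{lem.3} and $\epsilon_2\asym\epsilon_1/n$), and dispose of $[0,\delta)$ by domination $[h^\epsilon]^+\le k^\epsilon$ plus the tightness of the limit $\frac12 f^*$ near $0$; this is softer (no tuned cutoff, no mass-balance bookkeeping) but yields no rate. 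Your identification step for the first claim (passing to the limit in $\LFeps\psi^\epsilon=\lambda^\epsilon\psi^\epsilon$ on $\Depss$) is also slightly different from the sub-invariance argument of Theorem~\ref{th.2} that the paper invokes; it works, but note that besides the $\mathcal{O}(1/n)$ contribution of $\psi^\epsilon|_{\Deps}$ to $(\PFeps\psi^\epsilon)|_{\Depss_0}$ you must also observe that the restricted equation involves the conditional operator $\p_n$ rather than $\p_F$, the discrepancy being the mass $\int_{\Depss_{0,n+1}}\psi^\epsilon\le G'\,\nu(H_n^1)\to0$ leaking to $\Deps_0$; with that remark included, subsequential limits of $2\psi^\epsilon|_{\Depss}$ are indeed fixed points of $\p_F$ and uniqueness of the ACIM finishes the argument, exactly as the paper's terser appeal to ``arguments similar to Theorem~\ref{th.2}'' intends.
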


\begin{proof}\

\emph{1.} First, from the proof of Lemma~\ref{l.semiconjP}, $\PTeps=\Aeps\circ\p_T$ so for any $f\in L^1$,
$$\|\PTeps f-\p_Tf\|_{L^1} = \int_0^{\epsilon_0}|\PTeps f-\p_Tf|\,d(\leb)\leq 2\,\int_0^{\epsilon_0}|\p_Tf|\,d(\leb)\to0$$
as $\epsilon_0\to 0$ since $\p_Tf\in L^1([0,1],\leb)$.

\emph{2.} Next, let $\varphi^\epsilon\in L^1(\Delta^\epsilon,\nu^\epsilon)$ be the (unique, normalised) fixed point of $\LFeps$ and let $f^{\epsilon}=\Peps\varphi^\epsilon$. Let $\varphi^*\in L^1(\Delta,\nu)$ be the (unique) density of the ACIM for $(F,\Delta)$ and let $\Phi^*$ be the canonical semi-conjugacy from $\Delta$ onto $[0,1]$. Then $f^*:=\p_{\Phi^*}\varphi^*$ is the density of the unique ACIM for $T$, and by Lemma~\ref{l.semiconjP}
$f^\epsilon:=\Peps\varphi^\epsilon=\Peps\LFeps\varphi^\epsilon=\PTeps\Peps\varphi^\epsilon=\PTeps f^\epsilon$.
Since $\Phi^*|_{\Depss}=\Phi^\epsilon|_{\Depss}$,
\begin{eqnarray*}
\|f^*-f^{\epsilon}\|_{L^1[0,1]} &=& \left\|  \p_{\Phi^*}(\varphi^*|_{\Delta\setminus\Depss}) -\Peps (\varphi^\epsilon|_{\Deps})
+\Peps((\varphi^*-\varphi^\epsilon)|_{\Depss})\right\|_{L^1[0,1]}\\
&\leq& \|\p_{\Phi^*}(\varphi^*|_{\Delta\setminus\Depss})\|_{L^1[0,1]}
 +  \|\Peps(\varphi^{\epsilon}|_{\Deps})\|_{L^1[0,1]}
 +  \|\Peps(\varphi^*|_{\Depss}-\varphi^{\epsilon}|_{\Depss})\|_{L^1[0,1]}\\
 &=& \| \varphi^*|_{\Delta\setminus\Depss}\|_{L^1(\Delta)}
   + \|\varphi^{\epsilon}|_{\Deps}\|_{L^1(\Delta^\epsilon)}
   + \|\varphi^*|_{\Depss}-\varphi^{\epsilon}|_{\Depss}\|_{L^1(\Delta)}.
   \end{eqnarray*}
   All these terms approach $0$ as $\epsilon\to 0$ by~(\ref{e.towercgc}).

\emph{3.} Finally, we turn to the second eigenfunction.
   Let $g,G$ be such that Theorem~\ref{th.4}
   holds and let $\psi^{\epsilon}\in\cone^{\epsilon,G,g}$ be the fixed point of $\tLeps$.
   Put $h^\epsilon:=\Peps\psi^\epsilon$. Then, by Lemma~\ref{l.semiconjP} and Theorem~\ref{th.4},
   \begin{displaymath}
    \PTeps h^\epsilon = \PTeps\Peps\psi^\epsilon = \Peps\LFeps\psi^\epsilon = \lambda^\epsilon\,\Peps\psi^\epsilon=\lambda^\epsilon\,h^\epsilon.
   \end{displaymath}
     By arguments similar to Theorem~\ref{th.2}, $\psi^\epsilon|_{\Depss}\stackrel{L^1(\Delta)}{\rightarrow}\frac{1}{2}\varphi^*$ as $\epsilon\to 0$.
   Hence
   \begin{equation}\label{e.goodcgc}
   k^\epsilon:=\Peps(\psi^\epsilon|_{\Depss}) \stackrel{L^1[0,1]}{\rightarrow}\frac{1}{2}f^*\qquad\textnormal{as}\qquad\epsilon\to 0.
   \end{equation}
   We need to show that $[h^\epsilon]^+$ has this same limit. This is done by showing that $\|[h^\epsilon]^+-k^\epsilon\|_{L^1[0,1]}\to 0$ as $\epsilon\to0$. To this end, notice that since $\psi^\epsilon|_{\Deps}<0$, we have $h^\epsilon\leq k^\epsilon$ and    in particular $[h^{\epsilon}]^+:=\max\{h^\epsilon,0\}\leq k^\epsilon$.\newline
   {\em Claim:\/} $\lim_{\epsilon\to0}\int_0^1(k_\epsilon-[h_\epsilon]^+)\,d(\leb) =0$.\newline
   {\em Remainder of proof of 3., given claim:\/} Using~(\ref{e.goodcgc}) and the claim, let $\epsilon\to 0$ in
   $$\|f^*/2-[h^{\epsilon}]^+\|_{L^1} \leq \|f^*/2-k^\epsilon\|_{L^1}+\|k^\epsilon-[h^\epsilon]^+\|_{L^1}
   =\|f^*/2-k^\epsilon\|_{L^1} + \int_0^1(k^\epsilon-[h^\epsilon]^+)\,d(\leb).$$
{\em Proof of claim:\/} Fix $\eta=\epsilon^\alpha$ and choose $J$ such that $\eta\in [x_J,x_{J-1}]$. (Notice that $J=\mathcal{O}(\eta^{-\alpha})$.) Since $\psi^\epsilon\in\cone^{\epsilon,G,g}$, (C1.c) gives $\psi^\epsilon|_{\Depss}\leq G\,(1-g/n)^{-n}\leq G^\prime$ for some constant $G^\prime$ independent of $\epsilon$. Now $[0,\eta]\cap\mbox{supp}(k^\epsilon)=(\Phi^\epsilon)^{-1}[0,\eta)\cap \Depss\subset\cup_{j=J-1}^{n-1}(\Phi^\epsilon)^{-1}[x_j,x_{j-1})$ so
  \begin{displaymath}
   \int_0^\eta k^\epsilon\,d(\leb) \leq \sum_{j={J-1}}^{n-1} \int_{(\Phi^\epsilon)^{-1}[x_{j},x_{j-1})}\psi^\epsilon\,d\nu^\epsilon
   \leq G^\prime\,\sum_{\ell=1}^\infty\sum_{j=J}^\infty\nu(\Delta_{\ell,j+\ell})=\mathcal{O}(J^{1-1/\alpha})=\mathcal{O}(\eta^{1-\alpha})
  \end{displaymath}
by~(\ref{e.size}). On the other hand,
\begin{displaymath}
    -\int_\eta^1(h^{\epsilon}-k^\epsilon)\,d(\leb)=\int_{(\Phi^\epsilon)^{-1}[\eta,1)}-\psi^\epsilon|_{\Deps}\,d\nu^\epsilon
    \leq \sum_{\ell=n-J}^n |a_\ell|\,\nu^\epsilon(\Deps_{\ell,n})\leq \mathcal{O}(J\,\epsilon^\alpha)=\mathcal{O}(\eta^{-\alpha}\,\epsilon^\alpha).
\end{displaymath}
    Hence,
    \begin{eqnarray}
    \int_\eta^1 |h^{\epsilon}|\,d(\leb)\geq\int_\eta^1 h^{\epsilon}\,d(\leb) &=& \int_\eta^1 (h^{\epsilon}-k^\epsilon)\,d(\leb)
    +\int_0^1k^\epsilon\,d(\leb)
     - \int_0^\eta k^\epsilon\,d(\leb)\nonumber\\
     &\geq& -\mathcal{O}(\eta^{-\alpha}\,\epsilon^\alpha)+\frac{1}{2}-\mathcal{O}(\eta^{1-\alpha}).\label{e.extraestimate}\end{eqnarray}
     Then, since $\int_0^1h^\epsilon d(\leb)=\int_{\Delta^\epsilon}\psi^\epsilon\,d\nu^\epsilon=0$,
      $\int_0^\eta|h^{\epsilon}|\,d(\leb)\geq\int_0^\eta (-h^{\epsilon})\,d(\leb)=\int_\eta^1h^{\epsilon}\,d(\leb)$ has the same lower bound~(\ref{e.extraestimate}). Since $\eta=\epsilon^{\alpha}$,
      \begin{displaymath}
	      \int_0^1k^\epsilon\,d(\leb)\geq \int_0^1[h^{\epsilon}]^+\,d(\leb)=\frac{1}{2}\int_0^1|h^{\epsilon}|\,d(\leb) \geq \frac{1}{2} - \mathcal{O}(\epsilon^{\alpha\,(1-\alpha)})
	      =\int_0^1k^\epsilon\,d(\leb) - \mathcal{O}(\epsilon^{\alpha\,(1-\alpha)}).
      \end{displaymath}
  \end{proof}


\section{Numerics}\label{sec.numerics}

Ulam's method~\citep{Ulam60} is a well-known (and effective) method for studying~$T$ numerically via its Perron-Frobenius operator. For each $N\in\mathbb{N}$, partition $[0,1]$ into subintervals of length $\frac{1}{N}$ and let~$\mathcal{B}_N$ be the (finite) $\sigma$-algebra obtained by taking unions of elements of the partition; $\mathcal{B}_N$-measurable functions are piecewise constant. Let $\mathbb{E}_N$ denote the conditional expectation operator $\mathbb{E}(\cdot|\mathcal{B}_N)$ acting on function in $L^1(\leb)$. Ulam's method consists in replacing $\p$ with the finite-rank operator $\p_N=\mathbb{E}_N\circ\p$. The leading eigenvalue of $\p_N$ is $1$, and the corresponding eigenvector (fixed point of $\p_N$) is an approximation to a fixed point of~$\p$. Surprisingly (given the absence of a spectral gap for $\p$), these fixed points converge to $\frac{d\mu}{dx}$ as $N\rightarrow\infty$~\citep{Bla08,Mur09}.

Because $\mathbb{E}_N$ is a finite rank projection, each $\p_N$ can be represented by an $N\times N$ matrix~$P_N$. Each $P_N$ is extremely sparse (having $\mathcal{O}(N)$ non-zero entries), and their eigenvalues can be found quickly by iterative methods. Because the dynamics of $T$ are transitive, each $P_N$ is ergodic, so the eigenvalue~$1$ has strictly larger modulus than the other eigenvalues. Interestingly, the {\em spectral gap\/} (separation of the largest modulus eigenvalue~$1$ from the next largest modulus eigenvalue) scales as $N^{-\alpha}$.

\subsection{Eigenvalue scaling}
The two-state model of Section \ref{sec.metastable} showed that when the geometric escape rate from the set $[0,\epsilon_0]$ approached zero more slowly than the escape rate from the set $[\epsilon_0,1]$, the gap from $1$ of the second eigenvalue of the two-state Markov chain scaled like the slower escape rate from $[0,\epsilon_0]$;  namely $\epsilon_0^\alpha$.

We now replace the two-state model with the ``$N$-state model'' $P_N$ arising from Ulam's method. The matrix $P_N$ is row-stochastic, representing the transitions of a finite state Markov chain whose $i$th state is identified with the subinterval $J_i:=[(i-1)/N,i/N)$. The indifferent fixed point at 0 can
be associated naturally with the subinterval $J_1=[0,1/N)\approx[0,\epsilon_0)$. The conditional transition probabilities out of state~$1$ are $(P_N)_{11}=\leb(J_1\cap T^{-1}J_1)/\leb(J_1)= 1-N\,T^{-1}(1/N)$ and $(P_N)_{12}=1-(P_N)_{11}$.  Thus the geometric rate of escape from $J_1$ is $-\log(P_N)_{11}\approx 1-(P_N)_{11} \asym N^{-\alpha}\asym \epsilon_0^\alpha$, and this is of the same order as previously computed for the two-state model.

We find numerically that despite increasing the number of states from two to $N$, the second eigenvalue of our $N$-state Ulam matrix retains the scaling predicted by the two-state model when $\epsilon_0=1/N$, namely $1-\lambda_2(N)\sim N^{-\alpha}$;  see Figure \ref{fig.2}.

\begin{figure}[htb]
\begin{center}
	\subfigure[]{\label{fig.2a}\includegraphics[height=6.5cm]{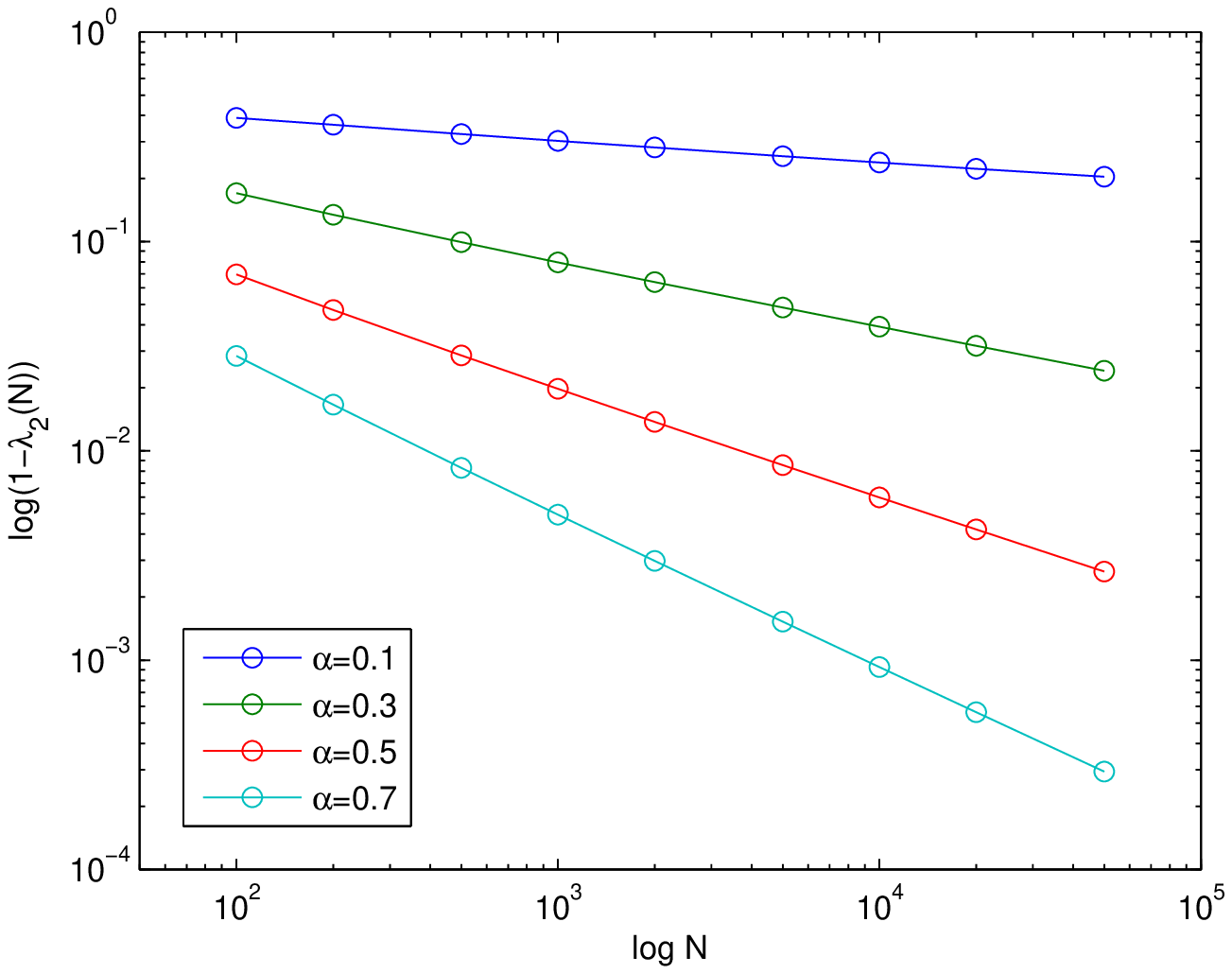}}\subfigure[]{\label{fig.2b}\includegraphics[height=6.5cm]{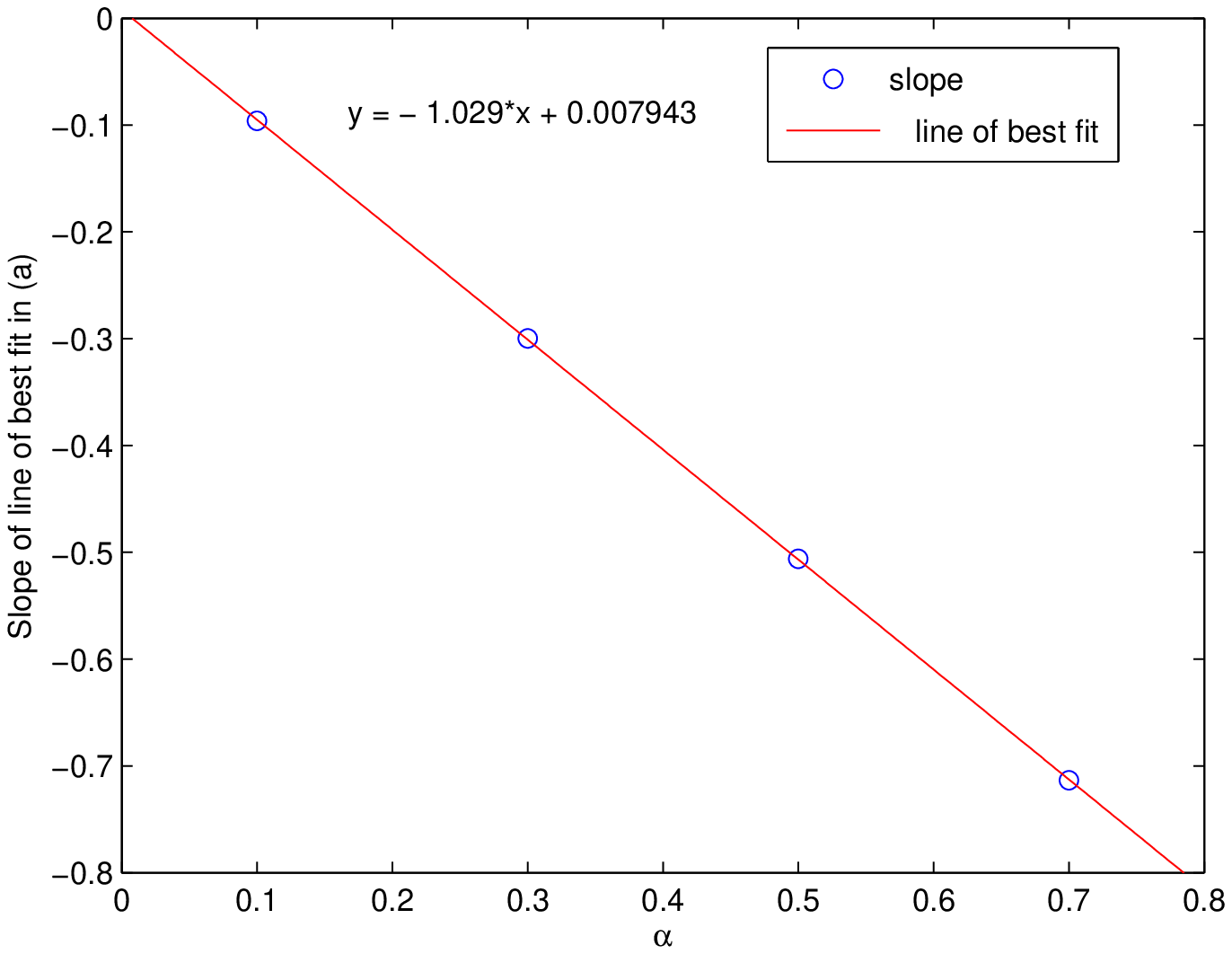}}
  \caption{(a) Variation of the second eigenvalue of $P_N$ with $N$ and $\alpha$. (b) Slope of line of best fit for each $\alpha$. Note: Computed by {\tt eigs\/} in {\sc Matlab}, with Ulam matrices for the PM map~\citep[Example~3]{Mur09}.}\label{fig.2}
\end{center}
\end{figure}

\paragraph{Connection with the Perron-Frobenius operators $\mathcal{L}^\epsilon_{F^\epsilon}$ and $\mathcal{L}^\epsilon_T$.}
Theorems ~\ref{th.4} and~\ref{th.5} prove the existence of a second eigenvalue\footnote{More precisely, we show that there is another real eigenvalue very close to 1;  based on numerical computations we conjecture that the eigenvalue $\lambda^\epsilon$ is indeed the second largest real eigenvalue.} $\lambda^\epsilon$.  The matrix $P_N$ successfully reproduces the dynamics responsible for this eigenvalue, and we now explicitly describe the connection. Set $\epsilon=1/(N\,T^\prime(x_0^+))$ and choose $n,\epsilon_1,\epsilon_2$ as in (\ref{e.size})--(\ref{e.small}). Then $T[x_0,x_0+\epsilon_1)=:[0,\epsilon_0)\approx [0,1/N)=J_1$. The averaging created by replacing $T$ by $T^\epsilon$ (see~(\ref{e.defTeps})) or the small random perturbation~(\ref{e.smallpert2}) ``almost linearises'' $T$ over $[0,1/N]$, so that $P_N$ is a very near approximation\footnote{Since $T$ is locally $C^2$ and $\epsilon_0\approx\frac{1}{N}$, the transition probabilities $(P_N)_{11}$ and $(P_N)_{12}$ are close to the corresponding entries of the matrix for $\Peps\circ \mathcal{L}_T^{\epsilon_0}$ (note that $\Peps$ --- like the perturbation defined by~(\ref{e.smallpert2}) --- {\em averages over a small interval $[0,\mathcal{O}(\epsilon))$\/}. Outside $[0,\epsilon_0]$, $T$ is uniformly expanding (albeit rather weakly) and $\mathcal{L}_T^{\epsilon_0}=\mathcal{P}_T$.} to~$\mathcal{L}_T^{\epsilon_0}$. Theorem~\ref{th.4} predicts an eigenvalue of $\mathcal{L}_T^{\epsilon_0}$, $\lambda^{\epsilon_0}\in(1-2\frac{\epsilon_2}{\epsilon_1},1-\frac{\epsilon_2}{\epsilon_1})$. Numerical computations with $P_N$ for a range of $N$ produce second eigenvalues within this range. In fact, the upper limit is a very good estimate; see Table~\ref{table.1}.

\begin{table}[htb]
\begin{center}
\begin{tabular}{|l|l|l|}
  \hline
  $N$&$1-\lambda_2(N)$&$\epsilon_2/\epsilon_1$\\
  \hline\hline
  100 & 0.069494728128226
 &  0.060750416292176\\
  200 & 0.047118990434159 & 0.042626262679704 \\
  500 & 0.028582682402957 & 0.026696029895732
 \\
  1000 & 0.019751285772241 & 0.018706181316717 \\
  2000 & 0.013727390048589 &  0.013165183357731\\
  5000 & 0.008542396305559
 & 0.008301674655368 \\
  10000 & 0.005988977377968
 & 0.005866565930472 \\
  20000 & 0.004208535921532 & 0.004150111773511 \\
  50000 & 0.002646628586393 & 0.002621525600809
 \\
  \hline
\end{tabular}
\caption{Comparison of $1-\lambda_2(N)$ computed numerically as a second eigenvalue of the $N\times N$ Ulam matrix and the corresponding lower bound $\epsilon_2/\epsilon_1$ obtained from Theorems~\ref{th.4} and~\ref{th.5} ($\alpha=0.5$).}\label{table.1}
\end{center}
\end{table}

\subsection{Ulam's method and the escape rate from $[1/N,1]$}

We conclude with some simple remarks about how to observe the ACCIMs $\mu_n$, and their geometric escape rates, numerically. The open tower systems constructed in Section~\ref{sec.firsteigenfunctions} exclude a set $H_n$ corresponding to those orbits which land in $[0,x_n)$ under~$T$. Letting $\Phi^*:\Delta\to [0,1]$ be defined by $\Phi^*(x,\ell)=T^\ell(x)$, the measure $\mu\circ(\Phi^*)^{-1}$ is an ACIM for $T$, and $\mu_n\circ(\Phi^*)^{-1}$ is an ACCIM for $T|_{[x_n,1]}$ with geometric escape rate~$1-\mu_n(H^1_n)$ (see Corollary~\ref{cor.2}). In fact, $\mu_n(H^1_n)\asym\epsilon$, where $\epsilon\asym x_n$ ($x_n$ is the $n$th left preimage of the discontinuity point $x_0$ for $T$). Hence, if $x_n\approx\frac{1}{N}$, then one expects the escape rate from $[x_n,1]$ to scale like $1/N$. Note that with choosing $\epsilon\asym 1/N$, the corresponding $n$ in the constructions of Sections~\ref{sec.firsteigenfunctions} and~\ref{sec.secondeigenfunctions} is $n\asym \epsilon^{-\alpha}\asym N^{\alpha}$. Now partition the $N\times N$ Ulam matrix~$P_N$ as
\begin{displaymath}
P_N=\left[\begin{array}{c|c} (P_N)_{11}&\mathbf{a}^T\\ \hline \mathbf{b}&P^o_N\end{array}\right]
\end{displaymath}
where $\mathbf{a},\mathbf{b}$ are $(N-1)$-vectors and $P^o_N$ is an $(N-1)\times(N-1)$ matrix. In fact, $P^o_N$ is the Ulam approximation to the conditional Perron-Frobenius operator $\chi_{[1/N,1]}\p(\,\cdot\,\chi_{[1/N,1]})$. In Figure~\ref{fig.3} we present numerical evidence that the leading eigenvalue $\lambda^o_1(N)$ for $P^o_N$ has the scaling $1-\lambda^o_1(N) \asym 1/N$, {\em independently of $\alpha$\/}.

\begin{figure}[htb]
\begin{center}
  \includegraphics[width=12cm]{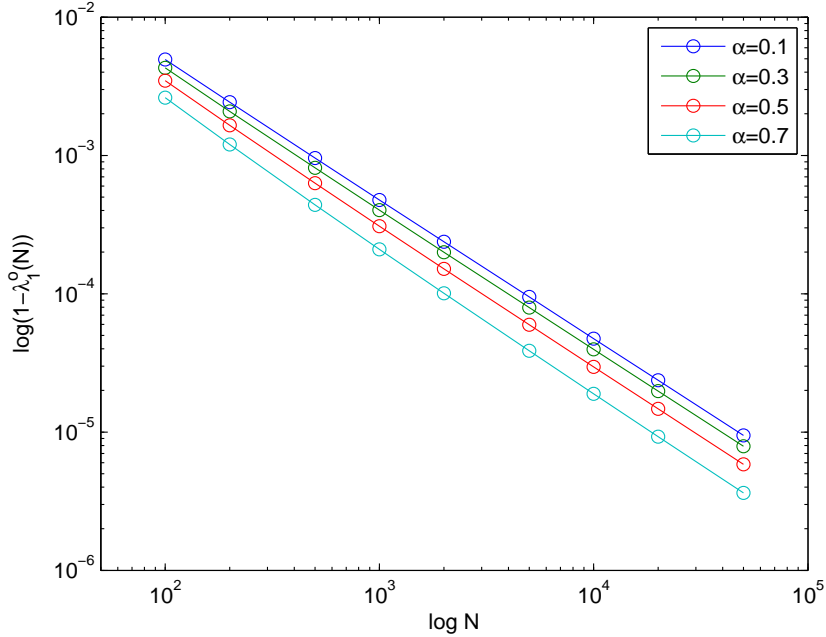}\\
  \caption{Variation of escape rate from $[1/N,1]$ with $N$ and $\alpha$.}\label{fig.3}
\end{center}
\end{figure}

Finally, Theorem~\ref{th.2} predicts the convergence of the ACCIMs $\mu_n$ to the ACCIM as the size of the hole~$H_n$ shrinks to $0$. We illustrate this convergence numerically as follows. For a large $N_*$ (we have used $N_*=10^5$), form $P_{N_*}$ and calculate the leading eigenvector. This is a good approximation to the density of the ACIM for $T$~\citep{Mur09}, and we use it as a reference measure. Next, for a sequence of smaller $N_k$ (we used the values from the first column of Table~\ref{table.1}), calculate the leading eigenvector of $P^o_{N_k}$. Comparing the probability measure induced by these eigenvectors with the reference measure from the Ulam aproximation $P_{N_*}$ we see good convergence in Figure~\ref{fig.4}.

\begin{figure}[htb]
\begin{center}
  \includegraphics[width=12cm]{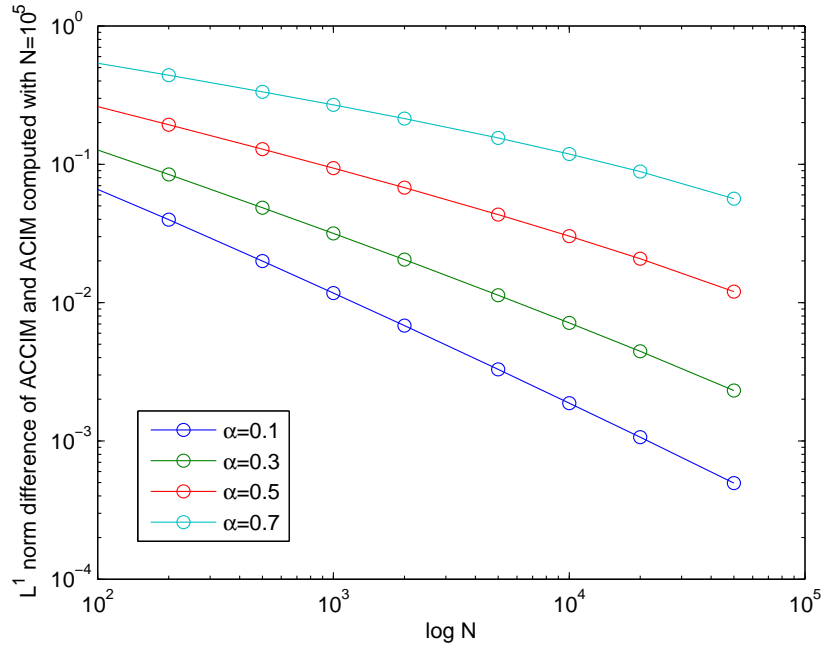}\\
  \caption{Total variation norm error between Ulam approximate ACCIMs with a hole $[0,1/N]$ and the Ulam approximate ACIM with a bin size of $10^{-5}$. A range of different PM maps are used, ($\alpha$ is order of tangency of the indifferent fixed point).}\label{fig.4}
\end{center}
\end{figure}

\section*{Acknowledgements}
GF and OS thank the University of Canterbury for its hospitality during a visit that initiated this work. GF is partially supported by the ARC Discovery Project DP110100068 and OS is supported by the ARC Centre of Excellence for Mathematics and Statistics of Complex Systems (MASCOS).
RM thanks the University of New South Wales for hospitality, allowing part of this work to be completed.


\end{document}